\documentclass[a4paper,11pt]{article}
\usepackage{amsmath,amssymb,amsthm,url,amsfonts}   
\usepackage{makeidx}
\usepackage{mathrsfs}  
\usepackage{mathtools}
\usepackage{bbm}
\usepackage{mathabx}    
\usepackage{enumerate}  
\usepackage[shortlabels]{enumitem}

\allowdisplaybreaks
{\theoremstyle{plain}
	\newtheorem{definition}{Definition}[section]
	\newtheorem{lemma}[definition]{Lemma}
	\newtheorem{theorem}[definition]{Theorem}
	\newtheorem{corollary}[definition]{Corollary}
	\newtheorem{proposition}[definition]{Proposition}
	
} {\theoremstyle{definition}
	\newtheorem{example}[definition]{Example}
	\newtheorem{remark}[definition]{Remark}
}

\renewcommand{\mathbb}{\mathbbm}                     
\renewcommand{\epsilon}{\varepsilon}                 
\renewcommand{\phi}{\varphi}

\renewcommand{\le}{\leqslant}
\renewcommand{\ge}{\geqslant}

\newcommand{\abs}[1]{\left\lvert #1 \right\rvert}    
\newcommand{\norm}[1]{\left\lVert #1 \right\rVert}   

\newcommand{\normm}[1]{{\left\vert\kern-0.25ex\left\vert\kern-0.25ex\left\vert #1 
		\right\vert\kern-0.25ex\right\vert\kern-0.25ex\right\vert}}                      
\newcommand{\normmm}[1]{{\vert\kern-0.25ex\vert\kern-0.25ex\vert #1 
		\vert\kern-0.25ex\vert\kern-0.25ex\vert}}

\DeclareMathOperator{\1}{\mathbbm 1}

\newcommand{\fa}{\qquad\text{for all }}

\DeclareMathOperator{\Borel}{{\mathfrak B}}

\renewcommand{\d}{\mathfrak d}

\newcommand{\F}{{\mathcal F}}

\newcommand{\I}{{\mathcal I}}

\renewcommand{\P}{{\mathcal P}}
\newcommand{\RR}{{\mathcal R}} 
\renewcommand{\S}{{\mathcal S}}

\renewcommand{\O}{{\mathcal O}}

\renewcommand{\I}{{\mathcal I}}

\renewcommand{\d}{{\mathrm d}}

\DeclareMathOperator{\R}{{\mathbb R}}                
\DeclareMathOperator{\Rp}{{\mathbb R}_+}             
\DeclareMathOperator{\N}{{\mathbb N}}                

\DeclareMathOperator*{\esssup}{ess\,sup}

\title{Stochastic integration \\with respect to 
a L\'evy basis}

\makeatletter
\renewcommand{\thefootnote}{\fnsymbol{footnote}} 
\makeatother

\providecommand{\keywords}
{
	\textbf{\textit{Keywords and phrases:}}
}
\providecommand{\amssubj}
{
	\textbf{\textit{AMS 2010 subject classification:}}
}

\author{%
Markus Riedle \\
Department of Mathematics\\ King's College London\\ London WC2R 2LS\\ United Kingdom\\
\texttt{markus.riedle@kcl.ac.uk}
}

\begin{document}
\maketitle

\renewcommand{\thefootnote}{\arabic{footnote}}
\setcounter{footnote}{0}

\begin{abstract}
	We develop a stochastic integration theory for predictable integrands with respect to a L\'evy basis. Our approach is based on decoupling inequalities for tangent sequences and reduces the construction of the stochastic integral essentially to the deterministic integration theory for infinitely divisible random measures developed by Rajput and Rosi\'nski. 
	
	We characterise the corresponding class of integrable predictable processes in terms of the semimartingale characteristics associated with the driving random measure and show that the resulting space of integrands possesses a natural Musielak--Orlicz type structure equipped with an \(F\)-norm. Furthermore, we establish continuity properties of the integral operator and a stochastic version of Lebesgue's dominated convergence theorem.
\end{abstract}

\begin{flushleft}
	\amssubj{Primary 60H05, 60G51; Secondary 60G57, 60H15, 46E30.}
	
	\keywords{L\'evy basis, infinitely divisible random measure, stochastic integration, decoupling inequality, tangent sequence, random measure.}
\end{flushleft}

\section{Introduction}

Stochastic partial differential equations can be studied either as infinite-di\-men\-sional stochastic evolution equations or within a space-time random field framework. In the latter approach, the driving noise is naturally described by random measures, and a suitable stochastic integration theory with respect to such measures becomes fundamental.

In the Gaussian setting, covariance structures are sufficient to characterise the law of the driving noise and lead naturally to Gaussian random fields and space-time white noise. In the non-Gaussian setting, however, covariance information is no longer sufficient to determine the distribution of the noise, and the random field approach naturally leads to a description of the driving noise in terms of random measures. Consequently, the study of stochastic partial differential equations driven by non-Gaussian noise requires a suitable stochastic integration theory with respect to random measures.

The general theory of stochastic integration with respect to random measures was developed by Bichteler and Jacod \cite{Bichteler-Jacod} following a stochastic Daniell approach to integration; see also the monograph by Bichteler \cite{Bichteler}. This approach is mathematically elegant and conceptually complete, but the resulting definition of the stochastic integral is rather implicit and often technically difficult to apply in concrete situations. This difficulty was addressed by Chong and Kl\"uppelberg \cite{Chong-Kluppelberg}, who, building on the work of Bichteler and Jacod, characterised the space of predictable integrands in terms of semimartingale characteristics associated with the underlying random measure. Their theory has already been successfully applied to stochastic partial differential equations driven by non-Gaussian noise; see, for example, Balan and Jim\'enez \cite{Balan-Jimenez} and Chong, Dalang, and Humeau \cite{Chong-Dalang-Humeau}. 

The purpose of the present article is to develop an alternative approach  for stochastic integration with respect to a L\'evy basis based on explicit topological and functional analytic structures. The central idea of our approach is that stochastic integration for predictable integrands can essentially be reduced to the deterministic integration theory by means of decoupling inequalities for tangent sequences. More precisely, once the stochastic integral is constructed for deterministic integrands, decoupling allows one to separate the randomness of the integrand from that of the integrator and thereby extend the integral to predictable integrands. In this way, the integration theory for predictable processes is largely reduced to the deterministic case. 

In this article,  we develop a stochastic integration theory for predictable integrands with respect to a L\'evy basis and characterise the corresponding largest space of admissible integrable processes in terms of the semimartingale characteristics associated with the driving noise. The resulting space of integrands is shown to possess a natural Musielak--Orlicz type structure equipped with an \(F\)-norm, while the corresponding stochastic integral process is studied in an \'Emery-type topology. In addition, we establish continuity properties of the integral operator together with a stochastic version of Lebesgue's dominated convergence theorem. One advantage of the present approach is that the underlying topological structures are explicit and based on standard functional analytic objects. This makes the stochastic integral technically easier to handle and facilitates the derivation of further structural properties. 

The decoupling approach to stochastic integration originated independently in the 1980s in two different contexts. McConnell \cite{McConnell} developed decoupling methods for stochastic integration in infinite-dimensional spaces, whereas Kwapie\'n and Woyczy\'nski \cite{Kwapien-Woyczynski-1991} used decoupling techniques to characterise admissible integrands for semimartingales. McConnell's approach was later systematically developed by van Neerven, Veraar, and Weis and led to powerful results for stochastic evolution equations driven by cylindrical Brownian motion; see the survey \cite{Neerven-summary}. The second line of development was further extended and presented in detail in the monograph by Kwapie\'n and Woyczy\'nski \cite{Kwapien-Woyczynksi}. 

The article is intended to be largely self-contained and contains detailed arguments throughout. For this reason, we also revisit the deterministic integration theory of Rajput and Rosi\'nski \cite{Rajput-Rosinski}, although our presentation contains several modifications inspired by arguments from \cite{Kwapien-Woyczynksi}. The treatment of predictable integrands follows ideas from our joint work with Bod\'o \cite{Bodo-Riedle-integration}, from which several arguments and aspects of the presentation are adapted. 

The article is organised as follows. In Section \ref{se.inf-divisible}, we briefly introduce independently scattered infinitely divisible random measures and their characteristics following \cite{Rajput-Rosinski}. Section \ref{se.integration-det-RM} develops the stochastic integration theory for deterministic integrands and identifies the corresponding space of integrands as a generalised Musielak--Orlicz space. In Section \ref{se.decoupling}, we introduce decoupled tangent sequences and the corresponding decoupling inequalities. This section is based on \cite{Kwapien-Woyczynksi}, but adapted and streamlined for our purposes. Section \ref{se.integration-predictable} and \ref{se.characterisation-integrands} develop the stochastic integral for predictable integrands and characterise the corresponding space of integrable predictable processes. In the final section, Section \ref{se.limit-theorems}, we establish continuity properties of the integral operator together with a stochastic version of Lebesgue's dominated convergence theorem. 

Our notation is mostly standard. If $(S,\mathcal{S})$ is a measurable space, $\mu$ is a $\sigma$-finite measure on $\mathcal{S}$, and $U$ is a separable topological vector space, then $L_\mu^0(S;U)$, or $L_\mu^0(S,\mathcal{S};U)$ when we wish to emphasise the $\sigma$-algebra on $S$, denotes the space of equivalence classes of measurable functions
$f\colon (S,\mathcal{S})\to (U,\Borel(U))$, where $\Borel(U)$ denotes the Borel $\sigma$-algebra on $U$. A probability space $(\Omega,\F,P)$ is fixed throughout.

\section{Infinitely divisible random measures}\label{se.inf-divisible}

In this section, we introduce random measures with infinitely divisible distributions. These objects are the foundation to define a Lévy-type noise in space and time. They will play a central role in the construction of stochastic models discussed in the next section.

A $\delta$-ring $\RR$ is a family of subsets of a non-empty set $R$ with $\emptyset \in \RR$ and which is closed under finite unions, closed under relative complementation, and closed under countable intersections. We say it includes a partition if there exists
an increasing sequence $(R_k)_{k\in\N}$ of sets $R_k$ in $\RR$ with $R=\bigcup_{k=1}^\infty R_k$. 
Any $\delta$-ring with $R\in \RR$ is a $\sigma$-algebra. 

\begin{definition}\label{de.random-measure-inf}
	Let $\RR$ be a $\delta$-ring with a partition on a non-empty set $R$. 
	\begin{enumerate}
		\item[{\rm (a)}] A  random measure on $\RR$  is a map $\Theta\colon \RR \to L_P^0(\Omega;\R)$ satisfying:
		\[ \Theta\left( \bigcup_{k=1}^\infty A_k\right)= \sum_{k=1}^\infty \Theta(A_k)
		\qquad \text{ $P$-a.s. } \]
		for any sequence $(A_k)_{k\in\N}\subseteq \RR$ of disjoint sets with $\bigcup_{k=1}^\infty A_k\in \RR$.
		\item[{\rm (b)}]  A random measure  $\Theta\colon \RR \to L_P^0(\Omega;\R)$ on $\RR$ is called  independently scattered if 
		$\Theta(A_1), \dots, \Theta(A_n)$ are independent for every disjoint sets $A_1,\dots, A_n\in\RR$.
		\item[{\rm (c)}] An independently scattered random measure   $\Theta\colon \RR \to L_P^0(\Omega;\R)$ is called
      infinitely divisible (resp.\ Poisson) if $\Theta(A)$ is infinitely divisible (resp.\ Poisson) for each $A\in \RR$;
	\end{enumerate}
\end{definition}

In the following, we will typically omit the qualifier independently scattered and simply refer to infinitely divisible random measures, with the understanding that all infinitely divisible random measures considered are independently scattered. 

The infinite divisibility property also guarantees that a random measure admits a L\'evy--Khintchine representation. To state it, we fix throughout this work the truncation function
\begin{align*}
	\tau\colon\R\to\R, \qquad \tau(\alpha)=\begin{cases}
		\alpha, &\text{if }\abs{\alpha}\le 1, \\[0.2em]
		\tfrac{\alpha}{\abs{\alpha}}, &\text{if }\abs{\alpha}>1.
	\end{cases}
\end{align*}
The following L\'evy--Khintchine formula is shown in \cite{Rajput-Rosinski}.
\begin{theorem}\label{th.IDRM-char}
	Let $\Theta\colon \RR \to L_P^0(\Omega;\R)$ be an infinitely divisible random measure. Then, for every $A\in\RR$, the characteristic function of $\Theta(A)$ is given by
	\begin{align*}
		\phi_{\Theta(A)} (\beta)
		=\exp\left(
		i\beta a_0(A)
		-\tfrac12 \beta^2 q_0(A)
		+ \int_{\R} \big(e^{i\beta\alpha} - 1 - i\beta \tau(\alpha)\big)\, \lambda_0(A,\d \alpha)
		\right),
	\end{align*}
	for every $\beta\in\R$, where $a_0\colon \sigma(\RR)\to \R$ is a signed measure,
	$q_0\colon \sigma(\RR)\to [0,\infty)$ is a measure, and
	$\lambda_0\colon \sigma(\RR)\otimes \Borel(\R)\to [0,\infty]$ is a $\sigma$-finite measure.
\end{theorem}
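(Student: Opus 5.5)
For each fixed $A\in\RR$, infinite divisibility of $\Theta(A)$ and the classical L\'evy--Khintchine theorem (with the truncation function $\tau$) give a unique triplet $(a_0(A),q_0(A),\lambda_0(A,\cdot))$ with $a_0(A)\in\R$, $q_0(A)\ge 0$, and $\lambda_0(A,\cdot)$ a L\'evy measure on $\R$. The plan is to show that these set functions are countably additive on $\RR$, extend them to $\sigma(\RR)$, and then glue $\lambda_0$ into a single $\sigma$-finite measure on the product $\sigma$-algebra.

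\textbf{Additivity on $\RR$.} Let $(A_k)\subseteq\RR$ be disjoint with $A=\bigcup_k A_k\in\RR$.
\begin{itemize}
\item Independent scattering and the random-measure property give
\[
\Theta(A)=\sum_k\Theta(A_k)
\]
almost surely, with independent summands, so $\phi_{\Theta(A)}=\prod_k\phi_{\Theta(A_k)}$.
\item Independent series converging a.s.\ converge in law, so the partial-sum triplets converge in the sense of the standard convergence criterion for infinitely divisible laws: $\sum_k q_0(A_k)+\int(\alpha^2\wedge 1)\,\lambda_0(A_k,\d\alpha)$ converges, and the drifts converge.
\item Uniqueness of the triplet then yields $q_0(A)=\sum_k q_0(A_k)$ and $\lambda_0(A,B)=\sum_k\lambda_0(A_k,B)$ for Borel $B$ bounded away from $0$.
\item For $a_0$, the series $\sum_k a_0(A_k)$ converges to $a_0(A)$. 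It converges absolutely because every rearrangement of the disjoint family gives the same union $A$, hence the same limit.
\end{itemize}
The triplet is determined by the law of the whole sum, so these identities are exact.

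\textbf{Extension to $\sigma(\RR)$.} Fix the partition sets $R_k\in\RR$. On each $R_k$ the restricted $\delta$-ring is a $\sigma$-algebra, and $q_0$, $a_0$, and $A\mapsto\lambda_0(A,B)$ (for $B$ bounded away from $0$) are finite measures or finite signed measures there. Defining
\[
q_0(A)=\lim_k q_0(A\cap R_k)
\]
gives a measure on $\sigma(\RR)$ with values in $[0,\infty]$. The statement, however, asks for a finite-valued $q_0$ and a real-valued signed measure $a_0$ on $\sigma(\RR)$. I will follow Rajput--Rosi\'nski and define the extension through the total variations on each $R_k$. This step needs care, since for sets outside $\RR$ the values may be infinite; the precise codomain claim may require interpreting finiteness only on $\RR$.

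\textbf{The product measure $\lambda_0$.} Define the $\sigma$-finite control measure
\[
\mu(A)=|a_0|(A)+q_0(A)+\int(1\wedge\alpha^2)\,\lambda_0(A,\d\alpha).
\]
Each $\lambda_0(\cdot,B)$ is absolutely continuous with respect to $\mu$. Using a countable generating $\pi$-system of Borel sets bounded away from $0$, I will take Radon--Nikodym derivatives and choose a version $\rho(s,\d\alpha)$ that is a L\'evy measure for $\mu$-a.e.\ $s$, by the usual regular-conditional-kernel construction. Then
\[
\lambda_0(C)=\int\!\!\int \mathbbm{1}_C(s,\alpha)\,\rho(s,\d\alpha)\,\mu(\d s)
\]
defines a $\sigma$-finite measure on $\sigma(\RR)\otimes\Borel(\R)$. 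It is $\sigma$-finite because it is finite on each $R_k\times\{|\alpha|>1/n\}$, and $\{\alpha=0\}$ carries no mass.

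\textbf{Main obstacle.} I expect the hardest step to be this measurable disintegration: choosing $\rho$ jointly measurable and a genuine L\'evy measure almost everywhere. It is handled by working on $\R\setminus\{0\}$ as a standard Borel space, checking additivity only on a countable generating algebra, and discarding a $\mu$-null set.
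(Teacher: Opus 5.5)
Your proposal is correct and is essentially the Rajput--Rosi\'nski argument, to which the paper defers without giving a proof of its own. That argument establishes additivity of the triplet on $\RR$ via uniqueness of the L\'evy--Khintchine representation and the convergence criterion, extends along the partition $(R_k)$, and disintegrates $\lambda_0$ against the control measure. Your caveat about the codomains is justified: in general $a_0$ and $q_0$ are finite only on $\RR$ (for Gaussian white noise on the bounded Borel sets of $\R$, for instance, $q_0(\R)=\infty$). This matches the Rajput--Rosi\'nski formulation, where these set functions are defined on the $\delta$-ring.
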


\begin{definition}\label{de.characteristics-RM}
	Let $\Theta\colon \RR \to L_P^0(\Omega;\R)$ be an infinitely divisible random measure
	with characteristic function as  in Theorem \ref{th.IDRM-char}.  We define the characteristics of $\Theta$ to be the triplet $[a_0,q_0,\lambda_0]$ and its control measure  as 
	\[
	\chi\colon \sigma(\RR) \to [0,\infty], \qquad 
	\chi(A)= \abs{a_0}\!(A)+ q_0(A)+ \int_{\R} \big( \abs{\beta}^2\wedge 1\big)\, \lambda_0(A,\d \beta), 
	\] 
	where $\abs{a_0}$ denotes the total variation of the signed measure $a_0$.
\end{definition}

\section{Deterministic integrands}\label{se.integration-det-RM}

Let $\Theta$ be an infinitely divisible random measure on a $\delta$-ring $\RR$ with characteristics $[a_0, q_0, \lambda_0]$ and control measure $\chi$. To develop the integration theory, it is useful to consider the Radon-Nikodym derivatives of the components of the characteristic triplet. Given that $a_0$ and $q_0$ are absolutely continuous with respect to the control measure $\chi$, their densities exist and are defined by
\begin{align} \label{eq.RD-a-q}
	a(r): =\frac{da_0}{d\chi}(r),\qquad
	q(r): =\frac{dq_0}{d\chi}(r).
\end{align}
A disintegration theorem for bimeasures (\cite[Th. 2.4]{Morando} and localising)   guarantees that there exists a kernel $\kappa\colon R\times\Borel(\R)\to [0,\infty]$ satisfying 
\begin{align}\label{eq.RD-lambda}
	\lambda_0(\d r,\d \beta)= \kappa(r,\d\beta)\, \chi(\d r). 
\end{align}

The image of an infinitely divisible random variable under a linear map is again infinitely divisible with known transformation of its characteristics, see \cite[Pro.~11.10]{Sato}.
Thus, for fixed $A\in\RR$ and $\alpha\in\R$, the  random variable $\alpha \Theta(A)$ is infinitely divisible with characteristics $[a_{A,\alpha}, q_{A,\alpha}, \lambda_{A,\alpha}]$ satisfying
\begin{align*}
	&a_{A,\alpha } = \alpha a_0(A)+ \int_{\R} \big( \tau(\alpha\beta)- \alpha\tau(\beta)\big) \lambda_0(A,\d \beta),\qquad  q_{A,\alpha} = \alpha^2 q_0(A)\\
     & \lambda_{A,\alpha}=\lambda_0(A,\cdot)\circ m_\alpha^{-1} \qquad\text{on }\Borel(\R\setminus\{0\}), 
\end{align*}
where $m_\alpha\colon \R\to\R$ is defined by $m_\alpha(\beta)=\alpha \beta$.
Applying  the Radon-Nikodym derivatives \eqref{eq.RD-a-q} and disintegration \eqref{eq.RD-lambda},  and letting
\begin{align}\label{eq.a_Theta}
	a_\Theta(r,\alpha):= \alpha a(r)+ \int_{\R} \big( \tau(\alpha\beta)- \alpha\tau(\beta)\big)\, \kappa(r, \d \beta),
\end{align}
we can rewrite these characteristics as 
\begin{align}\label{eq.RD-multiplied-1}
	&a_{A,\alpha }  =\int_A a_{\Theta}(r, \alpha)\, \chi(\d r),\qquad \quad
	q_{A,\alpha} = \int_A \alpha^2 q(r)\, \chi(\d r)\\
	&\int_{\R} \big(\abs{\beta}^2 \wedge 1\big)\, \lambda_{A,\alpha}(\d \beta)
	= \int_A \int_{\R} \big(\abs{\alpha\beta}^2 \wedge 1\big)\, \kappa(r,\d \beta)\, \chi(\d r). 
	\label{eq.RD-multiplied-2}
\end{align}
Motivated by these representations of the characteristics of $\alpha\Theta(A)$, we define, for $r\in R$ and $\alpha\in\R$, 
\begin{align}\label{eq.multiplied-char}
	\zeta_\Theta(r,\alpha)&= \int_{\R} \left(  \abs{\alpha\beta}^2\wedge 1 \right) \kappa(r,\d \beta) + \alpha^2 q(r),\\
	\eta_\Theta(r,\alpha)&= \sup_{\abs{\beta}\le 1} \abs{ a_\Theta(r,\alpha\beta)}.
\end{align}

\begin{definition}\label{def.RM-det_integrability}
	Let $\Theta\colon \RR \to L_P^0(\Omega;\R)$ be an  infinitely divisible random measure with control measure $\chi$.
	Define the space
		\[
	\I_\Theta:=\left\{ f\in L^0_\chi(R,\sigma(\RR);\R): \iota_\Theta(f)<\infty\right\},
	\]
	where 
	\[
	\iota_\Theta(f):=\int_R \big( \zeta_\Theta(r, \abs{f(r)}) + \eta_\Theta(r,\abs{f(r)})\big) \, \chi(\d r).
	\]

\end{definition}

\begin{example}\label{ex.Poisson-RM}
	Let $\Theta$ be a Poisson random measure.  From the characteristic function of the Poisson distribution, it follows that the characteristics of  $\Theta(A)$ is $[a_0(A), 0,a_0(A)\delta_1]$, where  $a_0(A):=E[\Theta(A)]$ for all $A\in \RR$. Theorem \ref{th.IDRM-char} guarantees that  $a_0$ extends to  a measure on $\sigma(\RR)$. 
	
	The control measure of $\Theta$ is given by $\chi(A)=2a_0(A)$, so that $a(r)=\tfrac12$ for all $r\in R$ and $\kappa( r, \d \beta)=\tfrac12 \delta_1(\d \beta)$. Consequently, we obtain $a_\Theta(r, \alpha)= \tfrac12 \tau(\alpha)$ for each $r\in  R$ and $\alpha \in \R$. Hence, for any measurable function  $f\colon R\to\R$, we obtain
	\[
	\iota_\Theta(f)=\int_{R} \left(\big(\abs{f(r)}^2 \wedge 1\big) + \big(\abs{f(r)}\wedge 1\big)\right)\,  a_0(\d r). 
	\]
	Hence, it follows that $f\in \I_{\Theta}$ if and only if 
	$\displaystyle \int_R \left(\abs{f(r)}\wedge 1\right)\, a_0(\d r)<\infty.$
\end{example}

\begin{example}\label{ex.comp-Poisson-RM}
	Let $\Theta$ be a Poisson random measure with intensity $a_0$ as in Example \ref{ex.Poisson-RM}. Define the compensated Poisson random measure by $\widetilde{\Theta}(A):=\Theta(A)-a_0(A)$ for all $A\in\RR$. Thus, the characteristics of $\widetilde{\Theta}$ is given by $[0,0,a_0(A)\delta_1]$. 
	
	The control measure of $\widetilde{\Theta}$ is given by $\chi(A)=a_0(A)$, so that $a(r)=0$ for all $r\in R$ and $\kappa(r, \d \beta)= \delta_1(\d \beta)$. In this case, we obtain
	  $\eta_{\widetilde{\Theta}}(r,\alpha)=\max\{\abs{\alpha}-1,0\}$. 
	Consequently, it follows that a measurable function  $f\colon R\to\R$ is in $\I_{\widetilde{\Theta}}$ if and only if 
	$\displaystyle \int_R \left(\abs{f(r)}^2\wedge\abs{f(r)}\right)\, a_0(\d r)<\infty.$
\end{example}

\begin{example}
	Let $\rho$ be a $\sigma$-finite measure on $\sigma(\RR)$ which is finite on $\RR$. Let $x,y\in\Rp$ satisfy $x+y=1$, and let $p\in(0,2)\setminus\{1\}$. Define the L\'evy measure
	\[
	\mu_p(\d\beta)
	:=
	\Big(
	xp\beta^{-p-1}\1_{(0,\infty)}(\beta)
	+
	yp(-\beta)^{-p-1}\1_{(-\infty,0)}(\beta)
	\Big)\d\beta.
	\]
	Let $\Theta\colon \RR\to L_P^0(\Omega;\R)$	be an infinitely divisible random measure with characteristics
	\[
	a_0(A)
	=
	\rho(A)\frac{x-y}{1-p},
	\qquad
	q_0\equiv 0,
	\qquad
	\lambda_0(A,\d\beta)
	=
	\rho(A)\mu_p(\d\beta).
	\]
	 The control measure is given by
	$
	A\mapsto
	(
	\frac{|x-y|}{|1-p|}
	+
	\frac{2}{2-p}
	)\rho(A).
	$
	However, we may equivalently choose $\chi:=\rho$. The corresponding Radon-Nikodym derivatives and disintegration kernel are then
	\[
	a(r)=\frac{x-y}{1-p},
	\qquad
	q(r)=0,
	\qquad
	\kappa(r,\d\beta)=\mu_p(\d\beta).
	\]
	A direct calculation shows that for every $\alpha\in\Rp$,
	\begin{align*}
		\zeta_\Theta(r,\alpha)
		&=
		\int_{\R}
		\big(|\alpha\beta|^2\wedge 1\big)
		\,\mu_p(\d\beta)
		=
		\frac{2}{2-p}|\alpha|^p,
		\\
		\eta_\Theta(r,\alpha)
		&=
		\sup_{|\gamma|\le 1}
		|a_\Theta(r,\alpha\gamma)|
		=
		\sup_{|\gamma|\le 1}
		\frac{|x-y|}{|1-p|}
		|\alpha\gamma|^p
		=
		\frac{|x-y|}{|1-p|}|\alpha|^p.
	\end{align*}
	Consequently,
	$
	\I_\Theta
	=
	L_\rho^p(R;\R).
	$
	
	In the case $R=[0,T]\times\R^d$ and $\rho$ the Lebesgue measure on $\Borel([0,T]\times\R^d)$, this example appears in \cite{Balan-stable}.
\end{example}

\begin{theorem}\label{th.RM-space}
	Let $\Theta\colon \RR \to L_P^0(\Omega;\R)$ be an infinitely divisible random measure. Then the space
	$\I_\Theta$	is a complete topological vector space with respect to the topology induced by the $F$-norm
	\[
	\norm{f}_{\Theta}:=\inf\left\{ c>0: \iota_\Theta\!\left(\tfrac{1}{c}f\right)\le c\right\}.
	\]
	Moreover, for every sequence $(f_k)\subseteq \I_\Theta$, we have 
	\[
	\norm{f_k}_\Theta\to 0 \qquad \Longleftrightarrow \qquad \iota_\Theta(f_k)\to 0.
	\]
	Further, step functions are dense in $(\I_\Theta,\norm{\cdot}_\Theta)$
	and the embedding $(\I_\Theta,\norm{\cdot}_\Theta)\hookrightarrow (L_\chi^0(R;\R), d_0)$ is continuous, where $d_0$ metrises the  convergence in measure $\chi$. If the measure space $(R,\sigma(\RR), \chi)$ is separable then $(\I_\Theta,\norm{\cdot}_\Theta)$ is separable. 	
\end{theorem}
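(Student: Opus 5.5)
The plan is to treat $\I_\Theta$ as a generalised Musielak--Orlicz space with modular $\iota_\Theta$ and run the standard modular-space argument. Everything rests on a few structural properties of $\Phi(r,\alpha):=\zeta_\Theta(r,\alpha)+\eta_\Theta(r,\alpha)$ for $\alpha\ge 0$.

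\textbf{Step 1: properties of $\Phi$.}
\begin{enumerate}[(i)]
\item For fixed $r$, the map $\alpha\mapsto\Phi(r,\alpha)$ is nondecreasing with $\Phi(r,0)=0$. For $\zeta_\Theta$ this is clear. For $\eta_\Theta$ it holds because it is a supremum over the growing set $\{\alpha\beta:\abs{\beta}\le1\}$.
\item $\Phi(r,\alpha)\to 0$ as $\alpha\downarrow 0$. For $\zeta_\Theta$ this follows by dominated convergence, since $\int(\abs{\beta}^2\wedge1)\kappa(r,\d\beta)<\infty$ for $\chi$-a.e.\ $r$. For $\eta_\Theta$ it needs a check of the integral term in \eqref{eq.a_Theta}.
\item A $\Delta_2$-type bound: $\Phi(r,2\alpha)\le C\,\Phi(r,\alpha)$ with $C$ independent of $r$. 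For $\zeta_\Theta$ one can take $C=4$. For $\eta_\Theta$ I would compute $a_\Theta(r,2\alpha)-2a_\Theta(r,\alpha)=\int(\tau(2\alpha\beta)-2\tau(\alpha\beta))\kappa(r,\d\beta)$. The integrand is bounded by a constant times $\abs{\alpha\beta}^2\wedge1$, since it vanishes for $\abs{\alpha\beta}\le\tfrac12$ and is bounded otherwise. This gives $\eta_\Theta(r,2\alpha)\le 2\eta_\Theta(r,\alpha)+c\,\zeta_\Theta(r,\alpha)$.
\item A quasi-triangle inequality $\Phi(r,\abs{x+y})\le C(\Phi(r,\abs{x})+\Phi(r,\abs{y}))$, which follows from (i) and (iii).
\end{enumerate}
I expect proving (ii) and (iii) for the drift part $\eta_\Theta$ to be the main obstacle, because $a_\Theta$ is not monotone and the supremum over $\abs{\beta}\le1$ is exactly what restores monotonicity. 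The bound $\abs{\tau(\gamma x)-\gamma\tau(x)}\le c(\abs{x}^2\wedge1)$ for $\abs{\gamma}\le 2$ should handle both.

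\textbf{Step 2: $F$-norm axioms and the equivalence with the modular.} From (i), $\iota_\Theta(f/c)$ is nonincreasing in $c$. The quasi-triangle inequality gives $\iota_\Theta(f+g)\le C(\iota_\Theta(f)+\iota_\Theta(g))$, so $\I_\Theta$ is a vector space. For the triangle inequality of $\norm{\cdot}_\Theta$ I would not appeal to convexity. Instead I would verify it directly, or, if it only holds up to constants, pass to the standard metric equivalent obtained by a chain/Aoki--Rolewicz construction. Homogeneity and continuity of scalar multiplication come from (ii) together with dominated convergence.

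The equivalence $\norm{f_k}_\Theta\to0 \iff \iota_\Theta(f_k)\to0$ then follows from the $\Delta_2$ property (iii):
\begin{itemize}
\item If $\norm{f_k}_\Theta\le c$, then $\iota_\Theta(f_k)\le \iota_\Theta(f_k/c)\le c$ for $c\le1$.
\item Conversely, $\iota_\Theta(f_k/c)\le C^{m}\iota_\Theta(f_k)$ when $c\ge 2^{-m}$.
\end{itemize}

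\textbf{Step 3: continuous embedding into $L^0_\chi$.} Convergence $\iota_\Theta(f_k)\to0$ forces $\zeta_\Theta(\cdot,\abs{f_k})\to0$ in $L^1_\chi$. Since $\zeta_\Theta(r,\alpha)$ is strictly positive and increasing in $\alpha$ wherever $\kappa(r,\cdot)\ne0$ or $q(r)>0$, this gives convergence in measure on finite-measure sets $R_n$. On the remaining set, where $\chi$-density comes only from $\abs{a_0}$, the relation $a_\Theta(r,\alpha)=\alpha a(r)$ makes $\eta_\Theta$ comparable to $\abs{\alpha}$ there, which handles that part.

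\textbf{Step 4: completeness.} Take a Cauchy sequence. By Step 3 it is Cauchy in measure, so a subsequence converges $\chi$-a.e.\ to some $f$. Fatou's lemma (using continuity of $\Phi$ in $\alpha$, from dominated convergence) gives $\iota_\Theta(f_k-f)\le\liminf_j\iota_\Theta(f_k-f_j)$, which is small.

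\textbf{Step 5: density and separability.} Density of step functions over $\RR$: truncate $f$ to $f\1_{\{\abs f\le n\}\cap R_n}$, which converges in modular by dominated convergence using $\Phi(r,\abs f)\in L^1_\chi$ and monotonicity. Then approximate bounded $f$ on $R_n$ uniformly by simple functions and replace their level sets by $\RR$-sets, using that $\chi(R_n)<\infty$ and $\Phi(\cdot,M)$ is integrable there. Separability follows by restricting to a countable generating family of sets and rational coefficients.
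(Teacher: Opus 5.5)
Your proposal is correct, and its skeleton matches the paper's. You treat $\iota_\Theta$ as a Musielak--Orlicz modular with $\Phi=\zeta_\Theta+\eta_\Theta$. You prove the $\Delta_2$-bound $\eta_\Theta(r,2\alpha)\le 2\eta_\Theta(r,\alpha)+c\,\zeta_\Theta(r,\alpha)$ by the same computation of $\tau(2y)-2\tau(y)$. You then use $\Delta_2$ to identify norm and modular convergence.

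The difference lies in what comes after. The paper obtains the $F$-norm property, completeness, $\widetilde{\I}_\Theta=\I_\Theta$, density, the embedding and separability by citing Musielak (Th.~1.5, 7.4, 6.2, 7.6, 7.7, 7.10, Remark~7.9). You prove everything by hand, and in a different order: you first establish the embedding into $L^0_\chi$, then get completeness from an a.e.\ convergent subsequence plus Fatou's lemma.

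The paper's route is short and places $\I_\Theta$ inside an established theory. Yours is self-contained and shows where the structure of the L\'evy characteristics enters. For example, your Step~3 makes explicit that the embedding rests on the normalisation $\abs{a}+q+\int(\abs{\beta}^2\wedge1)\,\kappa(\cdot,\d\beta)=1$ $\chi$-a.e. This forces $\Phi(r,\epsilon)>0$ for $\chi$-a.e.\ $r$, either via $\zeta_\Theta$ or via $\eta_\Theta(r,\alpha)=\alpha$ where only the drift charges $\chi$. The paper leaves this point inside ``Remark~7.9 applied locally''.

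The one step you should pin down is the triangle inequality. It holds exactly, so the Aoki--Rolewicz fallback is unnecessary. That fallback would also not prove the statement, which asserts that $\norm{\cdot}_\Theta$ itself is an $F$-norm.
\begin{itemize}
\item Your monotonicity (i) gives $\Phi(r,\max\{x,y\})\le\Phi(r,x)+\Phi(r,y)$.
\item Hence $\iota_\Theta(\alpha f+\beta g)\le\iota_\Theta(f)+\iota_\Theta(g)$ for $\alpha,\beta\ge0$ with $\alpha+\beta=1$. This is exactly the modular axiom the paper checks before invoking Musielak's Th.~1.5.
\item Now suppose $\iota_\Theta(f/a)\le a$ and $\iota_\Theta(g/b)\le b$. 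Writing $\frac{f+g}{a+b}=\frac{a}{a+b}\cdot\frac{f}{a}+\frac{b}{a+b}\cdot\frac{g}{b}$ yields $\iota_\Theta\big(\frac{f+g}{a+b}\big)\le a+b$.
\item This gives $\norm{f+g}_\Theta\le\norm{f}_\Theta+\norm{g}_\Theta$.
\end{itemize}

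Relatedly, an $F$-norm is not homogeneous. What you need instead is $\norm{-f}_\Theta=\norm{f}_\Theta$ and $\norm{\lambda f}_\Theta\le\norm{f}_\Theta$ for $\abs{\lambda}\le1$.

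Step~5 can be simplified. The family $\{A\in\sigma(\RR):A\cap R_n\in\RR\}$ is a $\sigma$-algebra containing $\RR$. So every $\sigma(\RR)$-set contained in $R_n$ already belongs to $\RR$, and no replacement of level sets is needed.
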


\begin{proof}
	Define a function $	\phi\colon R\times [0,\infty)\to [0,\infty)$ by $\phi(r,\alpha):=\zeta_\Theta(r,\alpha)+\eta_\Theta(r,\alpha)$.
	For every $r\in R$, the mapping $\alpha\mapsto \phi(r,\alpha)$ is continuous and non-decreasing on $\R_+$, while for every $\alpha\in\Rp$, the mapping $r\mapsto \phi(r,\alpha)$ is measurable. Hence the functional
    $\iota_\Theta(f)$
	defines a modular in the sense of \cite[Def.~1.1]{Musielak}; see also Example~I in \cite[Section 1.9]{Musielak} and the discussion following Definition~7.1.
	
		By \cite[Th.~1.5]{Musielak}, the functional $\norm{\cdot}_\Theta$ is an $F$-norm on $\widetilde{\I}_\Theta$, where 
	\[
	\widetilde{\I}_\Theta:=\big\{f\in L^0_\chi(R;\R): \lim_{\lambda\to 0}\iota_\Theta(\lambda f)=0\big\}.
	\]
      Since the control measure $\chi$ is $\sigma$-finite, \cite[Th.~7.7]{Musielak} implies that $(\widetilde{\I}_\Theta,\norm{\cdot}_\Theta)$ is complete. 
	
	Next, we verify a $\Delta_2$-condition. Using
	$|\tau(2\beta)-2\tau(\beta)|\le (\tau(\beta))^2$ for every $\beta\in\R$,
	we obtain for every $r\in R$,  $\alpha\in\Rp$, and $\gamma\in [-1,1]$, 
   \begin{align*}
   	\abs{a_\Theta(r,2\alpha\gamma)}
   	&\le 2\abs{a_\Theta(r,\alpha\gamma)}
   	+\int_{\R}\abs{\tau(2\alpha\gamma\beta)-2\tau(\alpha\gamma\beta)}\,\kappa(r,\d\beta)\\
   	&\le 2\abs{a_\Theta(r,\alpha\gamma)}
   	+\int_{\R}(\abs{\alpha\beta}^2\wedge 1)\,\kappa(r,\d\beta). 
   \end{align*}
   Taking the supremum over $\gamma$ gives $\eta_\Theta(r,2\alpha)
   \le 2\eta_\Theta(r,\alpha)+\zeta_\Theta(r,\alpha)$. 
	Since $\zeta_\Theta(r,2\alpha)\le 4\zeta_\Theta(r,\alpha)$, it follows that
	\begin{equation}\label{eq.Mus1-rewrite}
		\phi(r,2\alpha)
		=\zeta_\Theta(r,2\alpha)+\eta_\Theta(r,2\alpha)
		\le 5\zeta_\Theta(r,\alpha)+2\eta_\Theta(r,\alpha)
		\le 5\phi(r,\alpha)
	\end{equation}
	for all $r\in R$ and $\alpha\in\Rp$. Consequently, $\iota_\Theta$ satisfies the 
	$\Delta_2$-condition 
	\begin{align}\label{eq.Delta_2}
		\iota_\Theta(2f)\le 5\,\iota_\Theta(f) \qquad\text{for all } f\in L^0_\chi(R;\R).
	\end{align}
	
	By \cite[Th.\ 7.4]{Musielak}, we obtain $
	\widetilde{\I}_\Theta
	=\big\{f\in L^0_\chi(R;\R): \iota_\Theta(\beta f)<\infty \text{ for some }\beta>0\big\}$.
		Hence $\I_\Theta\subseteq \widetilde{\I}_\Theta$ is immediate. Conversely, let $f\in \widetilde{\I}_\Theta$. Then there exists $\beta>0$ such that $\iota_\Theta(\beta f)<\infty$. Choosing $n\in\N$ such that $2^{-n}\le \beta$, Estimate~\eqref{eq.Delta_2} yields 
 	$
	\iota_\Theta(f)\le 5^n \iota_\Theta(\beta f)<\infty.
	$
	Thus $f\in \I_\Theta$, and so
	$
	\widetilde{\I}_\Theta=\I_\Theta.
	$
	
	Now let $(f_k)\subseteq \I_\Theta$. Since $\iota_\Theta$ satisfies the $\Delta_2$-condition, \cite[Th.~6.2]{Musielak} implies that modular convergence and convergence in the $F$-norm are equivalent, that is 
	$
	\norm{f_k}_\Theta\to 0
	$ if and only if $
	\iota_\Theta(f_k)\to 0.
	$
	Thus $(\I_\Theta,\norm{\cdot}_\Theta)$ is a complete topological vector space.
	
	Finally, we verify the local integrability condition required in \cite[Th.~7.6]{Musielak}. By \eqref{eq.Mus1-rewrite}, for every $\beta>0$ there exists $C_\beta>0$ such that 
	$\phi(r,\beta)\le C_\beta \phi(r,1)$ for all $r\in R$.
	Moreover, for every $A\in \sigma(\mathcal R)$ with $\chi(A)<\infty$, we have
	$\int_A \phi(r,1)\,\chi(\d r)<\infty$.
	Indeed,
	\[
	\eta_\Theta(r,1)
	=\sup_{|\gamma|\le 1}|a_\Theta(r,\gamma)|
	\le |a(r)|+\int_{\mathbb R} (|\beta|^2\wedge 1)\,\kappa(r,\d\beta). 
	\]
	Therefore,
	\begin{align*}
		\int_A \phi(r,1)\,\chi(\d r)
		&\le \int_A |a(r)|\,\chi(dr)+q_0(A)
		+2\int_{\mathbb R} (|\beta|^2\wedge 1)\,\lambda_0(A,\d\beta) \\
		&= |a_0|(A)+q_0(A)
		+2\int_{\mathbb R} (|\beta|^2\wedge 1)\,\lambda_0(A,\d\beta)
		<\infty.
	\end{align*}
	Hence $\phi$ is locally integrable in the sense of \cite[Def.~7.5]{Musielak}. Since \eqref{eq.Delta_2} also implies that
    $\I_\Theta=\widetilde{\I}_\Theta=E^\phi$,
	\cite[Th.~7.6]{Musielak} yields that step functions are dense in $(\I_\Theta,\norm{\cdot}_\Theta)$.
	
	The continuity of the embedding $(\I_\Theta,\norm{\cdot}_\Theta)\hookrightarrow (L^0_\chi,d_0)$
	follows from Musielak, Remark 7.9, applied locally on sets of finite $\chi$-measure, together with  \cite[Th.\ 1.6]{Musielak}. Separability is guaranteed by \cite[Th.\ 7.10]{Musielak}. 
\end{proof}

A simple function on $R$ is of the form
\begin{align}\label{eq.simple-R}
	f\colon R\to \R, \qquad f(r)=\sum_{k=1}^n \beta_k \1_{A_k}(r), 
\end{align}
for some $\beta_k\in\R$ and disjoint sets $A_k\in \RR$ for $k=1,\dots, n$. The space of all simple functions is denoted by $\S_{\rm det}$. The subspace of all simple functions of the form \eqref{eq.simple-R}  with $\abs{\beta_k}\le 1$ for all $k\in \{1,\dots, n\}$ is denoted by $\S_{\rm det}^{1}$. 

The integral of a simple function $f\in \S_{\rm det}$ of the form \eqref{eq.simple-R} with respect to an  infinitely divisible random measure $\Theta$ on $\RR$ is defined as 
\[
\int_{R}f(r)\, \Theta(\d r)= \sum_{k=1}^n \beta_k \Theta(A_k). 
\]
The stochastic integral for deterministic integrands is defined as follows.
\begin{definition}\label{de.integral-det-RM}
	Let $\Theta\colon \RR \to L_P^0(\Omega;\R)$  be an  infinitely divisible random measure with control measure $\chi$.
	A measurable function $f\colon (R,\sigma(\RR))\to (\R,\Borel(\R))$ is $\Theta$-integrable if there 
	exists a sequence  $(f_k)_{k\in\N}$ of simple functions in $\S_{\rm det}$ satisfying
	\begin{enumerate}
		\item[{\rm (a)}] $(f_k)_{k\in\N}$ converges to $f$ $\chi$-almost everywhere;
		\item[{\rm (b)}] $\displaystyle \lim_{k,\ell\to\infty} \sup_{\gamma\in \S_{\rm det}^{1}} E\left[ \abs{\int_R \gamma(r)\big(f_k(r)-f_\ell(r)\big)\, \Theta(\d r)}\wedge 1\right]=0.  $
	\end{enumerate}
	In this case,  the stochastic integral of the deterministic function $f$ is defined by
	\[I_\Theta(f):=\int f \,  \d \Theta:= \lim_{k\rightarrow \infty} \int_R f_k(r) \, \Theta(\d r) \quad \text{in}\; L_P^0(\Omega;\R).\]
\end{definition}
The convergence in Part (b) guarantees that $(\int_{R} (f_k-f_\ell)\,\Theta)_{k,\ell\in\N}$  is a Cauchy sequence $L_P^0(\Omega;\R)$, and thus the limit is well-defined. Standard arguments show that the limit does not depend on the approximating sequence $(f_k)$.

The following result exactly characterises the space of deterministic functions integrable with respect to a given infinitely divisible random measure $\Theta$.
\begin{theorem}\label{th.RM-det_if_and_only_if_integrable}
	Let  $\Theta\colon \RR \to L_P^0(\Omega;\R)$ be an  infinitely divisible random measure. 
	Then the space of  measurable functions $f\colon (R,\sigma(\RR))\to (\R,\Borel(\R))$  integrable with respect to $\Theta$ coincides with the space $\I_\Theta$.
\end{theorem}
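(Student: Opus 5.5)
The proof rests on a two-sided comparison between the quantity in condition (b) of Definition~\ref{de.integral-det-RM} and the modular $\iota_\Theta$. Once this comparison is available, both inclusions follow from Theorem~\ref{th.RM-space}.

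\emph{Step 1: the key estimate.} Let $g\in\S_{\rm det}$. Since $\Theta$ is independently scattered, $\int g\,\d\Theta$ is infinitely divisible, with characteristics obtained by summing the characteristics of the $\beta_k\Theta(A_k)$. By \eqref{eq.RD-multiplied-1} and \eqref{eq.RD-multiplied-2}, its drift is $\int_R a_\Theta(r,g(r))\,\chi(\d r)$, its Gaussian part is $\int_R g^2q\,\d\chi$, and the truncated second moment of its L\'evy measure is $\int_R\int_\R(\abs{g(r)\beta}^2\wedge1)\,\kappa(r,\d\beta)\,\chi(\d r)$.

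I then use the classical equivalence for an infinitely divisible random variable $X$ with characteristics $[b,\sigma^2,\nu]$:
\[
E[\abs{X}\wedge 1]\to0 \iff \abs{b}+\sigma^2+\int_\R(\abs{\beta}^2\wedge1)\,\nu(\d\beta)\to 0,
\]
in the quantitative form used in \cite{Kwapien-Woyczynksi} and \cite{Rajput-Rosinski}. The implication from right to left follows from the Lévy--Khintchine formula. For the converse, smallness of $E[\abs{X}\wedge1]$ forces the characteristic function to be close to $1$ near the origin. Averaging $1-\Re\phi_X$ then controls the Gaussian part and the truncated L\'evy part, and once these are controlled, the drift is controlled by $\Im\log\phi_X$.

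Next, take the supremum over $\gamma\in\S^1_{\rm det}$. The drift of $\int\gamma g\,\d\Theta$ is $\int_R a_\Theta(r,\gamma(r)g(r))\,\chi(\d r)$. Choosing $\gamma$ pointwise to nearly maximise $\abs{a_\Theta(r,\gamma g(r))}$, with the right sign and restricted to sets where the sign is constant, this drift approximates $\int_R\eta_\Theta(r,\abs{g(r)})\,\chi(\d r)$. The function $\zeta_\Theta(r,\abs{\gamma g})$ is monotone in $\abs\gamma$, so it is maximised at $\abs{\gamma}=1$. Together these give
\[
\sup_{\gamma\in\S^1_{\rm det}}E\Big[\Big|\int\gamma g\,\d\Theta\Big|\wedge1\Big]\to0 \iff \iota_\Theta(g)\to0
\]
along sequences $(g_k)\subseteq\S_{\rm det}$.

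There is a technical point in the drift approximation. The pointwise maximiser $\gamma$ is only measurable, not simple. I handle this by approximating it with simple functions and applying dominated convergence, using the continuity of $\alpha\mapsto a_\Theta(r,\alpha)$ and the bound $\abs{a_\Theta(r,\alpha)}\le\eta_\Theta(r,\abs{\alpha})$.

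\emph{Step 2: $\I_\Theta$ consists of integrable functions.} Let $f\in\I_\Theta$. By Theorem~\ref{th.RM-space}, there exist simple $f_k$ with $\norm{f_k-f}_\Theta\to0$. Since $\norm{\cdot}_\Theta$ dominates convergence in measure, passing to a subsequence gives $f_k\to f$ $\chi$-a.e. The sequence is Cauchy in $\norm{\cdot}_\Theta$, so $\iota_\Theta(f_k-f_\ell)\to0$, and Step 1 yields condition (b).

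\emph{Step 3: integrable functions lie in $\I_\Theta$.} Let $f$ be integrable with approximating sequence $(f_k)$. By Step 1, condition (b) gives $\iota_\Theta(f_k-f_\ell)\to0$, so $(f_k)$ is Cauchy in $\I_\Theta$. By completeness it converges in $\norm{\cdot}_\Theta$ to some $h\in\I_\Theta$, hence in $\chi$-measure, and therefore $h=f$ $\chi$-a.e. Thus $f\in\I_\Theta$.

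I expect the main obstacle to be the quantitative direction ``$\Rightarrow$'' in Step 1, specifically controlling the drift uniformly over $\gamma$. My first attempt is to bound the truncated-moment part using the symmetrisation $X-X'$, which removes the drift. I then recover the drift from $E[\tau(X)]$, whose deviation from $b$ is bounded by the already-controlled L\'evy and Gaussian terms.
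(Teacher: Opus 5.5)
Your proposal is correct and follows the paper's route. Your Step 1 is Proposition \ref{pro.RM-cont_of_int_op}, phrased with $\iota_\Theta$, which is equivalent to $\norm{\cdot}_\Theta$ by Theorem \ref{th.RM-space}; it includes the measurable maximiser of Lemma \ref{le.RM-drift-equality} and the sequential argument giving uniformity in $\gamma$. Your Steps 2--3 are the paper's density/completeness/embedding argument, and the only difference is that you prove the continuity criterion for infinitely divisible laws via the L\'evy--Khintchine exponent, whereas the paper uses Lemmas \ref{le.limit_characteristics} and \ref{le.independent_sum_control} together with Jacod--Shiryaev.

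One small correction to your closing remark: $\abs{E[\tau(X)]-b}$ is not bounded by the Gaussian and L\'evy terms (take $X\equiv b$ with $\abs{b}>1$). The correct comparison is $\abs{E[\tau(X)]-\tau(b)}\le E[\abs{X-b}\wedge 2]$, which those terms do control, so $\tau(b_k)\to 0$ and hence $b_k\to0$. Alternatively, your argument via the imaginary part of $\log\phi_X$ already suffices.
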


We need a few results for the proof of Theorem \ref{th.RM-det_if_and_only_if_integrable}, which we will present in the following.  
\begin{lemma}\label{le.RM-drift-equality}
	Let $\Theta\colon \RR \to L_P^0(\Omega;\R)$  be an  infinitely divisible random measure with control measure $\chi$. For  $a_\Theta$ defined in \eqref{eq.a_Theta}, each $f\in \S_{\rm det}$ satisfies  
	\[
	\int_R \sup_{\abs{\beta}\le 1} a_\Theta(r,\beta f(r))\, \chi(\d r)
	= \sup_{\gamma \in \S_{\rm det}^{1}} \int_{R} a_\Theta(r,\gamma(r)f(r))\, \chi(\d r).  
	\]
\end{lemma}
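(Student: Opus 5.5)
The plan is to prove the two inequalities separately. The inequality "$\ge$" is immediate: for any $\gamma\in\S_{\rm det}^1$ we have $\abs{\gamma(r)}\le 1$ for all $r$. Hence
\[
a_\Theta(r,\gamma(r)f(r))\le \sup_{\abs{\beta}\le 1}a_\Theta(r,\beta f(r))
\]
pointwise, and integrating against $\chi$ gives the bound. Here we should check that both sides make sense. Since $f\in\S_{\rm det}$ is supported on a set $A=\bigcup_k A_k\in\RR$ with $\chi(A)<\infty$, the integrand vanishes off $A$, because $a_\Theta(r,0)=0$. On $A$ it is bounded in absolute value by $\eta_\Theta(r,\abs{f(r)})$. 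This quantity is $\chi$-integrable over $A$ by the local integrability established in the proof of Theorem~\ref{th.RM-space} (namely $\phi(r,\beta)\le C_\beta\phi(r,1)$ and $\int_A\phi(r,1)\,\chi(\d r)<\infty$). The supremum may be taken over rational $\beta$, since $\beta\mapsto a_\Theta(r,\beta)$ is continuous by dominated convergence, so the left integrand is measurable.

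For "$\le$", write $f=\sum_{k=1}^n\beta_k\1_{A_k}$. On each $A_k$, the left integrand is $g_k(r):=\sup_{\abs{\beta}\le 1}a_\Theta(r,\beta\beta_k)$. The task is to approximate this pointwise supremum by $a_\Theta(r,\gamma(r)\beta_k)$ with $\gamma$ a simple function over $\RR$ taking values in $[-1,1]$. The first step is a measurable near-maximiser. Enumerate the rationals $(t_j)$ in $[-1,1]$. Continuity of $\beta\mapsto a_\Theta(r,\beta\beta_k)$ gives $g_k(r)=\sup_j a_\Theta(r,t_j\beta_k)$. Fix $\epsilon>0$ and define $j(r)$ as the least $j$ with $a_\Theta(r,t_j\beta_k)>g_k(r)-\epsilon$. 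The sets $B_{k,j}=\{r\in A_k:j(r)=j\}$ are measurable, disjoint, and cover $A_k$. Set $\gamma_\epsilon=\sum_{k,j}t_j\1_{B_{k,j}}$. Then $\int a_\Theta(r,\gamma_\epsilon f)\,\d\chi\ge\int\sup_\beta a_\Theta(r,\beta f)\,\d\chi-\epsilon\chi(A)$.

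The main obstacle is that $\gamma_\epsilon$ is a countably-valued function on sets of $\sigma(\RR)$, not an element of $\S_{\rm det}^1$. We handle this by truncation and approximation. Since $\abs{a_\Theta(r,t\beta_k)}\le\eta_\Theta(r,\abs{\beta_k})$ is integrable on $A_k$, we may keep only finitely many $j\le N$ and set $\gamma=0$ on the remainder, losing at most $\epsilon$ for large $N$. Each $B_{k,j}\subseteq A_k\in\RR$ lies in $\sigma(\RR)\cap A_k$, which is the $\sigma$-algebra generated by the ring $\RR\cap A_k$. By the standard approximation lemma for the finite measure $\chi$ on $A_k$, there exist sets $C_{k,j}\in\RR$ with $\chi(B_{k,j}\triangle C_{k,j})$ arbitrarily small. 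After disjointifying them within $A_k$, which preserves membership in $\RR$, the resulting $\gamma'\in\S_{\rm det}^1$ differs from the truncated $\gamma_\epsilon$ only on a set of small $\chi$-measure. The resulting change in the integral is at most $2\int_D\eta_\Theta(r,\abs{f(r)})\,\chi(\d r)$ over that small set $D$, which is small by absolute continuity of the integral. Letting $\epsilon\to0$ yields "$\le$" and completes the proof.
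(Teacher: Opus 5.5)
Your proof is correct. It follows the same overall plan as the paper: find a measurable (near-)maximiser, then replace it by an element of $\S_{\rm det}^1$. The selection step is done differently.

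The paper constructs an exact maximiser $\pi(r,\alpha)=\min\{\beta\in[-1,1]: a_\Theta(r,\alpha\beta)=\sup_{\abs{\gamma}\le 1}a_\Theta(r,\alpha\gamma)\}$. It proves $\pi$ is $\sigma(\RR)\otimes\Borel(\R)$-measurable through the auxiliary functions $M_x$, and then only states that $r\mapsto\pi(r,f(r))$ can be approximated by functions in $\S_{\rm det}^1$.

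You instead take an $\epsilon$-maximiser from a countable dense subset of $[-1,1]$. This avoids the measurability question for an argmax entirely. You also spell out two steps the paper leaves implicit:
\begin{itemize}
\item both integrals are finite, via the bound $\eta_\Theta(r,\abs{f(r)})$ and the local integrability of $\phi$;
\item a countably-valued $\sigma(\RR)$-measurable selector can be turned into a genuine element of $\S_{\rm det}^1$ at small cost.
\end{itemize}

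The paper's route yields an exact selector that is jointly measurable in $(r,\alpha)$ and does not depend on $f$. It is reused in the proof of Proposition~\ref{pro.RM-small-if-small}, with $\alpha$ replaced by the random value $\Xi_k(\omega,t,x)$. Your $\epsilon$-selector is enough for the lemma itself, and the same construction would also give joint measurability in $(r,\alpha)$.

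One simplification: you do not need the approximation lemma for the sets $B_{k,j}$. For $A_k\in\RR$, the family $\{B\in\RR: B\subseteq A_k\}$ is already a $\sigma$-algebra on $A_k$, because $\bigcup_i B_i=A_k\setminus\bigcap_i(A_k\setminus B_i)$. Hence $\sigma(\RR)\cap A_k=\RR\cap A_k$, so each $B_{k,j}$ already lies in $\RR$. Only the truncation to $j\le N$ is actually needed.
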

\begin{proof}
	Since the function $\alpha\mapsto a_\Theta(r,\alpha)$ is continuous for each fixed $r\in R$, we can 	
	define the function
	\begin{align*}
		\pi\colon R\times \R\to [-1,1],\qquad
		\pi(r,\alpha)= \min\{\beta\in [-1,1]: a_\Theta(r,\alpha\beta)= \sup_{\abs{\gamma}\le 1} a_\Theta(r,\alpha\gamma)\}.
	\end{align*}
	To show that $\pi$ is measurable with respect to $\sigma(\RR)\otimes \Borel(\R)$, define, for fixed $x\in [-1,1]$, the function 
	\[
	M_x\colon R\times \R\to \R, \qquad 
	M_x(r,\alpha)= \sup_{-1\le \gamma\le x} a_\Theta(r, \alpha\gamma). 
	\]
	The function $M_x$ is measurable with respect to $\sigma(\RR)\otimes \Borel(\R)$, since $a_\Theta\colon R\times \R\to \R$ is measurable in the first argument and continuous in the second argument. Since, for $x\in [-1,1]$,
	\[
	\{(r,\alpha)\in R\times\R:\, \pi(r,\alpha)\le x\}=\{(r,\alpha)\in R\times \R: M_x(r,\alpha)=\sup_{\abs{\gamma}\le 1} a_\Theta(r,\alpha\gamma)\}, 
	\]
	we obtain that $\pi$ is measurable with respect to $\sigma(\RR)\otimes \Borel(\R)$. Since the definition of $\pi$ shows
	\[
	\int_R \sup_{\abs{\beta}\le 1} a_\Theta(r,\beta f(r))\, \chi(\d r)
	=\int_R a_\Theta(r, \pi(r,f(r)) f(r))\, \chi(\d r), 
	\]
	the proof is completed by approximating the measurable function $r\mapsto \pi(r,f(r))$ by simple functions in $\S_{\rm det}^{1}$. 
\end{proof}

\begin{lemma}\label{le.limit_characteristics}
	Let $L$ be a real-valued L\'evy process with characteristics $(b, q, \lambda)$ with respect to the truncation function $\tau$, and let $(\pi_n)_{n \in \mathbb{N}}$ be a normal sequence of partitions $\pi_n=(p_{k,n})_{k=0,\dots, m_n}$ of $[s,t]$. If we put $d_{k,n}=L(p_{k,n})-L(p_{{k-1},n})$, 
	then we have
	\begin{enumerate}\label{le.char_as_limit_of_series}
		\item [\rm (a)] $ \displaystyle \lim_{n \rightarrow \infty} \sum_{k=1}^{m_n} E\left[\tau(d_{k,n})\right]=(t-s)b$;
		\item [\rm (b)] $\displaystyle \lim_{n \rightarrow \infty} \sum_{k=1}^{m_n} E\left[ \tau^2(d_{k,n})  \right]= (t-s)\left(\int_{\R} \left( \abs{\beta}^2 \wedge 1 \right)\, \lambda({\rm d}\beta)+ q\right)$.
	\end{enumerate}
\end{lemma}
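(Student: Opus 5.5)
Both parts reduce to a single distribution by stationarity of increments. Write $h_{k,n}=p_{k,n}-p_{k-1,n}$ for the mesh widths, so that $\sum_k h_{k,n}=t-s$ and $\delta_n:=\max_k h_{k,n}\to 0$. Since $d_{k,n}$ has the same law as $L(h_{k,n})$, it suffices to show
\[
\lim_{h\downarrow 0}\tfrac1h E[\tau(L(h))]=b,\qquad \lim_{h\downarrow 0}\tfrac1h E[\tau^2(L(h))]=\int_\R(\abs{\beta}^2\wedge 1)\,\lambda(\d\beta)+q .
\]
Once these hold, write each sum as $\sum_k h_{k,n}\,g(h_{k,n})$ with $g(h):=\tfrac1h E[\tau(L(h))]$, respectively $g(h):=\tfrac1h E[\tau^2(L(h))]$. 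Because $g(h)$ converges to the target constant $c$ as $h\downarrow 0$,
\[
\Bigl|\sum_k h_{k,n}g(h_{k,n})-(t-s)c\Bigr|\le (t-s)\sup_{h\le\delta_n}\abs{g(h)-c}\to 0 .
\]
This is the whole reduction.

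For the small-time limits I use the standard fact that the laws $\mu_h$ of $L(h)$ satisfy $\tfrac1h\mu_h\to\lambda$ vaguely on $\R\setminus\{0\}$. More precisely, for every bounded continuous $g$ vanishing in a neighbourhood of $0$,
\[
\tfrac1h E[g(L(h))]\to\int g\,\d\lambda,
\]
and this extends to bounded continuous $g$ with $g(\beta)=o(\beta^2)$ at $0$. Together with the generator identity
\[
\tfrac1h E[f(L(h))-f(0)]\to bf'(0)+\tfrac12 q f''(0)+\int\bigl(f(\beta)-f(0)-f'(0)\tau(\beta)\bigr)\lambda(\d\beta)
\]
for $f\in C_b^2$ (which follows from Itô's formula or the Lévy–Khintchine formula), this gives the result. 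The only difficulty is that $\tau$ and $\tau^2=\abs{\beta}^2\wedge 1$ are not $C^2$: they have kinks at $\abs{\beta}=1$.

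I handle the kinks by approximation. For $\epsilon>0$ choose $f_\epsilon\in C_b^2$ with $f_\epsilon=\tau$ on $[-1+\epsilon,1-\epsilon]$ and $\abs{f_\epsilon-\tau}\le 2\cdot\1_{\{1-\epsilon\le\abs{\beta}\le 1+\epsilon\}}$. Then $f_\epsilon(0)=0$ and $f_\epsilon'(0)=1$, so the generator limit for $f_\epsilon$ equals
\[
b+\int(f_\epsilon-\tau)\,\d\lambda .
\]
The error term is controlled by $\limsup_h\tfrac1h P(1-\epsilon\le\abs{L(h)}\le1+\epsilon)\le\lambda(\{1-2\epsilon\le\abs{\beta}\le1+2\epsilon\})$, which follows from vague convergence applied to a continuous bump. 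This tends to $\lambda(\{\abs{\beta}=1\})$ as $\epsilon\to0$.

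That limit need not vanish, and I expect it to be the main obstacle. The resolution is that $\tau$ is continuous at $\pm1$, so no mass is lost: the error $f_\epsilon-\tau$ is supported in a shrinking band where both functions are continuous. I therefore bound $\abs{f_\epsilon-\tau}$ by $\omega(\epsilon)\1_{\text{band}}$ instead of by $2\cdot\1_{\text{band}}$, where $\omega(\epsilon)$ is small; for instance, take $f_\epsilon$ to be a mollification of $\tau$ near $\pm1$. With this choice the error is at most $\omega(\epsilon)\,\lambda(\{\abs{\beta}\ge1/2\})\to0$.

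The same argument applies to $\tau^2$. Here $f_\epsilon(\beta)=\beta^2$ near $0$, so the $q$-term produces $\tfrac12q\cdot2=q$, the drift term vanishes because $f_\epsilon'(0)=0$, and the jump term converges to $\int(\abs{\beta}^2\wedge1)\,\d\lambda$. This yields part (b).
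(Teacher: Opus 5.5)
Your proposal is correct and follows the paper's argument: stationarity turns each sum into $\sum_k h_{k,n}\,g(h_{k,n})$, and the small-time limits of $\tfrac1h E[\tau(L(h))]$ and $\tfrac1h E[\tau^2(L(h))]$ are obtained by evaluating the generator on $\tau$ and $\tau^2$ at $0$. The paper applies the generator to $\tau$ directly, since $\tau$ is bounded and smooth near $0$; your mollification near $\pm1$ supplies a valid justification of that step, although splitting $\tau=\tau\phi+\tau(1-\phi)$ with a smooth bump $\phi$ at $0$, so that $\tau\phi$ is $C^2$ with compact support and $\tau(1-\phi)$ is continuous and vanishes near $0$, would have made the kinks at $\pm1$ irrelevant.
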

\begin{proof}
	Define $e(r):=E[\tau(L(r))]$ for $r\ge 0$. 	
	 Let $A$ denote the generator of $L$. For every $f\colon\R\to\R$ continuous, bounded, and locally twice differentiable in $x\in\R$, we have
	 \[
	 Af(x)
	 =
	 bf'(x)+\frac{q}{2}f''(x)
	 +\int_{\mathbb{R}}\Big(f(x+\beta)-f(x)-\tau(\beta)f'(x)\Big)\,\lambda({\rm d}\beta).
	 \]	
   Applying $A$ to $\tau$, we conclude
    \[
    	\lim_{r\downarrow 0}\frac{1}{r}E[\tau(L(r))]=A\tau(0)=b.
    \]
    Thus, we obtain $e(r)=br+r\epsilon(r)$ for a function $\epsilon\colon\Rp\to\R$ with $\epsilon(r)\to 0$.
     By stationarity of increments, for $\delta_{i,n}:=p_{i,n}-p_{i-1,n}$, we obtain 
    \[
    \sum_{\pi_n}E[\tau(d_{i,n})]=
    \sum_{\pi_n} e(\delta_{i,n})
    =
    b\sum_{\pi_n}\delta_{i,n}
    +
    \sum_{\pi_n}\delta_{i,n}\varepsilon(\delta_{i,n})
    \longrightarrow (t-s)b,
    \]
    which completes the proof of Part (a). 
    
    Part (b) can be proved analogously by considering the map $r\mapsto E[\tau^2(L(r))]$. 
\end{proof}

\begin{lemma}\label{le.independent_sum_control}
		Let $X_1,\dots,X_n$ be independent real-valued random variables. Then
	\[E\left[\abs{\sum_{k=1}^n X_k} \wedge 1 \right]
	\le\left( \sum_{k=1}^n E\left[\tau^2(X_k)\right]  + \left( \sum_{k=1}^n E[{\tau}(X_k)]\right)^2 \right)^{1/2} \!\!\!\!\!
	+ \sum_{k=1}^n E\left[{\tau}^2(X_k)\right] .\]
\end{lemma}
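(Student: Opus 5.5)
We split each summand into a truncated part and a large-jump part, and control the two separately. Write $X_k=\tau(X_k)+(X_k-\tau(X_k))$, and set
\[
S:=\sum_{k=1}^n\tau(X_k),\qquad B:=\bigcup_{k=1}^n\{\abs{X_k}>1\}.
\]
On the complement $B^c$ we have $X_k=\tau(X_k)$ for every $k$, so $\sum_k X_k=S$ there. Since $x\mapsto\abs{x}\wedge 1$ is bounded by $1$, we obtain
\[
E\Big[\Big|\sum_{k=1}^n X_k\Big|\wedge 1\Big]\le E[\abs{S}]+P(B).
\]

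For the second term we use a union bound. On $\{\abs{X_k}>1\}$ we have $\tau^2(X_k)=1$, so $P(\abs{X_k}>1)\le E[\tau^2(X_k)]$. Hence
\[
P(B)\le\sum_{k=1}^n E[\tau^2(X_k)].
\]
This gives the last summand in the claimed bound.

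For the first term we apply Cauchy--Schwarz, $E\abs{S}\le (E[S^2])^{1/2}$, and compute $E[S^2]$ using independence. The variables $\tau(X_k)$ are bounded and independent, so
\[
E[S^2]=\sum_{k=1}^n\operatorname{Var}(\tau(X_k))+\Big(\sum_{k=1}^n E[\tau(X_k)]\Big)^2\le\sum_{k=1}^n E[\tau^2(X_k)]+\Big(\sum_{k=1}^n E[\tau(X_k)]\Big)^2 .
\]
Taking the square root gives the first summand of the claimed bound, and combining the two estimates proves the lemma.

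No step is a genuine obstacle. The only point needing care is the first display: we must check that $\abs{\sum_k X_k}\wedge 1\le \abs{S}+\1_B$ holds pointwise. On $B^c$ this is the identity $\sum_k X_k=S$. On $B$ the right-hand side is at least $1$, which dominates the left-hand side. Independence is used only in computing $E[S^2]$, where it makes the cross terms factor.
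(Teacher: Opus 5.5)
Your proof is correct and follows the paper's argument: you split off $\sum_k\tau(X_k)$ from the event $\bigcup_k\{\abs{X_k}>1\}$, control the truncated sum by Cauchy--Schwarz with independence making the cross terms factor, and bound the event by a union bound using $P(\abs{X_k}>1)\le E[\tau^2(X_k)]$. Writing $E[S^2]$ as the sum of variances plus the squared mean is only a repackaging of the paper's explicit expansion of the cross terms.
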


\begin{proof}
	Independence of the random variables implies	
	\[
	E[\tau(X_k)\tau(X_\ell)] =
	E[\tau(X_k)]E[\tau(X_\ell)] 
	\fa k\neq \ell. 
	\]	
	It follows that 
	\begin{align*}
		E\left[ \abs{\sum_{k=1}^n \tau(X_k)}^2\right]
		&= \sum_{k=1}^n E\left[\tau^2(X_k)\right] 
		+ \sum_{k\neq \ell } E[\tau(X_k)]E[\tau(X_\ell)]\\
		&\le \sum_{k=1}^n E\left[\tau^2(X_k)\right]  + \left( \sum_{k=1}^n E[\tau(X_k)]\right)^2.  
	\end{align*}
	Applying this inequality together with the Cauchy--Schwarz and Markov inequalities, we obtain
	\begin{align*}
		E\left[\abs{\sum_{k=1}^n  X_k}\wedge 1 \right]
		&\le E\left[\abs{\sum_{k=1}^n  {\tau}(X_k)}\wedge 1  \right] + P\left( \sum_{k=1}^n X_k \neq \sum_{k=1}^n {\tau}(X_k)\right)\\
		&\le \left(E\left[\abs{\sum_{k=1}^n  {\tau}(X_k)}^2  \right]\right)^{1/2} + P\left( \bigcup_{k=1}^n \{\abs{X_k} > 1\}\right)\\
		&\le\left( \sum_{k=1}^n E\left[\tau^2(X_k)\right]  + \left( \sum_{k=1}^n E[{\tau}(X_k)]\right)^2 \right)^{1/2} 		+ \sum_{k=1}^n E\left[\tau^2(X_k)\right], 
	\end{align*}
	which completes the proof.
\end{proof}

\begin{proposition}\label{pro.RM-cont_of_int_op}
	Let $\Theta\colon \RR \to L_P^0(\Omega;\R)$ be an infinitely divisible random measure. 
	The following are equivalent for a sequence  $(f_k)_{k\in\N}$  in $\mathcal{S}_{\rm det}$: 
	\begin{enumerate}
		\item[{\rm (a)}] $\displaystyle (f_k)_{k\in\N}$ converges to $0$ in $(\I_\Theta,\norm{\cdot}_\Theta)$;
		\item[{\rm (b)}] $\displaystyle \lim_{k\to\infty} \sup_{\gamma \in \mathcal{S}_{\rm det}^1}E\left[\abs{\int_R \gamma f_k \, \d \Theta}\wedge 1 \right]=0. $
	\end{enumerate}
\end{proposition}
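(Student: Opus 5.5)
By Theorem \ref{th.RM-space}, condition (a) is equivalent to $\iota_\Theta(f_k)\to 0$. Both directions therefore reduce to comparing $\iota_\Theta(f_k)$ with the quantity in (b). The starting point is that for $g\in\S_{\rm det}$, say $g=\sum_j \beta_j\1_{A_j}$ with disjoint $A_j$, the random variable $\int g\,\d\Theta=\sum_j\beta_j\Theta(A_j)$ is a sum of independent infinitely divisible variables. By \eqref{eq.RD-multiplied-1} and \eqref{eq.RD-multiplied-2} its characteristics are
\[
a_g=\int_R a_\Theta(r,g(r))\,\chi(\d r),\qquad
\zeta_g=\int_R \zeta_\Theta(r,\abs{g(r)})\,\chi(\d r),
\]
where $\zeta_g$ denotes the combined Gaussian and truncated Lévy part. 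Here I use that $\zeta_\Theta$ is even in $\alpha$, and that $a_\Theta(r,\cdot)$ is odd.

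\textbf{(a)$\Rightarrow$(b).} Fix $\gamma\in\S^1_{\rm det}$ and put $g=\gamma f_k$. I will represent $X=\int g\,\d\Theta$ as $L(1)$ for a Lévy process $L$ with characteristics $(a_g,q_g,\lambda_g)$. Splitting $[0,1]$ into $m$ equal pieces gives i.i.d.\ increments $d_{i,m}$. Lemma \ref{le.independent_sum_control} applied to them, together with Lemma \ref{le.limit_characteristics} as $m\to\infty$, yields
\[
E[\abs{X}\wedge 1]\le \big(\zeta_g+a_g^2\big)^{1/2}+\zeta_g .
\]
Now $\zeta_g\le\int\zeta_\Theta(r,\abs{f_k(r)})\,\chi(\d r)$ since $\abs{\gamma}\le1$ and $\zeta_\Theta$ is monotone in $\alpha$. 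Also $\abs{a_g}\le\int\eta_\Theta(r,\abs{f_k(r)})\,\chi(\d r)$ by the definition of $\eta_\Theta$, using that $a_\Theta$ is odd. Both bounds are uniform in $\gamma$ and are at most $\iota_\Theta(f_k)\to0$, which proves (b).

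\textbf{(b)$\Rightarrow$(a).} This is the main obstacle: I must recover the characteristics from the bounds $\sup_\gamma E[\abs{\int\gamma f_k\,\d\Theta}\wedge1]\to0$. The first step is to note that the family $\{\int\gamma f_k\,\d\Theta:\gamma\in\S^1_{\rm det}\}$ then converges to $0$ in probability uniformly in $\gamma$. For infinitely divisible laws, convergence to $\delta_0$ is equivalent to convergence of the characteristics: $a\to0$ and $q+\int(\abs\beta^2\wedge1)\,\lambda(\d\beta)\to0$. I want a uniform version of this equivalence. I would argue by contradiction. If it failed, there would exist $\gamma_k\in\S^1_{\rm det}$ with characteristics bounded away from $0$ while $\int\gamma_kf_k\,\d\Theta\to0$ in probability, which contradicts the classical convergence theorem for infinitely divisible distributions (e.g.\ Sato, or Kallenberg's theorem on convergence of ID laws) applied to the single sequence $(\int\gamma_k f_k\,\d\Theta)_k$.

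This gives two conclusions. Taking $\gamma=\1$ on the support of $f_k$ gives $\int\zeta_\Theta(r,\abs{f_k})\,\chi(\d r)\to0$. It also gives $\sup_\gamma\abs{\int a_\Theta(r,\gamma f_k)\,\chi(\d r)}\to0$. By Lemma \ref{le.RM-drift-equality}, applied to both $a_\Theta$ and $-a_\Theta$ (replacing $\gamma$ by $-\gamma$ and using oddness), the latter equals $\int\sup_{\abs\beta\le1}\pm a_\Theta(r,\beta f_k(r))\,\chi(\d r)$. Combining the two signs, pointwise $\eta_\Theta=\max$ of the two suprema, which is dominated by their sum since each is $\ge a_\Theta(r,0)=0$. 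Hence $\int\eta_\Theta(r,\abs{f_k})\,\chi(\d r)\to0$, so $\iota_\Theta(f_k)\to0$, and Theorem \ref{th.RM-space} gives (a).
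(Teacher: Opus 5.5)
Your proof is correct and follows essentially the same route as the paper. In (a)$\Rightarrow$(b) you combine Lemma~\ref{le.limit_characteristics} with Lemma~\ref{le.independent_sum_control} through infinitesimal i.i.d.\ decompositions. Using a single L\'evy process for $\int \gamma f_k\,\d\Theta$ instead of $2^m$-th roots of each summand $Y_k$ is an equivalent, slightly simpler variant. In (b)$\Rightarrow$(a) you use the same contradiction argument with the convergence theorem for infinitely divisible laws, followed by Lemma~\ref{le.RM-drift-equality}. Treating the two signs separately is harmless but not needed, since oddness of $a_\Theta(r,\cdot)$ already gives $\sup_{\abs{\beta}\le 1} a_\Theta(r,\beta\alpha)=\eta_\Theta(r,\abs{\alpha})$ directly, as the paper uses.
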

\begin{proof} 
	Assume that $[a_0,q_0,\lambda_0]$ is the characteristics of $\Theta$ with the Radon-Niko\-dym derivatives $a$, $q$ and $\kappa$ according to \eqref{eq.RD-a-q} and \eqref{eq.RD-lambda}.
	
	(a)$\;\Rightarrow\;$(b): 
	for any  $\gamma \in \mathcal{S}^{\rm 1}_{\rm det}$ and $f\in \mathcal{S}_{\rm det}$, we can  assume that the pointwise product	$\gamma f$ is of the form 
	\begin{align}\label{eq.gamma-f}
		\gamma(r) f(r)=\sum_{k=1}^{n}\gamma_k \beta_{k}\1_{A_k}(r) \fa r\in R,
	\end{align}
	where $\gamma_k\in [-1,1]$, $\beta_{k} \in \R$ and $A_{k}$ are disjoint sets in $\RR$ for $k=1,\dots, n$.
	For each $k \in \{1,\dots, n \}$, define an infinitely divisible real random variable by $Y_{k}:=\gamma_{k}\beta_{k}\Theta(A_k)$. It follows from \eqref{eq.multiplied-char}, that its characteristics $(a_k,q_k,\lambda_k)$ satisfies
	\begin{align*}
		& a_k= 
		\int_{A_k} a_\Theta(r,\gamma_k\beta_k)\,\chi(\d r), \qquad 
		q_k=\int_{A_k}\gamma_k^2\beta_k^2 q(r)\, \chi(\d r), \\
		& \int_{\R} \left( \abs{\beta}^2\wedge 1\right)\, \lambda_k(\d \beta)
		= \int_{A_k} \int_{\R} \left( \abs{\gamma_k\beta_k \beta}^2\wedge 1\right)\, \kappa(r, \d \beta)\, \chi(\d r). 
	\end{align*}
	Choose, for each $k=1,\dots, n$ and $m\in\N$, a sequence $X_m^k:=(X^{k}_{m,\ell})_{\ell=1,\dots, 2^m}$ of independent, identically distributed random variables $X^{k}_{m,\ell}$ such that their sum has the same distribution as $Y_k$; in other words, $X_{m,1}^k$ is the $2^m$-th root of $Y_k$. 
	It follows from Lemma \ref{le.limit_characteristics} that 
	\begin{align*}
		\lim_{m\to\infty} \abs{\sum_{k=1}^n \sum_{\ell=1}^{2^m} E\left[\tau(X^{k}_{m,\ell})\right] }
		&=\abs{\sum_{k=1}^n a_k }
		=\abs{\int_{R} a_\Theta(r, \gamma(r)f(r))\, \chi(\d r)}\le \iota_\Theta(f),\\
		\intertext{and, in a similar way, }
		\lim_{m\to\infty} \sum_{k=1}^n\sum_{\ell=1}^{2^m} E\left[\tau^2(X^{k}_{m,\ell})\right]
		&=\int_{R} \zeta_{\Theta} (r, \gamma(r)f(r))\, \chi(\d r)\le \iota_\Theta(f). 
	\end{align*}
	Since for fixed $m$,  the sequences $X_m^1,\dots, X_m^n$ can be chosen independently, 
	we conclude by applying Lemma~\ref{le.independent_sum_control} for fixed $m$ and then taking the limit in $m$ that 
	\begin{align*}
		\sup_{\gamma\in \S^{\rm 1}_{\rm det}} E\left[\abs{\int_R \gamma f\, \d \Theta}\wedge 1 \right]
		&=\sup_{\gamma\in \S^{\rm 1}_{\rm det}} E\left[\abs{\sum_{k=1}^n \sum_{\ell=1}^{2^m} X_{m,\ell}^{k} }\wedge 1 \right] \\
		&\le \left( \iota_\Theta(f)+ (\iota_{\Theta}(f))^2\right)^{1/2} + \iota_\Theta(f),
	\end{align*}
	which completes the proof of this implication by Theorem \ref{th.RM-space}.
	
	(b)$\;\Rightarrow\;$(a): 
	for $\gamma\in \S_{\rm det}^{1}$ and $f\in \S_{\rm det}$ let $\gamma f$ be of the form \eqref{eq.gamma-f} and 
	$Y_{\ell}:=\gamma_{\ell}\beta_{\ell}\Theta(A_\ell)$. Define the  integral
	\[ I_\Theta(\gamma f):=\int_R \gamma f\, \d\Theta=\sum_{\ell=1}^{n} Y_\ell, \]
	and denote its characteristics by $(a_I, q_I, \lambda_I)$. Since $I_\Theta(\gamma f)$ is the sum of the independent infinitely divisible random variables $Y_\ell$, its characteristics satisfies 
	\begin{align*}
		\int_{\R} \left(\abs{\beta}^2\wedge 1\right)\, \lambda_I(\d \beta) + q_I
		&= \int_R \zeta_\Theta (r,\gamma(r)f(r))\, \chi(\d r) ,\\
		a_I&= \int_R a_\Theta(r, \gamma(r)f(r))\, \chi(\d r). 
	\end{align*}
	Thus, if the hypothesis under (b) is satisfied by a sequence $(f_k)_{k\in\N}\subseteq \S_{\rm det}$,  
	Remark VII.2.10 in \cite{JacodShiryaev} guarantees by taking $\gamma\equiv 1$ on the support of $f_k$ that 
	\begin{align} 
	\lim_{k\to\infty}\int_R \zeta_\Theta(r, \abs{f_k(r)})\,\chi(\d r)=	\lim_{k\to\infty}\int_R \zeta_\Theta(r, f_k(r))\,\chi(\d r)= 0.\label{eq.RM-aux1}
	\end{align}
	In order to establish 
	\begin{align}\label{eq.RM-aux3}
		\lim_{k\to\infty}\sup_{\gamma\in \S_{\rm det}^{1}}\int_R a_\Theta(r, \gamma(r) f_k(r))\, \chi(\d r)=0,
	\end{align}
	assume for a contradiction, by passing to a suitable subsequence if necessary, that  there exist an $\epsilon>0$ and a sequence $(\gamma_k)_{k\in\N}\subseteq \S_{\rm det}^{1}$ satisfying for all $k\in\N$ that
	\[
	\int_R a_\Theta(r,\gamma_k(r)f_k(r))\, \chi(\d r)>\epsilon. 
	\] 
	Since the hypothesis implies that $I_\Theta(\gamma_k f_k)\to 0$ weakly,  the characterisation of weak convergence of infinitely divisible measures, see e.g.\ \cite[Th.\ VII.2.9]{JacodShiryaev}, guarantees that 
	\[
	\lim_{k\to\infty} \int_R a_\Theta(r, \gamma_k(r)f_k(r))\, \chi(\d r)=0.
	\]
	Thus, we reach a contradiction, which proves \eqref{eq.RM-aux3}. Since $\beta\mapsto a_\Theta(r,\beta)$ is an odd function for each $r\in R$, we conclude from \eqref{eq.RM-aux3} by  Lemma \ref{le.RM-drift-equality}  that
	\begin{align}
	\lim_{k\to\infty}\int_R \eta_\Theta(r, \abs{f_k(r)})\, \chi(\d r)=	\lim_{k\to\infty}\sup_{\gamma\in \S_{\rm det}^{1}}\int_R a_\Theta(r, \gamma(r) f_k(r))\, \chi(\d r)=0.
	\end{align}
	This together with   \eqref{eq.RM-aux1} establishes $\iota_\Theta(f_k)\to 0$.
\end{proof}

\begin{proof}[Proof of Theorem \ref{th.RM-det_if_and_only_if_integrable}]
	If $f\colon R\to\R$ is $\Theta$-integrable, then Definition \ref{de.integral-det-RM} guarantees that there exists a sequence $(f_k)_{k \in \mathbb{N}}$ of simple functions in $ \mathcal{S}_{\rm det}^{}$ such that $f_k \rightarrow f$ $\chi$-almost everywhere and  $\sup_{\gamma \in \mathcal{S}_{\rm det}^{1}}E[\abs{I_\Theta(\gamma(f_k-f_\ell))}\wedge 1] \rightarrow 0$. Proposition \ref{pro.RM-cont_of_int_op} implies that $f_k-f_\ell \rightarrow 0$ in $(\I_\Theta,\norm{\cdot}_\Theta)$. Completeness of the space $\I_\Theta$ and its continuous embedding into $L_\chi^0(R;\R)$  allow us to conclude that $f \in \I_\Theta$.
	
	Conversely, if  $f \in \I_\Theta $, then we conclude from Theorem \ref{th.RM-space}  that there exists a sequence $(f_k)_{k \in \mathbb{N}}$ of elements in $\mathcal{S}_{\rm det}$ converging to $f$ $\chi$-almost everywhere and in $(\I_\Theta,\norm{\cdot}_\Theta)$. Applying Proposition  \ref{pro.RM-cont_of_int_op} completes the proof.
\end{proof}

\section{Tangent sequences and decoupling inequalities} \label{se.decoupling}

The technique of constructing decoupled tangent sequences is a powerful tool to derive strong results on a sequence of possibly dependent random variables. In this section, we briefly recall the fundamental definition and result, see e.g.\   de la Pe\~{n}a and Gin\'{e} \cite{Pena-Gine} and Kwapie{\'n} and Woyczy{\'n}ski \cite{Kwapien-Woyczynksi}.

\begin{definition}\label{de.decoupled}
	Let $\big(\Omega, \mathcal{F}\!,P,(\mathcal{F}_k)_{k\in{\mathbb N}_0}\big)$ be
	a filtered probability space and $(X_k)_{k \in \mathbb{N}}$ an adapted sequence of real-valued random variables. If $\big(\Omega', \mathcal{F}',P',(\mathcal{F}'_k)_{k\in{\mathbb N}_0}\big)$ is another filtered probability space, then a sequence $(Y_k)_{k \in \mathbb{N}}$ of real-valued random variables defined on $\big(\Omega \times \Omega^\prime, \mathcal{F}\otimes \mathcal{F}', P \otimes P', (\mathcal{F}_k\otimes \mathcal{F}'_k)_{k\in{\mathbb N}_0}\big)$ is said to be a decoupled tangent sequence to $(X_k)_{k \in \mathbb{N}}$ if
	\begin{enumerate}
		\item[{\rm (a)}]     $(Y_k(\omega,\cdot))_{k \in \mathbb{N}}$ is a sequence of independent random variables on $(\Omega',\mathcal{F}',P')$ for each $\omega \in \Omega$;
		\item[{\rm (b)}]  the sequences $(X_k)_{k \in \mathbb{N}}$ and $(Y_k)_{k \in \mathbb{N}}$ satisfy for each $k\in {\mathbb N}$ that
		\[\mathcal{L}(X_k \vert \mathcal{F}_{k-1}\otimes \mathcal{F}'_{k-1})=\mathcal{L}(Y_k \vert \mathcal{F}_{k-1}\otimes \mathcal{F}'_{k-1}) \quad P \otimes P'\!-\!\text{almost surely. }\]
	\end{enumerate}
\end{definition}

In Definition \ref{de.decoupled} and throughout, we identify a random variable $X\colon \Omega \to \R$ with its canonical extension to the product space $\Omega\times \Omega^\prime$, defined by $X(\omega,\omega') := X(\omega)$ for all $\omega\in \Omega$ and $\omega^\prime\in\Omega^\prime$.

A typical example arising from integration with respect to a process with independent increments is the following:
\begin{example}\label{ex.decoupling-independent}
		Let $(\xi_k)_{k\in\N}$ be a sequence of independent, real-valued random variables and set $\F_k:=\sigma(\xi_1,\dots, \xi_k)$. 
	Let $(\xi_k^\prime)_{k\in\N}$ be an independent copy of $(\xi_k)_{k\in\N}$ defined on a second probability space $(\Omega',\F',P')$ and set $\F_k':=\sigma(\xi_1',\dots,\xi_k')$.
	Let $(\psi_k)_{k\in\N}$ be a sequence of $\F_{k-1}$-measurable, real-valued random variables and set $X_k:=\psi_k\xi_k$. 
	Consider the product space as in Definition \ref{de.decoupled} and define $Y_k:=\psi_k\xi_k^\prime$. 
	Then $(Y_k)_{k\in\N}$ is a decoupled tangent sequence to $(X_k)_{k\in\N}$. Indeed, for each fixed $\omega\in\Omega$, the sequence $(Y_k(\omega,\cdot))_{k\in\N}$ is independent on $(\Omega',\F',P')$. Moreover, for every bounded Borel measurable function $f\colon \R\to\R$, since $\psi_k$ is $\F_{k-1}\otimes \F_{k-1}'$-measurable and $\xi_k$ is independent of $\F_{k-1}\otimes \F_{k-1}'$, we have
	\[
	E\big[f(X_k)\mid \F_{k-1}\otimes \F_{k-1}'\big]
	=
	E\big[f(\psi_k\xi_k)\mid \F_{k-1}\otimes \F_{k-1}'\big]
	=
	\int_{\R} f(\psi_k\beta)\,\mathcal{L}(\xi_k)(\d\beta),
	\]
	and similarly
	\[
	E\big[f(Y_k)\mid \F_{k-1}\otimes \F_{k-1}'\big]
	=
	E\big[f(\psi_k\xi_k')\mid \F_{k-1}\otimes \F_{k-1}'\big]
	=
	\int_{\R} f(\psi_k\beta)\,\mathcal{L}(\xi_k')(\d\beta).
	\]
	Since $\xi_k'$ is an independent copy of $\xi_k$, we have $\mathcal{L}(\xi_k')=\mathcal{L}(\xi_k)$, which verifies Condition~(b) in Definition \ref{de.decoupled}.
\end{example}
In the following theorem and subsequently, we write $E_P$ and $E_{P\otimes P'}$ to indicate the underlying probability measure with respect to which the expectation is taken.
\begin{theorem}\label{th.decoupling}
	Let $\big(\Omega, \mathcal{F},P,(\mathcal{F}_k)_{k\in{\mathbb N}_0}\big)$ and $\big(\Omega', \mathcal{F}',P',(\mathcal{F}'_k)_{k\in{\mathbb N}_0}\big)$ be filtered probability spaces. 
	Then there exist constants $c_1,c_2>0$ such that, for every $n\in\N$, every adapted sequence 
	$(X_k)_{k\in\N}$ of real random variables on $\big(\Omega, \mathcal{F},P,(\mathcal{F}_k)_{k\in{\mathbb N}_0}\big)$ with corresponding decoupled tangent sequence $(Y_k)_{k\in\N}$ on $\big(\Omega \times \Omega^\prime, \mathcal{F}\otimes \mathcal{F}', P \otimes P', (\mathcal{F}_k\otimes \mathcal{F}'_k)_{k\in{\mathbb N}_0}\big)$ satisfy:
	\begin{enumerate}
		\item[{\rm (a)}]   $\displaystyle
		E_P\left[\left|\sum_{k=1}^n X_k\right|\wedge 1\right]\leq c_1 E_{P\otimes P^\prime}\left[\left|\sum_{k=1}^n Y_k\right|\wedge 1\right]$;
		\item[{\rm (b)}] $\displaystyle
		E_{P\otimes P^\prime}\left[\left|\sum_{k=1}^n Y_k\right|\wedge 1\right]\leq c_2 \sup_{\epsilon_1,\dots,\epsilon_n \in \{\pm 1\}}E_P\left[\left|\sum_{k=1}^n \epsilon_k X_k\right|\wedge 1\right]. $
	\end{enumerate}
\end{theorem}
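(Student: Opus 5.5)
The plan is to compare both sides of (a) and (b) with the same predictable, ``conditional characteristic'' quantities. By tangency these quantities are common to $(X_k)$ and $(Y_k)$. For $k\le n$ put
\[
b_k:=E\big[\tau(X_k)\,\big|\,\F_{k-1}\big]=E_{P\otimes P'}\big[\tau(Y_k)\,\big|\,\F_{k-1}\otimes\F'_{k-1}\big],\qquad
v_k:=E\big[\tau^2(X_k)\,\big|\,\F_{k-1}\big],
\]
and set $B_m:=\sum_{k\le m}b_k$ and $V_m:=\sum_{k\le m}v_k$. These are predictable processes. For $P$-almost every $\omega$ they are also the truncated means and second moments of the independent variables $Y_k(\omega,\cdot)$ under $P'$.

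The first step handles independent sums in both directions. Lemma~\ref{le.independent_sum_control}, applied for fixed $\omega$, gives
\[
E_{P'}\Big[\Big|\sum_{k\le n}Y_k(\omega,\cdot)\Big|\wedge 1\Big]\le \big(V_n+B_n^2\big)^{1/2}+V_n .
\]
For the reverse direction I need a lower bound on $E_{P'}[|\sum Y_k|\wedge1]$ in terms of $\min\{1,V_n+\max_m|B_m|\}$, possibly after replacing $V_n$ by a centred variance. I would prove this by symmetrisation with an independent copy together with L\'evy's maximal inequality, which controls the truncated variance. The drift is then recovered from $|\sum Y_k|\ge|B_n|-|\sum(\tau(Y_k)-b_k)|-(\text{large-jump part})$. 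Along the way I would use the elementary equivalence $E[|Z|\wedge1]\asymp\int_0^1P(|Z|>t)\,\d t$.

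For (a), I stop at $\rho:=\inf\{m: V_{m+1}+|B_{m+1}|>\delta\}$. This is a stopping time because $V$ and $B$ are predictable. On $\{\rho\ge n\}$ I decompose
\[
\sum_{k\le n}X_k=\sum_{k\le n}\big(\tau(X_k)-b_k\big)+B_n+\sum_{k\le n}\big(X_k-\tau(X_k)\big).
\]
The first sum is a martingale with predictable quadratic variation at most $V$, so Doob's inequality gives an $L^2$ bound by $\delta$. The second term has size at most $\delta$. The third vanishes unless some $|X_k|>1$, and that probability is at most $E[V_\rho]$ by the conditional Markov inequality. The event $\{\rho<n\}$ has probability at most $P(V_n+\max_m|B_m|>\delta)$. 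This is bounded by $c\,E_{P\otimes P'}[|\sum Y_k|\wedge1]$, using the lower bound from the first step for fixed $\omega$ and Markov's inequality. Choosing $\delta$ of order $E_{P\otimes P'}[|\sum Y_k|\wedge1]$, which is the optimal level in a good-$\lambda$ fashion, yields (a).

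For (b), the upper bound from the first step integrated over $\omega$, again with the same stopping/Lenglart device, reduces matters to controlling $P(V_n+\max_m|B_m|>\delta)$ by $\sup_{\epsilon}E_P[|\sum\epsilon_kX_k|\wedge1]$. To control $V_n$, I average over independent Rademacher signs $(\epsilon_k)$; the average is bounded by the supremum over sign patterns. The sequence $(\epsilon_kX_k)$ is conditionally symmetric, so its truncated conditional mean vanishes and its conditional variance is $V$. A reverse Lenglart argument, namely the martingale converse via the square function of a conditionally symmetric sequence, should bound $P(V_n>\delta)$.

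The main obstacle is the drift $\max_m|B_m|$, which symmetrisation destroys. Here I would use the freedom to fix deterministic signs: on the predictable set where $b_k>0$ one chooses $\epsilon_k=1$, and where $b_k<0$ one chooses $\epsilon_k=-1$. Since the signs must be deterministic, this has to be done via a decomposition of the sum into pieces where $\sum X_k$ and $\sum\epsilon_kX_k$ separate the drift from the symmetric part, i.e.\ using $\tfrac12(\sum X_k\pm\sum\epsilon_kX_k)$. This is the delicate point, and I would follow the conditioning/decoupling argument of Kwapie\'n and Woyczy\'nski \cite{Kwapien-Woyczynksi}. The constants $c_1,c_2$ come out universal because every estimate above involves only absolute constants.
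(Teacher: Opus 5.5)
The paper gives no argument of its own here; it quotes Propositions 5.7.1(ii) and 5.7.2 of Kwapie\'n and Woyczy\'nski. Your proposal tries to be self-contained, but it has genuine gaps.

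\textbf{Part (b) is not proved.} The step you call decisive, the drift, is handed back to that same reference. The ``reverse Lenglart'' bound on $V_n$ is only asserted. So (b) is at best reduced to the citation the paper uses.

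\textbf{Part (a) fails at a concrete point.} To control $P(\rho<n)$ you need
$E_{P'}[|\sum_kY_k|\wedge1]\gtrsim\min\{1,V_n+\max_m|B_m|\}$ for independent sums. This is false, even with a centred variance.
\begin{itemize}
\item Take the deterministic sequence $X_k=Y_k$ with $X_1=\tfrac12$, $X_2=-\tfrac12$. Both sides of (a) vanish, yet $V_1+|B_1|=\tfrac34$. So $\rho=0<n$ for every $\delta<\tfrac34$.
\item With $\pm2$ in place of $\pm\tfrac12$, the event $\{\max_k|X_k|>1\}$, which you control through $V$, has probability one while both sums vanish.
\end{itemize}
You therefore cannot stop on the drift or on uncentred truncated moments. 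The increments must be centred at predictable centres, for example conditional medians. The accumulated drift then only needs to be controlled at time $n$. There it is the same $\F_{n-1}$-measurable quantity in $\sum_kX_k$ and, for fixed $\omega$, in $\sum_kY_k(\omega,\cdot)$. It is small as soon as $\sum_kY_k$ and the centred fluctuations are small.

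\textbf{The constants $c_1,c_2$ cannot come out of this scheme, even after that repair.} Lemma~\ref{le.independent_sum_control} and Doob's inequality work through $\sqrt V$.
\begin{itemize}
\item For $n=1$ and $X_1=\pm1$ with probability $q/2$ each (and $0$ otherwise), every quantity in (a) and (b) equals $q$, but $\sqrt{V_1}=\sqrt q$.
\item Even granting a lower bound of your type, put $\theta:=E_{P\otimes P'}[|\sum_kY_k|\wedge1]$. Markov only gives $P(\rho<n)\lesssim\theta/\delta$, which is $O(1)$ for $\delta\asymp\theta$. The best trade-off against the $\sqrt\delta$ from Doob is of order $\theta^{1/3}$.
\end{itemize}
Truncated first and second moments at a single scale do not determine the $L^0$ size of an independent sum: rare jumps count linearly, many small ones like a square root. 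A linear inequality needs a level-by-level comparison $P(|\sum_kX_k|>t)\le C\,P(|\sum_kY_k|>t/C)$ or a genuine good-$\lambda$ argument. That is the real content of the cited propositions.

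A repaired version of your approach would plausibly give a universal modulus $\phi$ with $\phi(0+)=0$ in place of $c_1,c_2$. For (b) this needs no sign alignment: $\epsilon\equiv1$ controls $\sum_kX_k$, random signs control the fluctuations, and the drift is their difference. Such a modulus would suffice for every use in Proposition~\ref{pro.RM-small-if-small}, but it is weaker than the statement.

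\textbf{A missing hypothesis.} You identify $b_k(\omega),v_k(\omega)$ with the $P'$-moments of $Y_k(\omega,\cdot)$. This uses the conditional-independence requirement $\mathcal L_{P'}(Y_k(\omega,\cdot))=\mathcal L(X_k\mid\F_{k-1})(\omega)$ from the Kwapie\'n--Woyczy\'nski definition, which Definition~\ref{de.decoupled} lacks. Without it the statement fails. Let $\F_0,\F_0'$ be trivial, let $\xi$ be an $\F_1$-measurable Rademacher variable, and set $X_1=X_2=\xi$. Then $Y_1=-\xi$, $Y_2=\xi$ satisfies Definition~\ref{de.decoupled} but gives $\sum_kY_k=0$, so (a) fails. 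The requirement does hold in Example~\ref{ex.decoupling-independent}.
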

\begin{proof}
	See \cite[Prop.\ 5.7.1.(ii)]{Kwapien-Woyczynksi} and \cite[Prop.\ 5.7.2]{Kwapien-Woyczynksi}.
\end{proof}

\section{Integral for predictable integrands}\label{se.integration-predictable}

In this section, we introduce a stochastic integral for random integrands with respect to an infinitely divisible random measure. To this end, it is necessary to distinguish a temporal component and to work within a filtration-based framework.
Accordingly, we endow the probability space $(\Omega,\mathcal F,P)$ with a filtration $\{\F_t\}_{t\ge 0}$. 
Throughout this section, let $(\Omega',\mathcal F',P')$ be a copy of $(\Omega,\mathcal F,P)$, and let $(\mathcal F'_t)_{t\ge0}$ denote the corresponding filtration. Product spaces are formed with respect to these copies.

Let $\P$ denote the predictable $\sigma$-algebra on $\Omega\times [0,T]$ for some $T>0$, and let $\O\subseteq \R^d$ be a Borel set. A stochastic process
$\Xi\colon \Omega\times [0,T]\times \O\to \R$ 
is called predictable if it is measurable with respect to $\P\otimes \Borel(\O)$.

For $D\in\Borel(\R^n)$, let $\Borel_b(D)$ denote the $\delta$-ring on $D$ of all sets $A\in\Borel(\R^n)$ such that $A\cap D$ has finite Lebesgue measure. 
We extend the notion of an infinitely divisible random measure to include a temporal domain as follows:
\begin{definition} \label{de.Levy-space-time}
	Let $\O$ be in $\Borel(\R^d)$. 
	An infinitely divisible random measure $\Theta\colon \Borel_b(\Rp\times \O) \to L_P^0(\Omega;\R)$ is called a L\'evy basis if: 
	\begin{enumerate}
		\item[{\rm (a)}] for every $A\in\Borel_b(\Rp\times \O)$ for which there exists $t\ge 0$ such that $A\subseteq [0,t]\times \O$, 
		the random variable $\Theta(A)$ is $\F_t$-measurable; 
		\item[{\rm (b)}] for every $A\in\Borel_b(\Rp\times \O)$ for which there exists $t\ge 0$ such that $A\subseteq (t,\infty)\times \O$,  
		the random variable $\Theta(A)$ is independent of $\F_t$;
		\item[{\rm (c)}] $\Theta(\{t\}\times [-n,n]^d\cap\O)=0$ $P$-a.s.\ for all $t\ge 0$ and $n\in\N$.
	\end{enumerate}
\end{definition}

Conditions (a) and (b) in Definition \ref{de.Levy-space-time} correspond to the usual conditions of adaptedness and independent increments. Condition (c) ensures that the noise has no fixed times of discontinuity.

Throughout, we fix a finite time horizon $T>0$. Although the filtration and the Lévy basis are defined on $[0,\infty)$, all predictable integrands and stochastic integrals are considered on $[0,T]$.
 As in the case of deterministic integrands, we begin by introducing two classes of functions on which our definition of the stochastic integral will be based.
\begin{definition} \hfill 
	\begin{enumerate}
		\item[{\rm (a)}] A real-valued predictable step process $\Xi \colon \Omega \times [0,T]\times \O\to \R$ is said to be of the form 
		\begin{align} \label{eq.RM-step-HS}
			\Xi(\omega, t,r)=\sum_{k=0}^{n-1} \xi_k(\omega)  \mathbb{1}_{ (t_k,t_{k+1}]}(t)\mathbb{1}_{A_k}(r),
		\end{align}
		where $0=t_0<\cdots < t_{n}=T$, $\xi_k$ is an $\F_{t_k}$-measurable real-valued random variable, and 
		$A_k\in \Borel_b(\O)$ for $k=0,\dots, n-1$. The space of all real-valued predictable step processes is denoted by $\mathcal{S}_{\rm prd}$. 
		\item[{\rm (b)}] The subspace of all real-valued predictable step processes $\Gamma$ satisfying 
		\begin{align*}
		\esssup_{\omega\in\Omega}	\sup_{(t,r)\in  [0,T]\times \O}\abs{\Gamma(\omega,t,r)}\leq 1
		\end{align*}
		is denoted by  $\mathcal{S}_{{\rm prd}}^{1}$. 
	\end{enumerate}
\end{definition}

 Let $\Xi \in \mathcal{S}_{\rm prd}$ be of the form \eqref{eq.RM-step-HS}. 
 The stochastic integral of $\Xi$ with respect to the L\'evy basis $\Theta$ is defined by
 the real-valued random variable 
 \begin{align}\label{eq.def-int}
 I(\Xi):=\int_{(0,T]\times \O} \Xi \, \d\Theta 
 :=\sum_{k=0}^{n-1} \xi_k \Theta((t_k,t_{k+1}]\times A_k).
 \end{align}
 This definition is independent of the chosen representation by finite additivity of $\Theta$ and the independence assumptions.
  \begin{definition} \label{de.RM-pred_integrability}
	Let $\Theta\colon \Borel_b(\Rp\times \O) \to L_P^0(\Omega;\R)$ be a L\'evy basis with control measure $\chi$. 
	A predictable process $\Xi \colon \Omega \times [0,T]\times \O\to \R$ is called integrable with respect to $\Theta$ if there exists a sequence $(\Xi_k)_{k \in \mathbb{N}}$ of processes in $\mathcal{S}_{\rm prd}$ such that
	\begin{enumerate}
		\item[{\rm (a)}] $(\Xi_k)_{k \in \mathbb{N}}$ converges $P\otimes \chi$-almost everywhere to $\Xi$;
		\item[{\rm (b)}]  $\displaystyle \lim_{k,\ell \rightarrow \infty}\sup_{\Gamma \in \mathcal{S}_{{\rm prd}}^{1}}E\Bigg[\abs{\int_{(0,T]\times \O} \Gamma(\Xi_k-\Xi_\ell) \;{\rm d}\Theta}\wedge1  \Bigg]=0.$
	\end{enumerate}
 	In this case,  the stochastic integral of $\Xi$ is defined by
\[I_\Theta(\Xi):=\int_{(0,T]\times \O} \Xi \;{\rm d}\Theta= \lim_{k\rightarrow \infty} \int_{(0,T]\times \O} \Xi_k \;{\rm d}\Theta \quad \text{in}\;L_P^0(\Omega;\R).\]
\end{definition}

By choosing $\Gamma \equiv 1$ on the support of $\Xi_k-\Xi_\ell$ in Part (b) of Definition \ref{de.RM-pred_integrability}, we obtain that $\big(\int \Xi_k\,{\rm d}\Theta\big)_{k\in\N}$ is a Cauchy sequence in probability, and hence converges in $L_P^0(\Omega;\R)$. Moreover, it can be shown by standard arguments that the limit does not depend on the choice of the approximating sequence $(\Xi_k)_{k\in\N}$.

For an integrable process $\Xi \colon \Omega \times [0,T]\times \O\to \R$, the stochastic integral process $\big(\int_{(0,t]\times\O} \Xi\, \d \Theta:\, t\in [0,T]\big)$ is defined by
\[
\int_{(0,t]\times\O} \Xi \, \d \Theta:=\int_{(0,T]\times \O} \1_{(0,t]}\Xi\, \d \Theta. 
\]
It follows immediately from Definition \ref{de.RM-pred_integrability} that the integral process is well defined. Moreover, Lemma \ref{le.Emery-ucp} below implies that the integral process has c\`adl\`ag paths.

\begin{lemma}\label{le.Emery-ucp}
	Let $\Theta\colon \Borel_b(\Rp\times \O) \to L_P^0(\Omega;\R)$ be a L\'evy basis and  $\Xi\in \mathcal S_{\rm prd}$. Then, for every $c>0$ ,
	\[
	\sup_{\Gamma\in\mathcal S_{\rm prd}^{1}}
	P\left(
	\sup_{t\in[0,T]}\abs{\int_{(0,t]\times\O}\Gamma\Xi\,{\rm d}\Theta}\ge c
	\right)
	\le
	\sup_{\Gamma\in\mathcal S_{\rm prd}^{1}}
	P\left(
	\abs{\int_{(0,T]\times\O}\Gamma\Xi\,{\rm d}\Theta}\ge c
	\right).
	\]
\end{lemma}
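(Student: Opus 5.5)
The idea is a stopping-time argument: the first time the integral process exceeds $c$ in absolute value is encoded in a new integrand $\Gamma'\in\mathcal S^1_{\rm prd}$, so that the event $\{\sup_t|\cdot|\ge c\}$ is contained in $\{|\int_{(0,T]\times\O}\Gamma'\Xi\,\d\Theta|\ge c\}$. Since $\Gamma$ and $\Xi$ are step processes, the integral process is explicit. We may take a common time partition $0=t_0<\dots<t_n=T$ and write
\[
\Gamma\Xi=\sum_{k}\sum_{j}\zeta_{k,j}\1_{(t_k,t_{k+1}]}\1_{A_{k,j}}
\]
with disjoint $A_{k,j}\in\Borel_b(\O)$ and $\F_{t_k}$-measurable $\zeta_{k,j}$. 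Then
\[
M_t:=\int_{(0,t]\times\O}\Gamma\Xi\,\d\Theta=\sum_{k,j}\zeta_{k,j}\,\Theta\big((t_k,t_{k+1}\wedge t]\times A_{k,j}\big),
\]
where intervals with $t\le t_k$ are empty.

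First, I will fix a countable dense set $D\subseteq[0,T]$ containing $T$ and all $t_k$. It suffices to bound $P(\sup_{t\in D_m}|M_t|>c')$ for finite sets $D_m\uparrow D$ and then let $m\to\infty$ and $c'\uparrow c$. This step needs a càdlàg version of $M$, or at least right continuity in probability along $D$. Each process $t\mapsto\Theta((t_k,t_{k+1}\wedge t]\times A)$ is an additive process that is continuous in probability by Definition \ref{de.Levy-space-time}(c). It therefore has a càdlàg modification, and we work with that version. Passing from $\ge c$ to $>c'$ with $c'<c$ and then letting $c'\uparrow c$ handles the inequality type, though the left side is not exactly monotone in $c$. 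I would instead note that $\{\sup\ge c\}\subseteq\{\sup>c'\}$, which gives the bound with the right side at level $c'$. The right-hand side at level $c'$ tends to the right-hand side at level $c$ only if it is left-continuous in $c$. To avoid this, I will run the stopping argument directly with $\ge c$ on the finite grid $D_m$, where the stopping time attains the level, and then use $\{\sup_{D}|M|\ge c\}\supseteq$ the event for $[0,T]$. The latter holds by right continuity up to a boundary issue: if $|M_t|\ge c$ only at some $t\notin D$, right limits along $D$ still give values $\ge c$.

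Second, on a finite grid $D_m=\{s_0<\dots<s_N\}$, refined to contain all $t_k$, I define the stopping time
\[
\sigma:=\min\{s_i\in D_m:|M_{s_i}|\ge c\}\wedge T.
\]
I then set $\Gamma':=\Gamma\1_{(0,\sigma]}$. Because $\{\sigma\ge s_{i+1}\}=\{|M_{s_0}|,\dots,|M_{s_i}|<c\}\in\F_{s_i}$, the process $\1_{(0,\sigma]}(t)=\sum_i\1_{\{\sigma\ge s_{i+1}\}}\1_{(s_i,s_{i+1}]}(t)$ is a predictable step process. Hence $\Gamma'\in\mathcal S^1_{\rm prd}$ after refining partitions, and the coefficients remain $\F_{s_i}$-measurable.

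Third, by the definition \eqref{eq.def-int} and linearity on a common refinement, $\int_{(0,T]\times\O}\Gamma'\Xi\,\d\Theta=M_\sigma$. On the event $\{\max_{D_m}|M|\ge c\}$ we have $|M_\sigma|\ge c$, so
\[
P\big(\max_{D_m}|M|\ge c\big)\le P\Big(\Big|\int\Gamma'\Xi\,\d\Theta\Big|\ge c\Big)\le\sup_{\Gamma''}P\Big(\Big|\int\Gamma''\Xi\,\d\Theta\Big|\ge c\Big).
\]
Letting $m\to\infty$ by monotone continuity and then taking the supremum over $\Gamma$ gives the claim.

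The main obstacle is the passage from the countable grid to $\sup_{t\in[0,T]}$. This step needs the càdlàg modification, which relies on no fixed jumps (Condition (c)) and independent increments. It also needs the observation that with right-continuous paths, $\sup_{[0,T]}|M|=\sup_D|M|$, while the event $\{\sup_D|M|\ge c\}$ is the increasing union over $m$ of the finite-grid events. The algebraic step identifying $M_\sigma$ as the integral of a step integrand is routine.
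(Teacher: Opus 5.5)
Your stopping construction is the same device the paper uses: there $B_i=\{\sup_{k\le i}|S_k|<c\}\in\F_{t_i}$, $\widetilde\Gamma=\sum_i\Gamma\1_{B_i}\1_{(t_i,t_{i+1}]}\1_{A_i}$ and $\sum_i\1_{B_i}Z_i=S_\rho$. You are also right that more is needed to reach $\sup_{t\in[0,T]}$. The paper stops only at the partition points of $\Gamma\Xi$ and then passes directly to the supremum over all $t$. Your grid refinement and c\`adl\`ag version address exactly this point, which the paper leaves implicit.

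However, your passage from the grids to the continuum fails at the closed level $\ge c$. The events $\{\max_{D_m}|M|\ge c\}$ increase to $\{\exists\, d\in D:|M_d|\ge c\}$, not to $\{\sup_D|M|\ge c\}$. The difference is the event on which $\sup_D|M|=c$ is attained at no point of $D$: the level is hit only at times outside $D$, or only approached as a left limit at a jump. Right-continuity gives $|M_{d_n}|\to|M_t|$ along $d_n\downarrow t$, but not $|M_{d_n}|\ge c$ for some $n$.

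Concretely, take $\O=[0,1]$, $T=2$, the deterministic L\'evy basis $\Theta(\d t,\d x)=\tfrac{c\pi}{2}\cos(\tfrac{\pi t}{2})\,\d t\,\d x$, and $\Gamma=\Xi=\1_{(0,2]}\1_{\O}$. Then $M_t=c\sin(\pi t/2)$, so $\sup_{[0,T]}|M|=c$ surely. Yet every grid event is empty whenever $1\notin D$. Here you could add $1$ to $D$, but if the level is touched at a random time with diffuse law, no fixed countable $D$ helps. As written, your argument only proves
$P(\exists\, d\in D:|M_d|\ge c)\le\sup_{\Gamma'}P(|\int\Gamma'\Xi\,\d\Theta|\ge c)$, which is weaker than the claim.

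What does go through is the strict-level version. By right-continuity, $\{\sup_{[0,T]}|M|>c\}=\bigcup_m\{\max_{D_m}|M|>c\}$. Stopping at $\sigma=\min\{s_i\in D_m:|M_{s_i}|>c\}\wedge T$ then gives
$P(\sup_{t\in[0,T]}|M_t|>c)\le\sup_{\Gamma'\in\S^1_{\rm prd}}P(|\int_{(0,T]\times\O}\Gamma'\Xi\,\d\Theta|>c)$.
Hence the stated inequality holds with $c$ on the right-hand side replaced by any $c'<c$. As you observe yourself, letting $c'\uparrow c$ would need left-continuity in $c$ of a supremum of probabilities, and you do not have that.

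To obtain the statement exactly as written, you need a separate argument for the event $\{\sup_{[0,T]}|M|=c\}$ on which the level is not reached on $D$. That is, you must produce step integrands whose terminal value reaches $c$ on most of this event, and the proposal gives none.

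For what the lemma is actually used for in the paper, the version with $c'<c$ is enough. Those uses are c\`adl\`ag paths of the integral process and identifying the convergence in Definition~\ref{de.RM-pred_integrability}(b) with convergence in the metric $d$. Both follow because $E[X\wedge1]\le c+P(X>c)$ and $P(|Y|\ge c')\le E[|Y|\wedge1]/c'$ for $c,c'\in(0,1]$.
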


\begin{proof}
	Fix $\Gamma\in\mathcal S_{\rm prd}^{1}$. Refining partitions if necessary, write
	\[
	\Gamma\Xi
	=
	\sum_{i=0}^{n-1}\eta_i\,\1_{(t_i,t_{i+1}]}\1_{A_i},
	\]
	where $0=t_0<\cdots<t_n=T$, $\eta_i$ is $\F_{t_i}$-measurable, and disjoint sets $A_i\in\Borel_b(\O)$. Set
	$Z_i:=\eta_i\Theta((t_i,t_{i+1}]\times A_i)$ and
	$S_k:=\sum_{i=0}^{k-1}Z_i$ for $k=1,\dots,n$, with $S_0:=0$.
	For $c>0$, define
	$B_i:=\left\{\sup_{k=0,\dots,i}\abs{S_k}<c\right\}$ for $i=0,\dots,n-1$.
	Then $B_i\in\F_{t_i}$. Hence
	\[
	\widetilde\Gamma
	:=
	\sum_{i=0}^{n-1}\Gamma\,\1_{B_i}\1_{(t_i,t_{i+1}]}\1_{A_i}
	\]
	belongs to $\mathcal S_{\rm prd}^{1}$. If $\rho:=\inf\{k=1,\dots,n:\abs{S_k}\ge c\}$, then on $\{\rho<\infty\}$ we have
	$
	\sum_{i=0}^{n-1}\1_{B_i}Z_i=S_\rho.
	$
	Consequently,
	\[
	\left\{\sup_{t\in[0,T]}\abs{\int_{(0,t]\times\O}\Gamma\Xi\,{\rm d}\Theta}\ge c\right\}
	\subseteq
	\left\{\abs{\int_{(0,T]\times\O}\widetilde\Gamma\Xi\,{\rm d}\Theta}\ge c\right\}.
	\]
	Taking probabilities and  the supremum over $\Gamma\in\mathcal S_{\rm prd}^{1}$ completes the proof.
\end{proof}

\begin{remark}
	A random measure is a mapping
	$
	X\colon\Borel_b([0,T]\times\O)\to L_P^0(\Omega;\R)
	$
	which satisfies Part~(a) of Definition~\ref{de.random-measure-inf} for $\RR=\Borel_b([0,\infty)\times \O)$ and Part (a) of Definition~\ref{de.Levy-space-time}. For such a random measure $X$ and every $\Gamma\in\S_{\rm prd}^1$, the stochastic integral can be defined  by the same finite-sum
	construction as in \eqref{eq.def-int}. This yields the associated integral process
	\[
	(\Gamma\cdot X)_t
	:=
	\int_{(0,t]\times\O}\Gamma\,\d X
	=
	\int_{(0,T]\times\O}\1_{(0,t]}(s)\Gamma(s,r)\, X(\d s,\d r),
	\qquad t\in (0,T].
	\]
	For random measures $X$ and $Y$, define
	\[
	d(X,Y)
	:=
	\sup_{\Gamma\in\S_{\rm prd}^1}
	E\left[
	\sup_{t\in( 0,T]}
	\left|
	(\Gamma\cdot(X-Y))_t
	\right|
	\wedge1
	\right].
	\]
	This functional can be viewed as a space-time analogue of the \'Emery metric on the space of real-valued semimartingales introduced in \cite{Emery}. Indeed, the random measures are indexed by subsets of $[0,T]\times\O$, while the bounded predictable space-time processes in $\S_{\rm prd}^1$ play the role of bounded predictable integrands in the classical \'Emery topology.
	
	Now let $\Theta\colon\Borel_b([0,T]\times\O)\to L_P^0(\Omega;\R)$
	be a L\'evy basis and let $\Xi\in\S_{\rm prd}$. Associated with the integral $I(\Xi)$ defined in \eqref{eq.def-int} is the random measure
	\[
	I(\Xi)(B)
	:=
	\int_{(0,T]\times\O} \1_{B} \Xi\,\d\Theta,
	\qquad
	B\in\Borel_b([0,T]\times\O).
	\]
   Its existence will follow from Theorem \ref{th.space-predictable-integrands} below.
	For every $\Gamma\in\S_{\rm prd}^1$, we have
	\[
	(\Gamma\cdot I(\Xi))_t
	=
	\int_{(0,t]\times\O}\Gamma(s,r)\Xi(s,r)\,\Theta(\d s,\d r),
	\qquad t\in[0,T].
	\]
	Consequently, for all $\Xi_1,\Xi_2\in\S_{\rm prd}$,
	\[
	d\big(I(\Xi_1),I(\Xi_2)\big)
	=
	\sup_{\Gamma\in\S_{\rm prd}^1}
	E\left[
	\sup_{t\in(0,T]}
	\left|
	\int_{(0,t]\times\O}
	\Gamma \big(\Xi_1-\Xi_2\big)
	\,\d\Theta
	\right|
	\wedge1
	\right].
	\]
	Hence, by Lemma \ref{le.Emery-ucp},  the convergence required in Part~(b) of Definition~\ref{de.RM-pred_integrability} is precisely convergence with respect to the \'Emery-type metric $d$ on the space of random measures. 
\end{remark}

Our approach to stochastic integration relies on decoupling inequalities. The following result will be fundamental in what follows.
\begin{proposition} \label{pro.RM-dec_tan_seq}
	Let $\Theta\colon \Borel_b(\Rp\times \O) \to L_P^0(\Omega;\R)$ be a L\'evy basis, let
	$0=t_0<  t_1< \cdots < t_n=T$	be a partition of $[0,T]$, let $\psi_k$ be an $\F_{t_{k-1}}$-measurable real-valued random variable, and let $A_k\in\Borel_b(\O)$ for $k=1,\dots,n$.	
	Define 
	\[
	\widetilde{\Theta}\colon \Borel_b(\Rp\times \O) \to  L^0_{P \otimes P'}(\Omega \times \Omega'; \R), 
	\qquad 
	\widetilde{\Theta}(D)(\omega,\omega'):=\Theta(D)(\omega').
	\]
	Then $\widetilde{\Theta}$ is a L\'evy basis, and the sequence 
	\[
	\Big(\psi_k \widetilde{\Theta}((t_{k-1},t_k]\times A_k)\Big)_{k=1,\dots,n}
	\]
	defined on
	\[
	\big(\Omega \times \Omega',\mathcal{F} \otimes \mathcal{F}',P \otimes P',(\mathcal{F}_{t_k} \otimes \mathcal{F}'_{t_k})_{k=0,\dots,n}\big)
	\]
	is a decoupled tangent sequence to the sequence
	\[
	\Big(\psi_k \Theta((t_{k-1},t_k]\times A_k)\Big)_{k=1,\dots,n}
	\]
	defined on $\big(\Omega,\mathcal{F},P,(\mathcal{F}_{t_k})_{k=0,\dots,n}\big)$.
\end{proposition}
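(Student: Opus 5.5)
The plan is to verify the two claims separately, modelling the argument on Example~\ref{ex.decoupling-independent}. Throughout, $\widetilde\Theta$ is considered with respect to the product filtration $(\F_t\otimes\F'_t)_{t\ge0}$.

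\textbf{$\widetilde{\Theta}$ is a L\'evy basis.} Since $\widetilde\Theta(D)$ depends only on $\omega'$ and $P\otimes P'$ has marginal $P'=P$ on $\Omega'$, the finite-dimensional laws of $\widetilde\Theta$ coincide with those of $\Theta$. Countable additivity a.s., independent scattering, infinite divisibility and Condition (c) of Definition~\ref{de.Levy-space-time} therefore transfer directly; for additivity, note that a $P'$-null exceptional set $N'$ gives the $P\otimes P'$-null set $\Omega\times N'$.

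For Condition (a), if $A\subseteq[0,t]\times\O$ then $\Theta(A)$ is $\F'_t$-measurable on $\Omega'$, so $\widetilde\Theta(A)$ is $\F_t\otimes\F'_t$-measurable.

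For Condition (b), let $A\subseteq(t,\infty)\times\O$. We must show that $\widetilde\Theta(A)$ is independent of $\F_t\otimes\F'_t$. It suffices to check this on the $\pi$-system of rectangles $F\times F'$ with $F\in\F_t$ and $F'\in\F'_t$. For bounded measurable $g$,
\[
E_{P\otimes P'}\big[\1_F\1_{F'}\,g(\widetilde\Theta(A))\big]
=P(F)\,E_{P'}\big[\1_{F'}\,g(\Theta(A))\big]
=P(F)\,P'(F')\,E_{P'}\big[g(\Theta(A))\big],
\]
where the second equality uses independence of $\Theta(A)$ from $\F'_t$ on the copy. This is the required factorisation.

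\textbf{Decoupled tangent property.} Write $\xi_k:=\Theta((t_{k-1},t_k]\times A_k)$ and $\xi_k':=\widetilde\Theta((t_{k-1},t_k]\times A_k)$, and set $X_k:=\psi_k\xi_k$, $Y_k:=\psi_k\xi'_k$. Both sequences are adapted to $\mathcal G_k:=\F_{t_k}\otimes\F'_{t_k}$, and $X_k$ is adapted to $\F_{t_k}$.

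Condition (a) of Definition~\ref{de.decoupled}: for fixed $\omega$, we have $Y_k(\omega,\cdot)=\psi_k(\omega)\,\Theta((t_{k-1},t_k]\times A_k)(\cdot)$. These variables are independent on $\Omega'$, because the sets $(t_{k-1},t_k]\times A_k$ are disjoint and $\Theta$ is independently scattered.

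Condition (b) of Definition~\ref{de.decoupled}: fix bounded measurable $f$. As in Example~\ref{ex.decoupling-independent}, $\psi_k$ is $\mathcal G_{k-1}$-measurable. The key facts are that $\xi_k$ is independent of $\mathcal G_{k-1}$ under $P\otimes P'$, and so is $\xi_k'$ by the L\'evy basis property of $\widetilde\Theta$ just proved. For $\xi_k$ this follows from the rectangle computation:
\[
E\big[\1_F\1_{F'}\,g(\xi_k)\big]=E_P\big[\1_F\,g(\xi_k)\big]\,P'(F')=P(F)\,E\big[g(\xi_k)\big]\,P'(F'),
\]
using Definition~\ref{de.Levy-space-time}(b) for $\Theta$. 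The freezing lemma then gives
\[
E\big[f(X_k)\mid\mathcal G_{k-1}\big]=\int_\R f(\psi_k\beta)\,\mathcal L(\xi_k)(\d\beta),
\]
and the same formula holds for $Y_k$ with $\mathcal L(\xi'_k)=\mathcal L(\xi_k)$. The two conditional laws therefore agree a.s.

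\textbf{Main obstacle.} The only delicate point is bookkeeping: Definition~\ref{de.decoupled} requires the tangent conditions with respect to the enlarged $\sigma$-algebras $\F_{t_{k-1}}\otimes\F'_{t_{k-1}}$, not $\F_{t_{k-1}}$ alone. Independence from these product $\sigma$-algebras must be obtained by the rectangle/$\pi$-system argument above rather than quoted from Definition~\ref{de.Levy-space-time}(b) directly; that argument is the step I would write out carefully.
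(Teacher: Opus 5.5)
Your proposal is correct and follows the paper's proof. Equality of the finite-dimensional laws shows that $\widetilde\Theta$ is a L\'evy basis, and the tangent property is the computation of Example~\ref{ex.decoupling-independent} applied to $\xi_k=\Theta((t_{k-1},t_k]\times A_k)$ and its copy $\xi_k'$. Your explicit rectangle argument for the independence of $\xi_k$ and $\xi_k'$ from $\F_{t_{k-1}}\otimes\F'_{t_{k-1}}$ fills in a step the paper leaves implicit. This matters because Example~\ref{ex.decoupling-independent} is stated for the natural filtration of the $\xi_k$, not the larger filtration $(\F_{t_k})$ used here.
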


\begin{proof}
	Since, for every finite family $D_1,\dots,D_m\in\Borel_b(\Rp\times\O)$, the random vectors
	$(\widetilde{\Theta}(D_1),\dots,\widetilde{\Theta}(D_m))$ and $(\Theta(D_1),\dots,\Theta(D_m))$
	have the same distribution, it follows that $\widetilde{\Theta}$ is an independently scattered infinitely divisible random measure. Moreover, the defining properties of a L\'evy basis are inherited from $\Theta$ by construction.

	Now set
	\[
	\xi_k:=\Theta((t_{k-1},t_k]\times A_k),
	\qquad
	\xi_k':=\widetilde{\Theta}((t_{k-1},t_k]\times A_k),
	\qquad k=1,\dots,n.
	\]
	Since $\Theta$ is independently scattered, the sequence $(\xi_k)_{k=1,\dots,n}$ consists of independent random variables. Moreover, $(\xi_k')_{k=1,\dots,n}$ is an independent copy of $(\xi_k)_{k=1,\dots,n}$.
	Since $\psi_k$ is $\F_{t_{k-1}}$-measurable for each $k=1,\dots,n$, Example \ref{ex.decoupling-independent} shows that
	\[
	\big(\psi_k \xi_k'\big)_{k=1,\dots,n}
	=
	\Big(\psi_k \widetilde{\Theta}((t_{k-1},t_k]\times A_k)\Big)_{k=1,\dots,n}
	\]
	is a decoupled tangent sequence to
	\[
	\big(\psi_k \xi_k\big)_{k=1,\dots,n}
	=
	\Big(\psi_k \Theta((t_{k-1},t_k]\times A_k)\Big)_{k=1,\dots,n}. \qedhere 
	\]
\end{proof}

 \section{Characterisation of predictable integrands}\label{se.characterisation-integrands}

 For a given L\'evy basis $\Theta\colon \Borel_b(\Rp\times \O) \to L_P^0(\Omega;\R)$, let $(\I_\Theta,\norm{\cdot}_{\Theta})$ be the topological vector space of deterministic integrands, which is separable by Theorem~\ref{th.RM-space}. We define $L_P^0(\Omega; \I_\Theta)$ as the space of equivalence classes of Borel-measurable mappings $\Xi:\Omega \rightarrow \I_\Theta$, endowed with the $F$-norm
 \[
 \normm{\Xi}_\Theta :=E\left[\norm{\Xi}_\Theta\wedge 1\right] \quad \text{for }\Xi \in L_P^0(\Omega; \I_\Theta).
 \]
 With this $F$-norm, $L_P^0(\Omega; \I_\Theta)$ is complete. 
 
The main result of this section is the following characterisation of the space of predictable integrands which are integrable with respect to a given L\'evy basis. 
\begin{theorem}\label{th.space-predictable-integrands}
	Let $\Theta\colon \Borel_b(\Rp\times \O)\to L_P^0(\Omega;\R)$ be a L\'evy basis. A predictable process
	$\Xi\colon\Omega\times[0,T]\times\O\to\R$ is integrable with respect to $\Theta$ if and only if the mapping
	$\omega\mapsto \Xi(\omega,\cdot,\cdot)$ belongs to $L_P^0(\Omega;\I_\Theta)$.
\end{theorem}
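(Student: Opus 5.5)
The plan is to compare the Émery-type functional in Part~(b) of Definition~\ref{de.RM-pred_integrability} with the $F$-norm $\normm{\cdot}_\Theta$ on $L_P^0(\Omega;\I_\Theta)$, using the decoupling inequalities. The central intermediate claim concerns predictable step processes. There are constants, or more precisely moduli of continuity, relating
\[
\rho(\Xi):=\sup_{\Gamma\in\S^1_{\rm prd}} E\Big[\Big|\int\Gamma\Xi\,\d\Theta\Big|\wedge 1\Big]
\qquad\text{and}\qquad \normm{\Xi}_\Theta
\]
such that, for a sequence $(\Xi_k)\subseteq\S_{\rm prd}$, we have $\rho(\Xi_k)\to0$ if and only if $\normm{\Xi_k}_\Theta\to0$.

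To prove this, take $\Xi\in\S_{\rm prd}$ and $\Gamma\in\S^1_{\rm prd}$, and refine partitions so that $\Gamma\Xi=\sum\psi_k\1_{(t_{k-1},t_k]}\1_{A_k}$. Proposition~\ref{pro.RM-dec_tan_seq} gives the decoupled tangent sequence $\psi_k\widetilde\Theta((t_{k-1},t_k]\times A_k)$. For fixed $\omega$, its sum is $\int (\Gamma\Xi)(\omega)\,\d\Theta(\omega')$, which is a deterministic integral in $\omega'$. By Fubini and Theorem~\ref{th.decoupling}(a),
\[
E_P\Big[\Big|\int\Gamma\Xi\,\d\Theta\Big|\wedge1\Big]\le c_1 E_P\Big[E_{P'}\Big[\Big|\int(\Gamma\Xi)(\omega)\,\d\Theta\Big|\wedge 1\Big]\Big].
\]
The quantitative part of the proof of Proposition~\ref{pro.RM-cont_of_int_op} ((a)$\Rightarrow$(b)) bounds the inner expectation by $h(\iota_\Theta(\Xi(\omega)))$, where $h(x)=(x+x^2)^{1/2}+x$, and it uses $\iota_\Theta(\Gamma\Xi)\le\iota_\Theta(\Xi)$ since $|\Gamma|\le1$. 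Taking the minimum with $1$, using $\iota\to0\Leftrightarrow\|\cdot\|_\Theta\to0$ from Theorem~\ref{th.RM-space}, and applying dominated convergence, we get that $\normm{\Xi_k}_\Theta\to0$ implies $\rho(\Xi_k)\to0$.

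The converse is the main obstacle, because it requires a uniform-in-$\omega$ statement. Theorem~\ref{th.decoupling}(b) with signs $\epsilon_k$ absorbed into $\Gamma$ (note $\epsilon\Gamma\in\S^1_{\rm prd}$) gives
\[
E_P\big[g_\Gamma(\omega)\big]\le c_2\rho(\Xi),
\qquad
g_\Gamma(\omega):=E_{P'}\Big[\Big|\int(\Gamma\Xi)(\omega)\,\d\Theta\Big|\wedge1\Big].
\]
What we need instead is $E_P[\sup_{\gamma\in\S^1_{\rm det}} g_\gamma(\omega)]$ small, so that Proposition~\ref{pro.RM-cont_of_int_op}((b)$\Rightarrow$(a)) can be applied pathwise. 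I would handle this with a measurable selection. For step $\Xi$ everything is finite-dimensional: the supremum over $\gamma$ piecewise constant on the fixed partition cells with values in $[-1,1]$ is attained by a measurable choice $\gamma(\omega)$, which can be taken $\F_{t_{k-1}}$-measurable on each time cell, so that $\Gamma(\omega):=\gamma(\omega)$ is predictable. Refining the spatial partition approximates the supremum over all of $\S^1_{\rm det}$. Then $E_P[\sup_\gamma g_\gamma\wedge1]\le c_2\rho(\Xi)$. A contradiction argument along a subsequence converging in probability, combined with Proposition~\ref{pro.RM-cont_of_int_op}, then yields $\|\Xi_k(\omega)\|_\Theta\to0$ in probability, hence $\normm{\Xi_k}_\Theta\to0$. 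The delicate point is the predictability of the selector: the supremum must be taken over $\gamma$ depending only on the cell coefficients $\xi_j$ with $t_j\le t_{k-1}$, not on future ones. If this fails, I would instead use that the pathwise functional $\iota_\Theta$ is additive over time cells and choose the drift sign cellwise, as in Lemma~\ref{le.RM-drift-equality}.

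Given the equivalence, the theorem follows by density and completeness. If $\Xi$ is integrable, the approximating $\Xi_k$ are $\normm{\cdot}_\Theta$-Cauchy. By completeness of $L_P^0(\Omega;\I_\Theta)$ and the continuous embedding into $L^0_\chi$ from Theorem~\ref{th.RM-space}, the limit coincides with the $P\otimes\chi$-a.e.\ limit $\Xi$, so $\Xi\in L^0_P(\Omega;\I_\Theta)$. Conversely, take $\Xi\in L^0_P(\Omega;\I_\Theta)$ predictable. We need predictable step processes $\Xi_k\to\Xi$ in $\normm{\cdot}_\Theta$ and a.e. First truncate to bounded $\Xi$ supported in $[0,T]\times$ a bounded set; this converges by dominated convergence and the $\Delta_2$ condition~\eqref{eq.Delta_2}. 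Then apply a monotone class argument: the class of bounded predictable processes that are $\normm{\cdot}_\Theta$-limits of elements of $\S_{\rm prd}$ is closed under bounded monotone convergence, again by dominated convergence for $\iota_\Theta$. Passing to a subsequence gives a.e.\ convergence, and the step-process equivalence then gives Part~(b) of Definition~\ref{de.RM-pred_integrability}.
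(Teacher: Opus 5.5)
\paragraph{Where the argument breaks.} Your direction (a)$\Rightarrow$(b) for step processes, the approximation by step processes, and the final completeness argument are sound and follow the paper (Proposition~\ref{pro.RM-small-if-small}, Lemma~\ref{le.approximation}). The gap is in the converse. You assert that a maximiser of $\gamma\mapsto g_\gamma(\omega)$ can be chosen $\F_{t_{k-1}}$-measurable on each time cell, and this is false in general. Because $x\mapsto\abs{x}\wedge 1$ is not additive over cells, the optimal weight on an early cell depends on the size of the contributions of later cells. Their coefficients are only measurable with respect to later $\sigma$-algebras. Concretely, let $\Theta$ be Gaussian on $\Rp\times\O_1$ and a deterministic drift on $\Rp\times\O_2$, and put $D_0=(t_0,t_1]\times A_0$ and $D_1=(t_1,t_2]\times A_1$ with $A_i\subseteq\O_{i+1}$. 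Take $\Xi=\1_{D_0}+\xi_1\1_{D_1}$ with $\xi_1=c\,\1_{\{\Theta(D_0)>0\}}$ and $\abs{c\,a_0(D_1)}\ge 1$. On $\{\xi_1\neq 0\}$, every maximiser must vanish on $D_0$, since only then is $g_\gamma(\omega)=1$. On $\{\xi_1=0\}$, every maximiser must have modulus $1$ there. So the maximiser on $D_0$ is not $\F_{t_0}$-measurable, and the resulting $\Gamma$ is anticipating. Neither Proposition~\ref{pro.RM-dec_tan_seq} nor Theorem~\ref{th.decoupling}(b) then gives $E_P[\sup_\gamma g_\gamma]\le c_2\rho(\Xi)$, because $\rho$ only involves predictable $\Gamma$. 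Refining the spatial partition does not help.

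\paragraph{How to repair it.} Your fallback is the right repair, and it is exactly the paper's route. It has to be carried out rather than invoked, and it changes the structure: you can no longer apply Proposition~\ref{pro.RM-cont_of_int_op}((b)$\Rightarrow$(a)) pathwise, since its hypothesis is a supremum over all $\gamma$. Instead, split $\iota_\Theta$ into its $\zeta_\Theta$- and $\eta_\Theta$-parts.
\begin{itemize}
\item The $\zeta_\Theta$-part needs no supremum. Your bound $E_P[g_\Gamma]\le c_2\rho(\Xi_k)$ with $\Gamma\equiv 1$ shows that, along sub-subsequences and for $P$-a.e.\ $\omega$, the deterministic integrals of $\Xi_k(\omega)$ tend to $0$ in $P'$-probability. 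Jacod--Shiryaev, Remark~VII.2.10, then gives $\int\zeta_\Theta(\cdot,\Xi_k(\omega))\,\d\chi\to 0$.
\item For the drift part, the supremum is of an integral functional and is attained pointwise. The maximiser is $\Lambda(\omega,t,x)=\pi(t,x,\Xi_k(\omega,t,x))$, with $\pi$ from the proof of Lemma~\ref{le.RM-drift-equality}. This is a pointwise $[-1,1]$-valued choice in $(t,x)$, not a sign per cell, because $a_\Theta(r,\cdot)$ varies with $r$ and need not be monotone. It is predictable precisely because it depends only on the current value of $\Xi_k$.
\end{itemize}
To use $\Lambda$ you still need two further steps. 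First, approximate $\Lambda$ by elements of $\S^1_{\rm prd}$, using the pathwise continuity from Proposition~\ref{pro.RM-cont_of_int_op}, so that the decoupling bound covers $\Lambda\Xi_k$. Second, use the continuity of the first characteristic of infinitely divisible laws under convergence in probability (Jacod--Shiryaev, Theorem~VII.2.9, or the quantitative form ``$P(\abs{Z}>\sqrt\delta)<\sqrt\delta$ implies $\abs{a_Z}<\epsilon$'' with Chebyshev and Fubini, as in the paper). This yields $\int a_\Theta(\cdot,\Lambda\Xi_k)\,\d\chi=\int\eta_\Theta(\cdot,\abs{\Xi_k})\,\d\chi\to 0$ in $P$-probability, where the equality uses that $a_\Theta(r,\cdot)$ is odd. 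Only then do you get $\iota_\Theta(\Xi_k)\to 0$ in probability, and Theorem~\ref{th.RM-space} finishes.
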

 This section is devoted to the proof of this result. The first result we require is standard and provides a suitable approximation of predictable integrands.  
 \begin{lemma}\label{le.approximation}
 	Let $\Theta\colon \Borel_b(\R_+\times \O)\to L_P^0(\Omega;\R)$ be a L\'evy basis with control measure $\chi$. 
 	For every predictable representative $\Xi$ with $\omega\mapsto\Xi(\omega,\cdot)\in L_P^0(\Omega;\I_\Theta)$, there exists a sequence
 	$(\Xi_k)_{k\in\N}\subseteq \mathcal S_{\rm prd}$ such that
 	$\normm{\Xi_k-\Xi}_\Theta\to 0$ and $\Xi_k\to \Xi$ 
 	$P\otimes\chi$-almost everywhere on $\Omega\times[0,T]\times\O$. 
 \end{lemma}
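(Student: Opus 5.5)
The approximation proceeds in three stages: truncation, a modular Lebesgue-type convergence, and a monotone class argument with a diagonal extraction. The convergence $\normm{\Xi_k-\Xi}_\Theta\to0$ will come from dominated convergence applied twice. First in $\I_\Theta$ for fixed $\omega$, using Theorem~\ref{th.RM-space} (modular convergence $\iota_\Theta\to0$ is equivalent to $F$-norm convergence). Then in $\omega$, using that $\norm{\cdot}_\Theta\wedge1$ is bounded. Almost everywhere convergence will be obtained by passing to a subsequence, via the continuous embedding $\I_\Theta\hookrightarrow L^0_\chi$ combined with a diagonal argument.

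\textbf{Step 1 (truncation).} For a predictable $\Xi$ with $\Xi(\omega,\cdot)\in\I_\Theta$ a.s., set
\[
\Xi^{(m)}:=\Xi\,\1_{\{\abs{\Xi}\le m\}}\1_{[0,T]\times(\O\cap[-m,m]^d)}.
\]
Pointwise $\abs{\Xi^{(m)}}\le\abs{\Xi}$ and $\Xi^{(m)}\to\Xi$. Since $\phi(r,\cdot)=\zeta_\Theta+\eta_\Theta$ is non-decreasing and continuous with $\phi(r,0)=0$, and the $\Delta_2$-bound gives $\phi(r,\abs{\Xi-\Xi^{(m)}})\le\phi(r,\abs{\Xi})$, dominated convergence yields $\iota_\Theta(\Xi(\omega)-\Xi^{(m)}(\omega))\to0$ for a.e.\ $\omega$. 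Hence $\norm{\Xi-\Xi^{(m)}}_\Theta\to0$ a.s., and bounded convergence gives $\normm{\Xi-\Xi^{(m)}}_\Theta\to0$. It therefore suffices to approximate bounded predictable $\Xi$ supported in $[0,T]\times K$ with $K\in\Borel_b(\O)$.

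\textbf{Step 2 (bounded case).} On such a support the local integrability established in the proof of Theorem~\ref{th.RM-space} gives
\[
\iota_\Theta(\Xi(\omega))\le C_m\int_{[0,T]\times K}\phi(r,1)\,\chi(\d r)<\infty
\]
uniformly in $\omega$. Consequently, for bounded $\Xi,\Xi'$ supported there, $\iota_\Theta(\Xi-\Xi')$ is controlled by integrals of $\phi(r,\abs{\Xi-\Xi'})$ against the finite measure $\phi(r,1)\chi(\d r)$ on $[0,T]\times K$. Let $\mathcal H$ be the class of bounded $\P\otimes\Borel(\O)$-measurable $\Xi$ supported in $[0,T]\times K$ that admit step approximations $\Xi_k\in\S_{\rm prd}$, uniformly bounded, with
\[
E\otimes\int\phi(r,\abs{\Xi_k-\Xi})\,\chi(\d r)\to0 .
\]
By the previous remark this implies $\normm{\Xi_k-\Xi}_\Theta\to0$. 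The class $\mathcal H$ is a vector space and contains indicators of $F\times(s,t]\times A$ with $F\in\F_s$ and $A\subseteq K$; these sets form a $\pi$-system generating $\P\otimes\Borel(\O)$ restricted to $[0,T]\times K$. Closure under bounded monotone limits follows from dominated convergence for the finite measure $P\otimes\phi(\cdot,1)\chi$, using $\phi(r,\alpha)\le C\phi(r,1)$ for bounded $\alpha$ and continuity of $\phi(r,\cdot)$ at $0$. The functional monotone class theorem then gives that $\mathcal H$ contains all bounded predictable functions on the support.

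\textbf{Step 3 (a.e.\ convergence and diagonalisation).} From the approximating sequence in Step 2, convergence in $P\otimes\phi(\cdot,1)\chi$-measure holds, so a subsequence converges $P\otimes\chi$-a.e. on the set where $\phi(r,1)>0$. The set where $\phi(r,1)=0$ has $\chi$-measure zero, since $\phi(r,1)$ dominates the density of $\chi$ with respect to itself up to a constant: $\abs a+q+\int(\abs\beta^2\wedge1)\kappa=1$ $\chi$-a.e. Combining Steps 1 and 2 with a diagonal choice, together with a summable selection $\normm{\Xi_k-\Xi}_\Theta\le2^{-k}$ followed by Borel--Cantelli, gives both modes of convergence. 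The main obstacle is the monotone-class step: one must verify that modular closeness survives monotone limits and that the generating rectangles are genuinely in $\S_{\rm prd}$, i.e.\ with bounded $A\in\Borel_b(\O)$ and left-open time intervals. The time-zero sets $F\times\{0\}$ are $\chi$-null by condition (c) of Definition~\ref{de.Levy-space-time}, which I would check carefully.
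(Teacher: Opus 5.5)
Your proof is correct, but it is organised differently from the paper's.

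\emph{The paper's route.} It applies the functional monotone class theorem to the cone $H$ of bounded non-negative predictable $Y$ for which there are step processes with $0\le \Xi_k\uparrow Y$ $P\otimes\chi$-a.e. Approximation from below supplies, for free, both the a.e.\ convergence and the domination $\abs{\Xi_k-\Xi}\le 2\abs{\Xi}$. So the paper truncates only in the values, needs no spatial localisation, and gets $\iota_\Theta(\Xi_k-\Xi)\to 0$ for a.e.\ $\omega$ directly from $\iota_\Theta(\Xi(\omega))<\infty$.

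\emph{Your route.} You work with a genuine vector space $\mathcal H$ defined by $E[\iota_\Theta(\Xi_k-\Xi)]\to 0$, where closure under sums and differences comes from the $\Delta_2$-condition. The price is the localisation to $[0,T]\times K$, so that $P\otimes\phi(\cdot,1)\chi$ is a finite dominating measure, plus a separate extraction of a.e.\ convergence.

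\emph{What you gain.} The monotone class theorem actually applies to your class. The paper's $H$ is not closed under differences, and its conclusion fails as stated. Take $\chi$ to be Lebesgue measure (Gaussian white noise) and $C\subseteq(0,T]$ a fat Cantor set. Each interval $(t_j,t_{j+1}]$ meets $C^c$ in a set of positive length, so every non-negative step process below $\1_{\Omega\times C\times B}$ vanishes a.e. Hence this indicator is not in $H$. Your route is the sound one for this step.

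\emph{Points to tighten.}
\begin{enumerate}
\item Convergence in $P\otimes\phi(\cdot,1)\chi$-measure, and the strict positivity of $\phi(r,\cdot)$ on $(0,\infty)$ needed for a.e.\ convergence, both follow from two facts. First, $\phi(r,\epsilon)\ge 5^{-n}\phi(r,1)$ whenever $2^{-n}\le\epsilon$, by $\Delta_2$. Second, $\phi(r,1)\ge \abs{a(r)}+q(r)+\int_{\R}(\abs{\beta}^2\wedge 1)\,\kappa(r,\d\beta)=1$ for $\chi$-a.a.\ $r$, using $a_\Theta(r,1)=a(r)$. State both explicitly.
\item Your Borel--Cantelli remark works, and it deserves one line of justification: $\norm{f}_\Theta\le c<1$ implies $\iota_\Theta(f)\le c$. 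A selection with $\normm{\Xi_k-\Xi}_\Theta\le 2^{-k}$ therefore gives, a.s., $\iota_\Theta(\Xi_k(\omega)-\Xi(\omega))\le 2^{-k/2}$ eventually, which is summable. So the a.e.\ part of the lemma is a consequence of the $\normm{\cdot}_\Theta$ part.
\item The ``uniformly bounded'' clause in $\mathcal H$ is not preserved by the diagonal choice in the monotone-limit step. Either drop it, since it is never used, or truncate the step processes at $\sup\abs{\Xi}$. Truncation keeps them in $\S_{\rm prd}$ and does not increase $\abs{\Xi_k-\Xi}$.
\item The $\chi$-nullity of $\{0\}\times K$, which you need for the constant $\1$ and the time-zero generators, is also assumed implicitly by the paper. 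Condition (c) as literally stated only forces $\Theta$ to be deterministic on subsets of $\{t\}\times([-n,n]^d\cap\O)$, so $\abs{a_0}$, and hence $\chi$, could still charge them. Read (c) as $\Theta(\{t\}\times B)=0$ for all $B\in\Borel_b(\O)$.
\end{enumerate}
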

 
 \begin{proof}
 	The $\sigma$-algebra $\P\otimes\Borel(\O)$ on $\Omega\times[0,T]\times\O$ is generated by the $\pi$-system
 	\begin{align*}
 		\mathcal C
 		&:=
 		\big\{
 		A\times (s,t]\times B :
 		A\in\F_s,\,
 		0\le s<t\le T,\,
 		B\in\Borel_b(\O)
 		\big\}\\
 		&\qquad \cup
 		\big\{
 		A\times \{0\}\times B :
 		A\in\F_0,\,
 		B\in\Borel_b(\O)
 		\big\}.
 	\end{align*}
 	Let $H$ be the class of all bounded, non-negative predictable processes
 	$Y\colon\Omega\times[0,T]\times\O\to\Rp$ for which there exists a sequence
 	$(\Xi_k)_{k\in\N}\subseteq\mathcal S_{\rm prd}$ such that
 	$0\le \Xi_k\uparrow Y $ $P\otimes\chi$-almost everywhere.
 	Then $H$ contains the indicators of all sets in $\mathcal C$, is closed under
 	positive linear combinations, and is closed under bounded increasing limits.
 	Hence, by the functional monotone class theorem, $H$ contains every bounded,
 	non-negative $\P\otimes\Borel(\O)$-measurable function.
 	
 	Let first $\Xi$ be bounded. Applying the previous paragraph to $\Xi^+$ and
 	$\Xi^-$ yields sequences $(\Xi_k^+)$ and $(\Xi_k^-)$ in $\mathcal S_{\rm prd}$
 	such that $	0\le \Xi_k^\pm\uparrow \Xi^\pm$  $P\otimes\chi$-almost everywhere.
 	Define $\Xi_k:=\Xi_k^+-\Xi_k^-$. Then $\Xi_k\to\Xi$ $P\otimes\chi$-almost everywhere and
 	$|\Xi_k-\Xi|\le 2|\Xi|$.
 	Since
 	$\alpha\mapsto\zeta_\Theta(t,x,\alpha)+\eta_\Theta(t,x,\alpha)$ is
 	continuous and non-decreasing on $\R_+$ for every $(t,x)\in [0,T]\times \O$ and $\iota_\Theta$ satisfies the $\Delta_2$-condition \eqref{eq.Delta_2}, Lebesgue's dominated convergence
 	theorem and Fubini's theorem imply  $\iota_\Theta(\Xi_k-\Xi)\to0$ $P$-almost surely.	By Theorem~\ref{th.RM-space},
 	$\norm{\Xi_k-\Xi}_\Theta\to 0$ $P$-almost surely, and hence in probability, which proves the claim for bounded $\Xi$.
 	
 	For a general predictable $\Xi\in L_P^0(\Omega;\I_\Theta)$, define
 	$\Xi^{(n)}:= \Xi\1_{\{|\Xi|\le n\}}$ for $n\in\N$.
 	Then each $\Xi^{(n)}$ is bounded and predictable,
 	$\Xi^{(n)}\to\Xi $ $P\otimes\chi$-almost everywhere,
 	and $|\Xi^{(n)}-\Xi|\le |\Xi|$.
 	As above, dominated convergence yields $\normm{\Xi^{(n)}-\Xi}_\Theta\to0$.
 	For every $n\in\N$, choose 
 	$(\Xi_{n,m})_{m\in\N}\subseteq \mathcal S_{\rm prd}$ such that
 	$\normm{\Xi_{n,m}-\Xi^{(n)}}_\Theta\to 0$   and
 	$ \Xi_{n,m}\to \Xi^{(n)}$ $P\otimes\chi$-almost everywhere.
 	A standard diagonal argument yields a sequence in
 	$\mathcal S_{\rm prd}$ converging to $\Xi$ both in
 	$\normm{\cdot}_\Theta$ and
 	$P\otimes\chi$-almost everywhere.
 \end{proof}
 
 The following result is the analogue for predictable integrands of Proposition~\ref{pro.RM-cont_of_int_op} for deterministic integrands. In the next section, we will strengthen this result and obtain a strong continuity property of the integral operator.
 \begin{proposition} \label{pro.RM-small-if-small}
 	Let $\Theta\colon \Borel_b(\Rp\times \O) \to L_P^0(\Omega;\R)$ be a L\'evy basis and $(\Xi_k)_{k\in{\mathbb N}}$ a sequence in $\mathcal{S}_{\rm prd}$. Then the following are 
 	equivalent:
 	\begin{enumerate}
 		\item[{\rm (a)}] $\displaystyle \lim_{k\to\infty}E[\norm{\Xi_k}_\Theta\wedge 1]=0$;
 		\item[{\rm (b)}] $\displaystyle \lim_{k\to\infty} \displaystyle\sup_{\Gamma \in \mathcal{S}_{{\rm prd}}^{1}}E\Bigg[\abs{\int_{(0,T]\times\O} \Gamma \Xi_k \;{\rm d}\Theta}\wedge 1 \Bigg]=0$. 
 	\end{enumerate}
 \end{proposition}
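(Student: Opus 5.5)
The plan is to reduce both implications to the deterministic statement, Proposition~\ref{pro.RM-cont_of_int_op}, by means of the decoupling inequalities of Theorem~\ref{th.decoupling} together with the decoupled tangent sequence of Proposition~\ref{pro.RM-dec_tan_seq}.

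Fix $\Xi\in\S_{\rm prd}$ and $\Gamma\in\S_{\rm prd}^1$. After refining partitions we write
\[
\Gamma\Xi=\sum_{k=1}^n\psi_k\1_{(t_{k-1},t_k]}\1_{A_k},
\]
with the $A_k$ in each time slab chosen from a common finite disjoint family, so that the sum runs over disjoint space-time rectangles, and with each $\psi_k$ being $\F_{t_{k-1}}$-measurable. Set $X_k:=\psi_k\Theta((t_{k-1},t_k]\times A_k)$ and $Y_k:=\psi_k\widetilde\Theta((t_{k-1},t_k]\times A_k)$. Where several rectangles share a time interval, I order them lexicographically and use the same $\sigma$-algebra for all of them; the tangent property still holds, because the increments within a slab are jointly independent of $\F_{t_{k-1}}$. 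Proposition~\ref{pro.RM-dec_tan_seq} says $(Y_k)$ is a decoupled tangent sequence to $(X_k)$. For fixed $\omega$, the sum $\sum_k Y_k(\omega,\cdot)$ is the deterministic integral $I_{\widetilde\Theta}\big((\Gamma\Xi)(\omega)\big)$ on $\Omega'$, and $\widetilde\Theta$ has the same characteristics as $\Theta$. Fubini's theorem therefore gives
\[
E_{P\otimes P'}\Big[\Big|\sum_k Y_k\Big|\wedge1\Big]=E_P\Big[g\big(\Gamma(\omega)\Xi(\omega)\big)\Big],\qquad g(f):=E\big[|I_\Theta(f)|\wedge1\big].
\]

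(a)$\Rightarrow$(b). Theorem~\ref{th.decoupling}(a) bounds $E[|I(\Gamma\Xi_k)|\wedge1]$ by $c_1E_P[g(\Gamma(\omega)\Xi_k(\omega))]$. For fixed $\omega$, the function $\Gamma(\omega)$ lies in $\S_{\rm det}^1$. The proof of Proposition~\ref{pro.RM-cont_of_int_op} yields the quantitative bound
\[
\sup_{\gamma\in\S^1_{\rm det}}g(\gamma f)\le h\big(\iota_\Theta(f)\big),\qquad h(x)=(x+x^2)^{1/2}+x.
\]
Hence the expectation is at most $E[h(\iota_\Theta(\Xi_k))\wedge1]$, uniformly in $\Gamma$. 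By Theorem~\ref{th.RM-space}, $\norm{\Xi_k}_\Theta\to0$ in probability is equivalent to $\iota_\Theta(\Xi_k)\to0$ in probability. This gives (b) by bounded convergence.

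(b)$\Rightarrow$(a). Theorem~\ref{th.decoupling}(b), applied with the sign-modified integrands $\epsilon_k\Gamma$ (still in $\S^1_{\rm prd}$), gives
\[
\sup_\Gamma E_P\big[g(\Gamma(\omega)\Xi_k(\omega))\big]\le c_2\sup_{\Gamma}E\big[|I(\Gamma\Xi_k)|\wedge1\big]\to0.
\]
The main obstacle is converting this into $\iota_\Theta(\Xi_k(\omega))\to0$ in probability. Proposition~\ref{pro.RM-cont_of_int_op} needs a supremum over $\gamma$ taken \emph{inside} the $\omega$-expectation, and the maximizing $\gamma$ depends on $\omega$.

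To handle this, I first take $\Gamma\equiv1$ on the support. This gives $E_P[g(\Xi_k(\omega))]\to0$, so $g(\Xi_k(\omega))\to0$ in probability. The Jacod--Shiryaev argument then gives $\int\zeta_\Theta(r,|\Xi_k|)\,\d\chi\to0$ in probability. For the drift part I would construct a predictable selector: take $\gamma=\pi(r,\Xi_k(\omega,r))$ as in Lemma~\ref{le.RM-drift-equality}, which is measurable in $(\omega,r)$ and hence predictable, and approximate it by elements of $\S^1_{\rm prd}$. Since $\Xi_k$ is a step process with finitely many values on finitely many sets, $\pi$ takes the form $\pi(r,\xi_j)$ on each set, and the approximation should be manageable. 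This yields $E_P[g(\Gamma_k\Xi_k)]\to0$ with $\int a_\Theta(r,\Gamma_k\Xi_k)\,\d\chi=\int\eta_\Theta(r,|\Xi_k|)\,\d\chi$ up to small error.

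Passing to subsequences converging almost surely, I apply the weak-convergence characterization (Jacod--Shiryaev, Th.~VII.2.9) pathwise. This shows $\int\eta_\Theta(r,|\Xi_k|)\,\d\chi\to0$ almost surely along the subsequence. Together with the $\zeta_\Theta$ part, $\iota_\Theta(\Xi_k)\to0$ in probability, and Theorem~\ref{th.RM-space} then gives (a).
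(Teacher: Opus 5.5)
Your proposal is correct and follows the paper's proof: decoupling (a) with the deterministic bound for (a)$\Rightarrow$(b), and decoupling (b) with sign flips, Jacod--Shiryaev for the $\zeta_\Theta$-part and the selector $\pi$ approximated in $\S^1_{\rm prd}$ for the $\eta_\Theta$-part of (b)$\Rightarrow$(a); the only variation is in the drift step, where you combine a $k$-dependent approximant $\Gamma_k$ with Th.~VII.2.9 along almost surely convergent subsequences, whereas the paper uses a uniform $\epsilon$--$\delta$ criterion with Markov's inequality, and both are valid because they rest on the bound being uniform in $\Gamma$. One small correction: rectangles sharing a time slab cannot literally share a single $\sigma$-algebra in Definition~\ref{de.decoupled}, so either treat the whole slab as one summand or use $\F_{t_{k-1}}\vee\sigma\big(\Theta(D_{k,1}),\dots,\Theta(D_{k,j})\big)$, which works by the joint independence you invoke.
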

 
 \begin{proof}
 	To prove (a) $\Rightarrow$ (b), let $\epsilon>0$ be fixed. Proposition \ref{pro.RM-cont_of_int_op} enables us to choose 		$\delta>0$ such that for every $\psi\in\mathcal{S}_{\rm det}$ we have 
 	the implication:
 	\begin{align}\label{eq.RM-det-small-if-small}
 		\norm{\psi}_\Theta \le \delta \;\Rightarrow\; 
 		\sup_{\gamma \in \mathcal{S}^{1}_{\rm det}}P\left(\abs{\int_{(0,T]\times\O} \gamma \psi  \;{\rm d}\Theta}>\epsilon\right)\le 
 		\epsilon. 	
 	\end{align}
 	Choose $k_0\in {\mathbb N}$ such that the set 
 	$A_k:=\left\{\omega \in \Omega: \norm{\Xi_k(\omega)}_{\Theta}\le \delta\right\}$
 	satisfies $P(A_k)\geq 1-\epsilon$ for all $k\ge k_0$. By recalling the definition of $\widetilde{\Theta}$ and $(\Omega',\mathcal{F}',P')$ from Proposition \ref{pro.RM-dec_tan_seq}, implication 
 	\eqref{eq.RM-det-small-if-small} implies for all $\omega\in A_k$ and  $k\geq k_0$ that 
 	\[
 	\sup_{\Gamma \in \mathcal{S}_{{\rm prd}}^{1}}
 	P'\left(\omega'\in \Omega':\abs{\left(\int_{(0,T]\times\O} \Gamma(\omega)\Xi_k(\omega) \;{\rm d}\widetilde{\Theta}(\omega,\cdot)\right)(\omega^\prime)}> \epsilon \right)\leq\epsilon.\]
 	Fubini's theorem implies for all $k\geq k_0$ and $\Gamma \in \mathcal{S}_{{\rm prd}}^{1}$ that
 	\begin{align*}
 		&(P\otimes P')\left((\omega, \omega')\in \Omega\times \Omega'\colon   \abs{\left(\int_{(0,T]\times\O} \Gamma \Xi_k \;{\rm d}\widetilde{\Theta}\right)(\omega,\omega')}> \epsilon\right)\\
 		&=\int_{\Omega} P'\left(\omega'\in \Omega':  \abs{\left(\int_{(0,T]\times\O} \Gamma(\omega) \Xi_k(\omega) \;{\rm d}\widetilde{\Theta}(\omega,\cdot)\right)(\omega')}> \epsilon\right)\,P({\rm d}\omega)
 		\leq 2\epsilon
 	\end{align*}
 	As $\epsilon>0$ is arbitrary, we obtain
 	\begin{equation}\label{eq.RM-limit_sup}
 		\lim_{k \rightarrow \infty}\sup_{\Gamma \in \mathcal{S}_{\rm prd}^{1}} E_{P \otimes P'} \Bigg[\abs{\int_{(0,T]\times\O} \Gamma \Xi_k \;{\rm d}\widetilde{\Theta}} \wedge 1\Bigg] = 0.
 	\end{equation}
 	For each $k \in \mathbb{N}$ and $\Gamma \in \mathcal{S}_{{\rm prd}}^{1}$ the integrand $\Gamma \Xi_k$ lies  in ${\mathcal S}_{\rm prd}$ and has a representation of the form
 	\begin{align}\label{eq.presentiation-GammaS}
 		\Gamma\Xi_k= \sum_{\ell=0}^{n_k-1} \zeta_\ell^k\xi_\ell^k \mathbb{1}_{(t_\ell^k,t_{\ell+1}^k]} \1_{A_\ell^k},
 	\end{align}
 	where $0=t_0^k < \cdots <t_{n_k}^k= T$,   $\zeta_\ell^k\xi_\ell^k$ is an $\mathcal{F}_{t_\ell^k}$-measurable real-valued random variable, and $A_\ell^k\in \Borel_b(\O)$ for each $\ell = 0,...,n_k-1$ and $k\in\N$.
 	Proposition \ref{pro.RM-dec_tan_seq} guarantees for each $k \in \mathbb{N}$ that the sequence 
 	\[\Big(\zeta_\ell^k\xi_\ell^k \Theta\big( (t_\ell^k,t_{\ell+1}^k]\times A_\ell^k)\big)\Big)_{\ell=0,...,n_k-1}\]
 	has the decoupled tangent sequence
 	\[\Big(\zeta_\ell^k\xi_\ell^k \widetilde{\Theta}\big( (t_\ell^k,t_{\ell+1}^k]\times A_\ell^k)\big)\Big)_{\ell=0,...,n_k-1}. \]
 	We conclude from  Theorem \ref{th.decoupling} that there exists a constant $c>0$ such that, for all $k \in \mathbb{N}$ and $\Gamma \in \mathcal{S}_{{\rm prd}}^{1}$, we have
 	\begin{align*}
 		E_{P\otimes P'}\Bigg[\abs{\int_{(0,T]\times\O} \Gamma \Xi_k \;{\rm d}\Theta}\wedge 1\Bigg]
 		&= E_{P\otimes P'}\Bigg[\abs{\sum_{\ell=1}^{n_k-1} \zeta_\ell^k\xi_\ell^k \Theta((t_{\ell}^k,t_{\ell+1}^k]\times A_\ell^k) )}\wedge 1\Bigg] \nonumber\\
 		& \leq c E_{P\otimes P'}\Bigg[\abs{\sum_{\ell=1}^{n_k-1} \zeta_\ell^k\xi_\ell^k \widetilde{\Theta}((t_\ell^k,t_{\ell+1}^k]\times A_\ell^k)} \wedge 1\Bigg]\nonumber \\
 		&=c E_{P\otimes P'}\Bigg[\abs{\int_{(0,T]\times\O} \Gamma \Xi_k \;{\rm d}\widetilde{\Theta}}\wedge 1\Bigg].
 	\end{align*}
 	We conclude from  \eqref{eq.RM-limit_sup} that 
 	\begin{align*} 
 	 &	\lim_{k \rightarrow \infty}\sup_{\Gamma \in \mathcal{S}_{{\rm prd}}^{1}} E_{P}\Bigg[\abs{\int_{(0,T]\times\O} \Gamma \Xi_k \;{\rm d}\Theta}\wedge 1\Bigg]\\
 	 &\qquad\qquad = \lim_{k \rightarrow \infty}\sup_{\Gamma \in \mathcal{S}_{{\rm prd}}^{1}} E_{P\otimes P'}\Bigg[\abs{\int_{(0,T]\times\O} \Gamma \Xi_k \;{\rm d}\Theta}\wedge 1\Bigg]=0,
 	\end{align*}
 	which shows (b).

 	For establishing (b) $\Rightarrow$ (a), we assume that $\Xi_k$ is of the form 
 	\begin{align*}
 		\Xi_k=\sum_{\ell=0}^{n_k-1} \xi_\ell^k \mathbb{1}_{(t_\ell^k,t_{\ell+1}^k]}\1_{A_\ell^k},
 	\end{align*}
 	where $0=t_0^k\leq \cdots <t_{n_k}^k= T$,   $\xi_\ell^k$ is an $\mathcal{F}_{t_\ell^k}$-measurable real-valued random variable, and $A_{\ell}^k\in \Borel_b(\O)$ for $\ell=0,\dots, n_k-1$ and $k\in\N$. For every choice of signs $(\epsilon_\ell)_{\ell=0}^{n_k-1}\in\{\pm1\}^{n_k}$, the process
 	\[
 	\Gamma_\epsilon
 	:=
 	\sum_{\ell=0}^{n_k-1}\epsilon_\ell
 	\1_{(t_\ell^k,t_{\ell+1}^k]}\1_{A_\ell^k}
 	\]
 	belongs to $\mathcal S_{\rm prd}^{1}$. Hence, Theorem \ref{th.decoupling}  implies that there exists a constant $c>0$ such that
 	\begin{align}
 		E_{P \otimes P'}\Bigg[\abs{\int_{(0,T]\times\O} \Xi_k \;{\rm d}\widetilde{\Theta}}\wedge 1\Bigg] 
 		&= E_{P \otimes P'}\Bigg[\abs{\sum_{\ell=0}^{n_k-1}\xi_\ell^k\widetilde{\Theta}((t_{\ell}^k,t_{\ell+1}^k]\times A_\ell^k)}\wedge 1\Bigg] \nonumber\\
 		&\le c \max_{\epsilon_\ell \in \{\pm1\}} E_{P}\Bigg[\abs{\sum_{\ell=0}^{n_k-1}\epsilon_\ell\xi_\ell^k \Theta((t_{\ell}^k,t_{\ell+1}^k]\times A_\ell^k)}\wedge 1\Bigg] \nonumber\\
 		&\le c \sup_{\Gamma \in \mathcal{S}_{{\rm prd}}^{1}} E_{P}\Bigg[\abs{\int_{(0,T]\times\O} \Gamma \Xi_k \;{\rm d}\Theta}\wedge 1\Bigg].\label{main_proof_decoupl_ineq}
 	\end{align}
 	The hypothesis in Part (b) implies that $(\int_{(0,T]\times\O} \Xi_k\,  \d \widetilde{\Theta})_{k\in\N}$ converges to $0$ in 
 	probability. It follows that for every subsequence  of $(\Xi_k)_{k \in \mathbb{N}}$, there exists a further subsequence for which this convergence is $P\otimes P^\prime$-a.s. For ease of notation, we denote this sub-subsequence by  $(\Xi_k)_{k\in\N}$. Thus, there exists a set $N \subseteq \Omega \times \Omega'$ with $(P \otimes P')(N)=0$ satisfying
 	\[\lim_{k \rightarrow \infty}\Bigg(\int_{(0,T]\times\O} \Xi_{k}\;{\rm d}\widetilde{\Theta}\Bigg)(\omega,\omega')=0 \quad \text{for each}\; (\omega,\omega')\in N^c.\]
 	Define the section of the set $N$ for each $\omega \in \Omega$  by
 	\[N_{\omega}=\Bigg\{\omega' \in \Omega'\colon  \lim_{k\rightarrow \infty}\left(\int_{(0,T]\times\O} \Xi_{k}(\omega)\;{\rm d} \widetilde{\Theta}(\omega,\cdot)\right)(\omega')\neq 0\Bigg\},\]
 	where we note that since $\Xi_{k}$ are step processes, it holds that
 	\[\Bigg(\int_{(0,T]\times\O} \Xi_{k}\;{\rm d}\widetilde{\Theta}\Bigg)(\omega,\cdot)
 	=\int_{(0,T]\times\O} \Xi_{k}(\omega) \;{\rm d} \widetilde{\Theta}(\omega,\cdot)\qquad\text{for all }\omega\in\Omega .\]
 	Fubini's theorem implies $0=(P \otimes P')(N)= \int_{\Omega}P'(N_{\omega}){\rm d}P(\omega)$,  from which it follows that there exists  $\Omega_1 \subseteq \Omega$ with $P(\Omega_1)=1$ such that $P'(N_{\omega})=0$  for all $\omega \in \Omega_1$. In other words, for each fixed $\omega \in \Omega_1$, the  real-valued random variables
 	\[ \widetilde{I}(\Xi_{k}(\omega)):=\widetilde{I}(\Xi_{k}(\omega,\cdot,\cdot)):=\int_{(0,T]\times\O} \Xi_{k}(\omega,t,x) \; \widetilde{\Theta}(\omega,\cdot,\d t, \d x) \]
 	converges $P'$-a.s.\ to $0$ as $k\to\infty$ on $(\Omega',\mathcal{F}',P')$. Since $ \widetilde{I}(\Xi_{k}(\omega))$ is the sum of independent infinitely divisible random variables, it is also infinitely divisible. Using the notations introduced in \eqref{eq.RD-multiplied-1} and \eqref{eq.RD-multiplied-2}, its characteristics is given as 
 	\[\left(\sum_{\ell=0}^{n_k-1}  a_{D_\ell^k,\xi_\ell^k(\omega)}, \sum_{\ell=0}^{n_k-1} q_{D_\ell^k,\xi_\ell^k(\omega)},\sum_{\ell=0}^{n_k-1}   \lambda_{D_\ell^k, \xi_\ell^k(\omega)}\right),\]
 	where $D_\ell^k:=(t_\ell^k, t_{\ell+1}^k]\times A_\ell^k$.
 	Since $\widetilde{I}(\Xi_{k}(\omega))\to 0$ in $L_{P^\prime}^0(\Omega^\prime;\R)$ for all $\omega\in \Omega_1$, it follows from 
     	Remark VII.2.10 in \cite{JacodShiryaev} as in \eqref{eq.RM-aux1}	 that
 	\begin{align*}
 		\lim_{k\to\infty} 	\int_{(0,T]\times\O} \zeta_{\widetilde{\Theta}(\omega,\cdot)}(t,x, \Xi_{k}(\omega,t,x))\, \chi(\d t,\d x) 
 		&=0.
 	\end{align*}
 	Since  $\widetilde{\Theta}(\omega, \cdot)$ has the same characteristics as $\Theta$ for each $\omega \in \Omega$, we obtain for all $\omega \in \Omega_1$ that
 	\begin{align*}
 		\lim_{k\to\infty} 	\int_{(0,T]\times\O} \zeta_{\Theta}(t,x, \Xi_{k}(\omega,t,x))\, \chi(\d t,\d x) 
 		&=0.
 	\end{align*}
 	As  $P(\Omega_1)=1$ and $(\Xi_{k})_{k\in\N}$ was assumed to be a subsequence of an arbitrary subsequence, it follows, for all $\epsilon>0$, and the original sequence $(\Xi_k)_{k\in\N}$  that 
 	\begin{align}\label{eq.RM-small_k_main_proof}
 		\lim_{k \rightarrow \infty}P\left(\int_{(0,T]\times\O} \zeta_{\Theta}\big(t,x, \Xi_{k}(\cdot,t,x)\big)\, \chi(\d t, \d x)  >\epsilon\right)=0.
 	\end{align}
 	To finish the proof, it remains to show that for all $\epsilon>0$ we have
 	\begin{align}\label{eq.RM-small_l_main_proof}
 		\lim_{k \rightarrow \infty}P\left(\int_{(0,T]\times\O} \eta_{\Theta}\big(t,x, \abs{\Xi_{k}(\cdot,t,x)}\big)\, \chi(\d t, \d x)  >\epsilon\right)=0.
 	\end{align}
 	Let $\epsilon \in (0,1)$ be fixed. 
 	By \cite[Pro.~11.10]{Sato}, or equivalently by the continuity criterion for characteristics of infinitely divisible laws, there exists $\delta\in(0,\epsilon)$ such that every infinitely divisible random variable $Z$ with first characteristic $a_Z$ satisfies 
 	\[
 	P(|Z|>\sqrt{\delta})<\sqrt{\delta}
 	\quad\Longrightarrow\quad
 	|a_Z|<\epsilon .
 	\]
 	Applying this to the stochastic integral, we obtain  for all $f \in \I_\Theta$ that
 	\begin{align}\label{eq.implication_characteristic_small}
 		P\left(\abs{\int_{(0,T]\times\O} f \, \d \Theta} >\sqrt{\delta}\right)<\sqrt{\delta} \implies \abs{\int_{(0,T]\times\O}  a_{\Theta}(t,x,f(t,x))    \,\chi(\d t,\d x)}<\epsilon.
 	\end{align}
 	It follows from Equation \eqref{main_proof_decoupl_ineq} that there exists an $N \in \mathbb{N}$ such that for all $k \geq N$ we have
 	\begin{align}\label{eq.product_measure_goes_to_0}
 		\sup_{\Gamma \in \mathcal{S}_{{\rm prd}}^{1}}(P\otimes P')\left(\abs{\int_{(0,T]\times\O} \Gamma \Xi_k \, \d\widetilde{\Theta}}>\sqrt{\delta}\right)<\delta.
 	\end{align}
 	Chebyshev's inequality, Fubini's theorem and Equation \eqref{eq.product_measure_goes_to_0} imply for all $k \geq N$ and $\Gamma \in \mathcal{S}_{{\rm prd}}^{1}$ that
 	\begin{align}\label{eq.one_minus_delta_bound}
 		&P\left(\omega \in \Omega:P'\left(\omega' \in \Omega':\abs{\left(\int_{(0,T]\times\O} \Gamma(\omega) \Xi_k(\omega) \;{\rm d}\widetilde{\Theta}(\omega,\cdot)\right)(\omega')}>\sqrt{\delta}\right)<\sqrt{\delta}\right)\nonumber\\
 		&\geq 1 - \frac{1}{\sqrt{\delta}}\int_{\Omega}P'\left(\omega' \in \Omega':\abs{\left(\int_{(0,T]\times\O} \Gamma(\omega) \Xi_k(\omega) \;{\rm d}\widetilde{\Theta}(\omega,\cdot)\right)(\omega')}>\sqrt{\delta}\right)\,{\rm d}P(\omega)\nonumber\\
 		&= 1 - \frac{1}{\sqrt{\delta}}(P\otimes P')\left((\omega,\omega')\in \Omega \times \Omega':\abs{\left(\int_{(0,T]\times\O} \Gamma \Xi_k \;{\rm d}\widetilde{\Theta}\right)(\omega,\omega')}>\sqrt{\delta}\right)\nonumber\\
 		&\geq 1- \sqrt{\delta}.
 	\end{align}
 	Equations \eqref{eq.implication_characteristic_small} and \eqref{eq.one_minus_delta_bound} show for all $k \geq N$ and $\Gamma \in \mathcal{S}_{{\rm prd}}^{1}$ that
 	\begin{align*}
 		P\left(\abs{\int_{(0,T]\times\O} a_{\Theta} \big(t,x,\Gamma(t,x) \Xi_k(\cdot,t,x)\big)\, \chi(\d t,\d x)}<\epsilon\right)\geq 1-\sqrt{\delta},
 	\end{align*}
 	or equivalently, for all $k \geq N$ we have
 	\begin{align*}
 		\sup_{\Gamma \in \mathcal{S}_{{\rm prd}}^{1}}P\left(\abs{\int_{(0,T]\times\O} a_{\Theta} \big(t,x,\Gamma(t,x) \Xi_k(\cdot,t,x)\big)\, \chi(\d t,\d x)}\geq\epsilon\right)\leq \sqrt{\delta}.
 	\end{align*}
 	The above inequality, combined with an approximation argument using functions in $\mathcal{S}_{{\rm prd}}^{1}$ shows that for any predictable real-valued process $\Lambda\colon \Omega\times [0,T]\times\O\to \R$ uniformly bounded by 1 it holds that, for $k \geq N$, 
 	\begin{align}\label{eq.general_prdictable_probability}
 		P\left(\abs{\int_{(0,T]\times\O} a_{\Theta} \big(t,x,\Lambda(\cdot,t,x) \Xi_k(\cdot,t,x)\big)\, \chi(\d t,\d x)}\geq\epsilon\right)\leq \sqrt{\delta}.
 	\end{align} 
 	By applying this to $\Lambda(\omega,t,x)=\pi(t,x, \Xi_k(\omega,t,x))$, where $\pi$ is defined in the proof of Lemma \ref{le.RM-drift-equality}, it follows that 
 	\begin{align*}
 		P\left(\abs{\int_{(0,T]\times\O} \sup_{\abs{\beta}\le 1} a_{\Theta} \big(t,x,\beta \Xi_k(\cdot,t,x)\big)\, \chi(\d t,\d x)}\geq\epsilon\right)\leq \sqrt{\delta}.
 	\end{align*}
     Observing that  $\sup_{\abs{\beta}\le 1}a_\Theta(t,x,\beta \alpha)=\eta_\Theta(t,x,\abs{\alpha})$  since  $\beta\mapsto a_\Theta(t,x,\beta)$ is an odd function shows \eqref{eq.RM-small_l_main_proof}. Together with \eqref{eq.RM-small_k_main_proof}, this proves that $\iota_\Theta(\Xi_k)$ converges to zero in probability, and hence Part~(a) follows from Theorem~\ref{th.RM-space}.
 \end{proof}

 \begin{proof}[Proof of Theorem \ref{th.space-predictable-integrands}.]
 	Let $\Xi$ be a  predictable process integrable with respect to $\Theta$. Then there exists a sequence $(\Xi_k)_{k \in \mathbb{N}}$ of elements of $\mathcal{S}_{\rm prd}$ converging $P\otimes \chi$-a.e.\ to $\Xi$ and satisfying 
 	\[\displaystyle \lim_{k,\ell \rightarrow \infty}\sup_{\Gamma \in \mathcal{S}_{{\rm prd}}^{1}}E\Bigg[\abs{\int_{(0,T]\times\O} \Gamma(\Xi_k-\Xi_\ell) \;{\rm d}\Theta}\wedge1 \Bigg]=0.\]
 	Proposition \ref{pro.RM-small-if-small} implies that $\lim_{k,\ell \rightarrow \infty}\normm{\Xi_k-\Xi_\ell}_\Theta=0$. Completeness of the metric space $(L_P^0(\Omega; \I_\Theta),\normm{\cdot}_\Theta)$ and the fact that $(\Xi_k)_{k \in \mathbb{N}}$ converges $P\otimes \chi$-a.e.\ to $\Xi$ together yield that the sequence $(\Xi_k)_{k \in \mathbb{N}}$ has a limit in $L_P^0(\Omega; \I_\Theta)$ and that this limit necessarily coincides with $\Xi$. Thus $\Xi\in L_P^0(\Omega; \I_\Theta)$. 
 	
 	To establish the reverse inclusion, let $\Xi$ be a predictable process in the space $L_P^0(\Omega; \I_\Theta)$. By Lemma \ref{le.approximation}, there exists a sequence $(\Xi_k)_{k \in \mathbb{N}}$ of elements of $\mathcal{S}_{\rm prd}$ converging to $\Xi$ in $\normm{\cdot}_\Theta$ and $P\otimes\chi$-a.e. Proposition \ref{pro.RM-small-if-small} implies that 
 	\[\displaystyle \lim_{k,\ell \rightarrow \infty}\sup_{\Gamma \in \mathcal{S}_{{\rm prd}}^{1}}E\Bigg[\abs{\int_{(0,T]\times\O} \Gamma(\Xi_k-\Xi_\ell) \;{\rm d}\Theta}\wedge1 \Bigg]=0.\]
 	Thus, $\Xi$ satisfies the conditions of Definition \ref{de.RM-pred_integrability}.
 \end{proof}

\section{Limit theorems}\label{se.limit-theorems}

We begin this section by generalising Proposition \ref{pro.RM-small-if-small} from step processes to arbitrary sequences of integrable predictable processes. In this way, we obtain not only a powerful limit theorem, but also continuity of the integral operator.

 \begin{corollary} \label{co.RM-small-if-small}
 	Let $\Theta\colon \Borel_b(\Rp\times \O) \to L_P^0(\Omega;\R)$ be a L\'evy basis and $(\Xi_k)_{k\in{\mathbb N}}$ a sequence of $\Theta$-integrable predictable processes. Then the following are 
 	equivalent:
 	\begin{enumerate}
 		\item[{\rm (a)}] $\displaystyle \lim_{k\to\infty}E[\norm{\Xi_k}_\Theta\wedge 1]=0$;
 		\item[{\rm (b)}] $\displaystyle \lim_{k\to\infty} \displaystyle\sup_{\Gamma \in \mathcal{S}_{{\rm prd}}^{1}}E\Bigg[\abs{\int_{(0,T]\times \O} \Gamma \Xi_k \;{\rm d}\Theta}\wedge 1 \Bigg]=0$. 
 	\end{enumerate}
 \end{corollary}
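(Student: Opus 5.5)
The plan is to reduce Corollary \ref{co.RM-small-if-small} to Proposition \ref{pro.RM-small-if-small} by approximating each $\Xi_k$ with a step process that is close in both topologies, and then running a triangle-inequality argument. Fix $k$. By Theorem \ref{th.space-predictable-integrands}, $\Xi_k\in L_P^0(\Omega;\I_\Theta)$. Lemma \ref{le.approximation} then gives a sequence $(\Xi_{k,m})_m\subseteq\S_{\rm prd}$ with $\normm{\Xi_{k,m}-\Xi_k}_\Theta\to0$ and $\Xi_{k,m}\to\Xi_k$ $P\otimes\chi$-a.e. This sequence is Cauchy in $\normm{\cdot}_\Theta$, so Proposition \ref{pro.RM-small-if-small} (applied to differences) shows that it is an admissible approximating sequence in Definition \ref{de.RM-pred_integrability}.

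Next I need that, for every $\Gamma\in\S^1_{\rm prd}$, the product $\Gamma\Xi_k$ is integrable with
\[
\int\Gamma\Xi_k\,\d\Theta=\lim_m\int\Gamma\Xi_{k,m}\,\d\Theta
\]
in probability, with the convergence uniform in $\Gamma$. For this, set
\[
\rho(\Xi):=\sup_{\Gamma\in\S^1_{\rm prd}}E\Big[\Big|\int\Gamma\Xi\,\d\Theta\Big|\wedge1\Big].
\]
Since $\Gamma\Gamma'\in\S^1_{\rm prd}$, the sequence $\Gamma\Xi_{k,m}$ satisfies Definition \ref{de.RM-pred_integrability}(b), and it converges a.e. to $\Gamma\Xi_k$. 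Hence $\int\Gamma\Xi_k\,\d\Theta$ is the limit in probability. Passing to the limit $\ell\to\infty$ in $E[|\int\Gamma(\Xi_{k,m}-\Xi_{k,\ell})\,\d\Theta|\wedge1]$ then gives
\[
\sup_\Gamma E\Big[\Big|\int\Gamma(\Xi_k-\Xi_{k,m})\,\d\Theta\Big|\wedge 1\Big]\le\limsup_\ell\rho(\Xi_{k,m}-\Xi_{k,\ell})\longrightarrow0 \quad (m\to\infty).
\]

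With this, choose $m_k$ such that
\[
\normm{\Xi_{k,m_k}-\Xi_k}_\Theta\le 1/k
\qquad\text{and}\qquad
\sup_\Gamma E\Big[\Big|\int\Gamma(\Xi_k-\Xi_{k,m_k})\,\d\Theta\Big|\wedge1\Big]\le1/k,
\]
and set $\Psi_k:=\Xi_{k,m_k}\in\S_{\rm prd}$. The functionals $\normm{\cdot}_\Theta$ and $E[|\cdot|\wedge1]$ satisfy the triangle inequality (since $(x+y)\wedge1\le x\wedge1+y\wedge1$), and the supremum over $\Gamma$ preserves it. Hence (a) for $(\Xi_k)$ is equivalent to (a) for $(\Psi_k)$, and (b) for $(\Xi_k)$ is equivalent to (b) for $(\Psi_k)$. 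Proposition \ref{pro.RM-small-if-small} gives the equivalence (a)$\Leftrightarrow$(b) for $(\Psi_k)$, which completes the proof.

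I expect the main obstacle to be the second step: justifying that $\int\Gamma\Xi_k\,\d\Theta$ is well defined for step $\Gamma$ and is approximated uniformly in $\Gamma$. The point to check is that the limit is independent of the approximating sequence, which the paper asserts after Definition \ref{de.RM-pred_integrability}, and that the $\limsup_\ell$ bound is legitimate, which follows from Fatou's lemma or continuity of $E[|\cdot|\wedge1]$ under convergence in probability.
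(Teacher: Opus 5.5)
Your proposal is correct and takes essentially the paper's route: you approximate each $\Xi_k$ by step processes via Lemma~\ref{le.approximation}, apply Proposition~\ref{pro.RM-small-if-small} to the step processes, and transfer the result using the triangle inequality for $\normm{\cdot}_\Theta$ and for the supremum functional. The only difference is organisational: you choose one diagonal sequence $\Psi_k$ that is close to $\Xi_k$ in both functionals and obtain the closeness of the integrals directly from the definition of the integral, whereas the paper proves (a)$\Rightarrow$(b) first and then uses it in the converse direction to control $\int\Gamma(\Xi_k-\Xi_k^\ell)\,\d\Theta$.
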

 \begin{proof}
 	(a) $\Rightarrow$ (b): Let $\epsilon>0$ be fixed. 	
 	Proposition \ref{pro.RM-small-if-small} implies that there exists a $\delta(\epsilon)>0$ such that for all $\Xi  \in \mathcal{S}_{\rm prd}$ we have the implication:
 	\begin{equation}\label{epsilon_delta_implication}
 		\normm{\Xi}_\Theta\le \delta(\epsilon) \quad \Rightarrow \quad \displaystyle\sup_{\Gamma \in \mathcal{S}_{{\rm prd}}^{1}}E\Bigg[\abs{\int_{(0,T]\times\O} \Gamma \Xi \;{\rm d}\Theta}\wedge 1 \Bigg]\le \epsilon.
 	\end{equation}
 	Since $\lim_{k\to\infty}\normm{\Xi_k}_\Theta=0$, there exists a $k_0 \in \mathbb{N}$ such that $\normm{\Xi_k}_\Theta<\tfrac{\delta(\epsilon)}{2}$ for all $k \geq k_0$. Furthermore, Lemma \ref{le.approximation} guarantees for each $k \in \mathbb{N}$ the existence of a sequence $(\Xi_k^\ell)_{\ell \in \mathbb{N}}\subseteq \mathcal{S}_{\rm prd}$ converging to $\Xi_k$ in $\normm{\cdot}_\Theta$ and $P\otimes \chi$-a.e. Consequently, for each $k \in \mathbb{N}$ we can find $\ell_0(k,\epsilon) \in \mathbb{N}$ such that for all $\ell \geq \ell_0(k,\epsilon)$ we have 
 	\[
 	\normmm{\Xi_k^\ell-\Xi_k}_\Theta\le \tfrac{\delta(\epsilon)}{2}.
 	\]
 	Therefore, for each $k\geq k_0$ and $\ell\geq \ell_0(k,\epsilon)$,
 	\[
 	\normmm{\Xi_k^\ell}_\Theta\leq \normmm{\Xi_k^\ell-\Xi_k}_\Theta+\normm{\Xi_k}_\Theta\le  \delta(\epsilon),
 	\]
 	which implies by \eqref{epsilon_delta_implication} that
 	\begin{equation}\label{eq_small_approx_int}
 		\displaystyle\sup_{\Gamma \in \mathcal{S}_{{\rm prd}}^{1}}E\Bigg[\abs{\int_{(0,T]\times\O} \Gamma \Xi_k^\ell \;{\rm d}\Theta}\wedge 1 \Bigg]\le \epsilon.
 	\end{equation}
 	Fix $k\geq k_0$. Since the stochastic integral is independent of the approximating sequence and
 	$\Xi_k^\ell \to \Xi_k$ in $\normm{\cdot}_\Theta$ and $P\otimes\chi$-a.e., it follows from \eqref{eq_small_approx_int} that
 	\begin{align*}
 		\displaystyle\sup_{\Gamma \in \mathcal{S}_{{\rm prd}}^{1}}E\Bigg[\abs{\int_{(0,T]\times\O} \Gamma \Xi_k \;{\rm d}\Theta}\wedge 1 \Bigg]
 		&= \sup_{\Gamma \in \mathcal{S}_{{\rm prd}}^{1}} \lim_{\ell \rightarrow \infty}E\Bigg[\abs{\int_{(0,T]\times\O} \Gamma \Xi_k^\ell \;{\rm d}\Theta}\wedge 1 \Bigg] \\
 		&\leq \limsup_{\ell \rightarrow \infty} \sup_{\Gamma \in \mathcal{S}_{{\rm prd}}^{1}}E\Bigg[\abs{\int_{(0,T]\times\O}  \Gamma \Xi_k^\ell \;{\rm d}\Theta}\wedge 1 \Bigg] \\
 		&\le  \epsilon.
 	\end{align*}
 	
 	(b) $\Rightarrow$ (a): Let $\epsilon>0$ be fixed. Proposition \ref{pro.RM-small-if-small} implies that there exists a $\delta(\epsilon)>0$ such that for all $\Xi \in \mathcal{S}_{\rm prd}$ we have the implication
 	\begin{equation}\label{reverse_epsilon_delta_implication}
 		\displaystyle\sup_{\Gamma \in \mathcal{S}_{{\rm prd}}^{1}}E\Bigg[\abs{\int_{(0,T]\times\O} \Gamma \Xi \;{\rm d}\Theta}\wedge 1 \Bigg]\le \delta(\epsilon) \quad \Rightarrow \quad \normm{\Xi}_\Theta\le\tfrac{\epsilon}{2}.
 	\end{equation}
 	By assumption, there exists a $k_0 \in \mathbb{N}$ such that for all $k \geq k_0$ we have
 	\begin{equation} \label{eq.small_integral}
 		\displaystyle\sup_{\Gamma \in \mathcal{S}_{{\rm prd}}^{1}}E\Bigg[\abs{\int_{(0,T]\times\O} \Gamma \Xi_k \;{\rm d}\Theta}\wedge 1 \Bigg]\le \tfrac{\delta(\epsilon)}{4}.
 	\end{equation}
 	Lemma \ref{le.approximation} guarantees that for each $k \in \mathbb{N}$ there exists a sequence $(\Xi_k^\ell)_{\ell \in \mathbb{N}}$ in $\mathcal{S}_{\rm prd}$ converging to $\Xi_k$ in $\normm{\cdot}_\Theta$ and $P\otimes \chi$-a.e. Consequently, for each $k \in \mathbb{N}$ we can find $\ell_0(k,\epsilon) \in \mathbb{N}$ such that for all $\ell \geq \ell_0(k,\epsilon)$ we have
 	\begin{equation} \label{eq.process_dif_is_small}
 		\normmm{\Xi_k^\ell-\Xi_k}_\Theta \le \epsilon/2.
 	\end{equation}
 	Since for each $k \in \mathbb{N}$ we have $\lim_{\ell \rightarrow \infty}\normm{\Xi_k^\ell-\Xi_k}_\Theta=0$, the implication $(a)\Rightarrow(b)$ already proved above implies that for each $k \in \mathbb{N}$ there exists an $\ell_1(k,\epsilon) \in \mathbb{N}$ such that for all $\ell \geq \ell_1(k,\epsilon)$,
 	\begin{align}\label{eq.integrals_close}
 		\sup_{\Gamma \in \mathcal{S}_{{\rm prd}}^{1}}E\Bigg[\abs{\int_{(0,T]\times \O} \Gamma (\Xi_k-\Xi_k^\ell) \;{\rm d}\Theta }\wedge 1 \Bigg]\le \tfrac{\delta(\epsilon)}{2}.
 	\end{align}
 	By combining \eqref{eq.small_integral} and \eqref{eq.integrals_close}, we obtain for all $k\geq k_0$ and $\ell \geq \max\{\ell_0(k,\epsilon),\ell_1(k,\epsilon)\}$ that
 	\[
 	\sup_{\Gamma \in \mathcal{S}_{{\rm prd}}^{1}}E\Bigg[\abs{\int_{(0,T]\times\O} \Gamma \Xi_k^\ell \;{\rm d}\Theta}\wedge 1 \Bigg]\le \delta(\epsilon),
 	\]
 	which implies by \eqref{reverse_epsilon_delta_implication} and \eqref{eq.process_dif_is_small} that
 	\[
 	\normm{\Xi_k}_\Theta\leq \normmm{\Xi_k-\Xi_k^\ell}_\Theta+\normmm{\Xi_k^\ell}_\Theta\le \epsilon.
 	\]
 	As $\epsilon>0$ was arbitrary, this concludes the proof.
 \end{proof}
 
Corollary \ref{co.RM-small-if-small}, together with Lemma \ref{le.Emery-ucp}, shows that the integral operator $I_\Theta$ is continuous from $L_P^0(\Omega;\I_\Theta)$ equipped with the $F$-norm $\normm{\cdot}_\Theta$ into the space of c\`adl\`ag processes equipped with the Emery-type topology induced by the seminorm
\[
\Xi\mapsto
\sup_{\Gamma\in\mathcal S_{\rm prd}^{1}}
E\Bigg[
\sup_{t\in[0,T]}
\Big|
\int_{(0,t]\times\O}\Gamma\Xi\,{\rm d}\Theta
\Big|
\wedge1
\Bigg].
\]

We finish this section with a stochastic dominated convergence theorem.
\begin{theorem} \label{stoch_dom_conv}
	Let $\Theta\colon \Borel_b(\Rp\times \O) \to L_P^0(\Omega;\R)$ be a L\'evy basis and $(\Xi_k)_{k\in{\mathbb N}}$ a sequence of  predictable processes $\Xi_k\colon\Omega\times [0,T]\times\O\to\R$ satisfying:
	\begin{enumerate}[\rm (a)]
		\item \label{thm_cnd1'} $(\Xi_k)_{k \in \mathbb{N}}$ converges $P\otimes \chi$-a.e.\ to a predictable process $\Xi$;
		\item \label{thm_cnd2'} there exists a predictable, $\Theta$-integrable process $\Upsilon$ such that, for all $k \in \mathbb{N}$,
		\[
		(\zeta_\Theta+\eta_\Theta)(t,x,\abs{\Xi_k(\omega,t,x)})
		\leq
		(\zeta_\Theta+\eta_\Theta)(t,x,\abs{\Upsilon(\omega,t,x)}),
		\]
		for $P\otimes \chi$-a.a.\ $(\omega,t,x)\in \Omega\times [0,T]\times \O$.
	\end{enumerate}
	Then $\Xi$ is $\Theta$-integrable and
	\[
	\lim_{k\to\infty} \sup_{\Gamma \in \mathcal{S}_{{\rm prd}}^{1}}
	E\Bigg[\abs{\int_{(0,T]\times \O} \Gamma (\Xi_k-\Xi) \;{\rm d}\Theta}\wedge 1 \Bigg]=0.
	\]
\end{theorem}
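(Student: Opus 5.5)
The plan is to reduce everything to Corollary \ref{co.RM-small-if-small}. By that corollary, the claimed limit is equivalent to $E[\norm{\Xi_k-\Xi}_\Theta\wedge 1]\to 0$, provided that $\Xi$ and each $\Xi_k$ (hence $\Xi_k-\Xi$) are $\Theta$-integrable. By Theorem \ref{th.space-predictable-integrands}, integrability of a predictable process is the same as $\omega\mapsto$ (its section) lying in $L_P^0(\Omega;\I_\Theta)$, i.e.\ $\iota_\Theta(\Xi(\omega))<\infty$ for $P$-a.a.\ $\omega$. Furthermore, by Theorem \ref{th.RM-space}, $\norm{f_k}_\Theta\to0$ iff $\iota_\Theta(f_k)\to0$. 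So I will aim to show: for $P$-a.a.\ $\omega$, $\iota_\Theta(\Xi_k(\omega)-\Xi(\omega))\to 0$. Almost sure convergence of $\norm{\Xi_k-\Xi}_\Theta$ to $0$ then gives convergence of $E[\,\cdot\wedge1]$ by bounded convergence.

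Write $\phi(t,x,\alpha):=(\zeta_\Theta+\eta_\Theta)(t,x,\alpha)$, which is continuous and non-decreasing in $\alpha\ge0$ and satisfies $\phi(t,x,2\alpha)\le 5\phi(t,x,\alpha)$ by \eqref{eq.Mus1-rewrite}. Since $\Upsilon$ is integrable, Theorem \ref{th.space-predictable-integrands} yields $\Omega_0$ with $P(\Omega_0)=1$ and $\iota_\Theta(\Upsilon(\omega))<\infty$ for $\omega\in\Omega_0$. By Fubini, after shrinking $\Omega_0$, for each $\omega\in\Omega_0$ two things hold for $\chi$-a.a.\ $(t,x)$: first, $\Xi_k(\omega,t,x)\to\Xi(\omega,t,x)$; second, $\phi(t,x,\abs{\Xi_k})\le\phi(t,x,\abs{\Upsilon})$ for all $k$ (countably many null sets). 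Passing to the limit using continuity of $\phi$ in $\alpha$ gives $\phi(t,x,\abs{\Xi})\le \phi(t,x,\abs{\Upsilon})$. Hence $\iota_\Theta(\Xi(\omega))\le\iota_\Theta(\Upsilon(\omega))<\infty$, so $\Xi$ is integrable, and likewise each $\Xi_k$ is integrable.

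For the convergence, fix $\omega\in\Omega_0$. Monotonicity gives $\abs{\Xi_k-\Xi}\le 2\max(\abs{\Xi_k},\abs{\Xi})$. Then
\[
\phi(t,x,\abs{\Xi_k-\Xi})\le \phi(t,x,2\max(\abs{\Xi_k},\abs{\Xi}))\le 5\big(\phi(t,x,\abs{\Xi_k})+\phi(t,x,\abs{\Xi})\big)\le 10\,\phi(t,x,\abs{\Upsilon}),
\]
which is $\chi$-integrable. Pointwise, $\phi(t,x,\abs{\Xi_k-\Xi})\to\phi(t,x,0)=0$ by continuity at $0$: $\zeta_\Theta(\cdot,0)=0$ and $\eta_\Theta(\cdot,0)=\abs{a_\Theta(\cdot,0)}=0$. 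Lebesgue's dominated convergence theorem then gives $\iota_\Theta(\Xi_k(\omega)-\Xi(\omega))\to0$, and we conclude as described above.

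The main obstacle is the subtle point that the hypothesis in (b) bounds $\phi(\abs{\Xi_k})$ rather than $\abs{\Xi_k}$ itself. Since $\phi(t,x,\cdot)$ need not be strictly increasing, no pointwise bound $\abs{\Xi_k}\le\abs{\Upsilon}$ is available. The argument above avoids this by using only monotonicity and the $\Delta_2$ inequality on the maximum, which works regardless. A secondary check is measurability and that $\iota_\Theta$ evaluated at the sections is well defined, but this follows from predictability and Fubini.
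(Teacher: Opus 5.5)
Your proposal is correct and follows essentially the same route as the paper: Fubini to pass to $\omega$-wise statements, the $\Delta_2$-bound to dominate $(\zeta_\Theta+\eta_\Theta)(\cdot,\abs{\Xi_k-\Xi})$ by a multiple of $(\zeta_\Theta+\eta_\Theta)(\cdot,\abs{\Upsilon})$, dominated convergence for $\iota_\Theta$, Theorem~\ref{th.space-predictable-integrands}, bounded convergence, and finally Corollary~\ref{co.RM-small-if-small}. The only minor deviations are these: you obtain $\Xi(\omega)\in\I_\Theta$ directly by passing the domination inequality to the limit via continuity of $\alpha\mapsto(\zeta_\Theta+\eta_\Theta)(t,x,\alpha)$, instead of the paper's Cauchy-sequence-plus-completeness argument; and you explicitly record that each $\Xi_k$ is integrable, which the paper leaves implicit when invoking the corollary.
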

\begin{proof}
	By assumption, there exists a set $N \subseteq \Omega \times [0,T]\times \O$ with $P\otimes\chi (N)=0$ such that
	\[
	\lim_{k \rightarrow \infty} \Xi_k(\omega,t,x)=\Xi(\omega,t,x)
	\]
	and
	\[
	(\zeta_\Theta+\eta_\Theta)(t,x,\abs{\Xi_k(\omega,t,x)})
	\leq
	(\zeta_\Theta+\eta_\Theta)(t,x,\abs{\Upsilon(\omega,t,x)})
	\]
	for all $(\omega,t,x)\in N^c$ and $k \in \mathbb{N}$. Fubini's theorem implies that there exists a set $\Omega_1\subseteq \Omega$ with $P(\Omega_1)=1$ such that for each $\omega \in \Omega_1$ we have
	\[
	(\zeta_\Theta+\eta_\Theta)(t,x,\abs{\Xi_k(\omega,t,x)})
	\leq
	(\zeta_\Theta+\eta_\Theta)(t,x,\abs{\Upsilon(\omega,t,x)})
	\]
	and
	\[
	\lim_{k\rightarrow \infty} \Xi_k(\omega,t,x)=\Xi(\omega,t,x)
	\]
	for $\chi$-a.a.\ $(t,x) \in [0,T]\times \O$. Monotonicity of $\zeta_\Theta+\eta_\Theta$ and the $\Delta_2$-condition \eqref{eq.Delta_2} yield
	\[
	(\zeta_\Theta+\eta_\Theta)(t,x,\abs{\Xi_k(\omega,t,x)-\Xi_\ell(\omega,t,x)})
	\leq
	5\,(\zeta_\Theta+\eta_\Theta)(t,x,\abs{\Upsilon(\omega,t,x)}).
	\]
	Furthermore, Theorem \ref{th.space-predictable-integrands} guarantees that there exists a set $\Omega_2 \subseteq \Omega$ with $P(\Omega_2)=1$ such that
	\[
	\iota_\Theta(\Upsilon(\omega,\cdot))<\infty
	\qquad \text{for all $\omega \in \Omega_2$.}
	\]
	Since $\zeta_\Theta$ and $\eta_\Theta$ are continuous, another application of Lebesgue's dominated convergence theorem implies that for all $\omega \in \Omega_1 \cap \Omega_2$,
	\begin{align*}
		&\lim_{k,\ell \rightarrow \infty}\iota_\Theta(\Xi_k(\omega,\cdot)-\Xi_\ell(\omega,\cdot))\\
		&\qquad = \lim_{k,\ell \rightarrow \infty}\Bigg( \int_{(0,T]\times\O} \Big(\zeta_\Theta(t,x,\abs{\Xi_k(\omega,t,x)-\Xi_\ell(\omega,t,x)}) \Big. \Bigg.\\
		&\Bigg.\Big.\qquad\qquad\qquad\qquad\qquad\quad
		+\eta_\Theta(t,x,\abs{\Xi_k(\omega,t,x)-\Xi_\ell(\omega,t,x)})\Big)\,\chi(\d t,\d x)\Bigg)\\
		&\qquad =0.
	\end{align*}
	Hence, for each $\omega \in \Omega_1 \cap \Omega_2$, the sequence $(\Xi_k(\omega))_{k \in \mathbb{N}}$ is Cauchy in $(\I_\Theta,\norm{\cdot}_{\Theta})$. Since $\Xi_k(\omega)\rightarrow \Xi(\omega)$ for $\chi$-a.a.\ $(t,x) \in [0,T]\times\O$, it follows that $\Xi(\omega)\in \I_\Theta$. As $P(\Omega_1 \cap \Omega_2)=1$, Theorem \ref{th.space-predictable-integrands} implies that $\Xi$ is $\Theta$-integrable. Another application of Lebesgue's dominated convergence theorem yields $\lim_{k \rightarrow \infty}\normm{\Xi_k-\Xi}_\Theta=0$,	and the assertion now follows from Corollary \ref{co.RM-small-if-small}.
\end{proof}

{\bf Acknowledgement.} I thank Gergely Bodó for valuable comments on an earlier version of this paper.

\end{document}